\newtheorem{theorem}{Theorem}[section]
\newtheorem{lemma}[theorem]{Lemma}
\newtheorem{proposition}[theorem]{Proposition}
\newtheorem{remark}[theorem]{Remark}
\newcommand{\mc}[1]{{\mathcal #1}}
\newcommand{\bb}[1]{{\mathbb #1}}
\newcommand{\e}{\varepsilon}
\newcommand{\lan}{\langle}
\newcommand{\ran}{\rangle}
\newcommand{\N}{\bb N}
\newcommand{\Z}{\bb Z}
\newcommand{\R}{\bb R}
\newcommand{\T}{\bb T}
\newcommand{\ac}{{\rm ac}}
\renewcommand{\tilde}{\widetilde}
\renewcommand{\hat}{\widehat}
\renewcommand{\u}{\bar u}
\renewcommand{\v}{\bar v}
\renewcommand{\L}{L_{\u}}
\begin{document}

\title[Sharp interface limit for a quasilinear functional]{Sharp interface limit for a quasi-linear large deviation rate function}

\author[T. Kagaya]{Takashi Kagaya}
\address[T. Kagaya]{Graduate school of Engineering, Muroran Institute of Technology, 27-1 Mizumoto-cho, Hokkaido, 050-8585, Japan}
\email{kagaya@muroran-it.ac.jp}

\author[K. Tsunoda]{Kenkichi Tsunoda}
\address[K. Tsunoda]{Faculty of Mathematics, Kyushu University, 744 Motooka, Nishi-ku, Fukuoka, 819-0395, Japan}
\email{tsunoda@math.kyushu-u.ac.jp}

\subjclass[2020]{Primary 34D15; 
Secondly, 60F10, 
82C22.
}
\keywords{Sharp interface limit, reaction-diffusion equation, motion by mean curvature, large deviations.}
\date{\today}

\maketitle

\begin{abstract}
We discuss the sharp interface limit, leading to a mean curvature flow energy, for the rate function of the large deviation principle of a Glauber+Kawasaki process with speed change. 
We provide an explicit formula of the limiting functional given by the mobility and the transport coefficient.
\end{abstract}

\section{Introduction}\label{sec1}
The investigation of a macroscopic evolution such as heat flow, fluid motion, etc. from a microscopic system is a central problem in statistical physics. In the past decades, the large deviation principle for a stochastic system and its rate function have attracted attention not only in probability theory but also in mathematical physics. For instance, the so-called macroscopic fluctuation theory, which is based on a description of large deviations for a microscopic system, has been successfully applied to a study of stationary nonequilibrium states, cf. \cite{BDGJL}. At the same time, a study of large deviations has provided rich mathematical problems for partial differential equations and variational problems.

Our main concern is the sharp interface limit for a Glauber+Kawasaki process with speed change. For this purpose, we start by defining the late function of the large deviation principle for the process. Let $\T_N^d$ be the $d$-dimensional discrete torus with length $N$, that is, $\T_N^d=(\Z/N\Z)^d$ (a sum in $\T_N^d$ is taken modulo $N$). Here, $N\in\N$ is a scaling parameter which we will let infinity later. Let $X_N$ be the configuration space $\{0,1\}^{\T_N^d}$ and denote its generic element as $\eta=\{\eta(x)\}_{\T_N^d}$. We regard a configuration $\eta\in X_N$ in the following manner: for each site $x\in\T_N^d$, there is a particle at $x$ if $\eta(x)=1$, otherwise, there is no particle at site $x$.

We now define the Markovian generator $L_N$ defined as $L_Nf=N^2L_Kf+ \tilde{K}L_Gf$ for any function $f:X_N\to\R$, where $L_K$ and $L_G$ are given by
\begin{align*}
L_Kf(\eta)&=\sum_{x\in\T_N^d}\sum_{i=1}^dc_{0,e_i}(\tau_x\eta)\{f(\eta^{x,y}) - f(\eta)\},\\
L_Gf(\eta)&=\sum_{x\in\T_N^d}c_0(\tau_x\eta)\{f(\eta^x) - f(\eta)\},
\end{align*}
and $\tilde{K}$ is a positive constant.
In the previous formulas, $\tau_x\eta, \eta^{x,y}$ and $\eta^x$ are the configurations obtained from a configuration $\eta$ by translating by $x$, by exchanging the occupation variables at $x$ and $y$ and by flipping the occupation variable at $x$, respectively: $\tau_x\eta(z)=\eta(x+z)$,
\begin{align*}
\eta^{x,y}(z)=
\begin{cases}
\eta(y), \quad \text{if $z=x$},\\
\eta(x), \quad \text{if $z=y$},\\
\eta(z), \quad \text{otherwise},
\end{cases}
\quad
\eta^x(z)=
\begin{cases}
1-\eta(x), \quad \text{if $z=x$},\\
\eta(z), \quad \text{otherwise}.
\end{cases}
\end{align*}
Let $(\eta_t^N)_{t\ge0}$ denote a Markov process generated by $L_N$. Although one needs to assume several conditions for $c_{0,e_j}$ and $c_0$ for the validity of the following results, we do not go into the details here. See \cite{FvMST1} for the details. Let $\T^d$ be the $d$-dimensional continuum torus $(\R/\Z)^d$. We define the empirical measure by
\begin{align*}
\pi_t^N(du)=\dfrac{1}{N^d}\sum_{x\in\T_N^d}\eta_t^N(x)\delta_{x/N}(du),
\end{align*}
where $\delta_u, u\in\T^d$ stands for the Dirac measure at $u\in\T^d$.

The scaling limit for empirical measures is a fundamental problem in the study of interacting particle systems. Under an appropriate convergence on the initial distributions and the so-called gradient condition for $c_{0,e_j}$, it is known from \cite{DFL,FHU} that the empirical measure $\pi_t^N(dx)$ converges to the deterministic measure $\rho(t,x)dx$ as $N\to\infty$, where $\rho(t,x)$ is the unique weak solution to the reaction-diffusion equation
\begin{align}\label{intro6}
\partial_t\rho=\Delta P(\rho) + \tilde{K}(B(\rho) - D(\rho)).
\end{align}
Here, $P, B, D$ are some functions which are determined from $c_{0,e_j}$ and $c_0$.
When the jump rate $c_0$ of the Glauber dynamics corresponds to the Ising model in a low-temperature regime,
the reaction term $B-D$ becomes a derivative of a double-well potential having exactly two local minima,
which we will denote $\rho_\pm$. See \cite{DFL} for the details.

For the Glauber+Kawasaki process, a large deviation principle, which determines the decay rate for the probability of an atypical event of the system, has also been studied in \cite{JLV, BFG, LT}. Loosely speaking, for a given density evolution $\phi:[0,T]\times\T^d\to[0,1]$, the probability that the empirical measure $\pi_t^N(dx)$ follows $\phi(t,x)dx$ behaves as
\begin{align*}
\mathbb P\left(\pi^N_\cdot\sim \phi(\cdot,x)dx\right) \approx \exp \{-N^d S(\phi)\},
\end{align*}
where $S(\phi)$ is given by
 \begin{align*}
S(\phi) &=\sup_{H\in C^{1,2}([0,T]\times \T^d)} J^H(\phi),\\
J^H(\phi) 
&= \int_{\T^d} \phi(T,x)H(T,x) \; dx - \int_{\T^d} \phi(0,x)H(0,x) \; dx\\
& \quad - \int_0^T \int_{\T^d} \left\{ \phi\partial_t H + P(\phi)\Delta H + \sigma(\phi)|\nabla H|^2 \right\} \; dxdt\\
& \qquad - \int_0^T \int_{\T^d} \tilde{K}\left\{ B(\phi)\left(e^{H}-1\right) +  D(\phi)\left(e^{-H}-1\right) \right\} \; dxdt.
\end{align*}
Note that $S(\phi)$ vanishes if and only if $\phi$ solves the reaction-diffusion equation \eqref{intro6}.

Letting $\e := 1/ \sqrt{\tilde{K}}$, the reaction-diffusion equation \eqref{intro6} introduces an Allen-Chan type equation 
\begin{equation}\label{intro7} 
\partial_t \rho_\e = \Delta P(\rho_\e) + \frac{1}{\e^2} (B(\rho_\e) - D(\rho_\e)).
\end{equation}
Heuristically, at each time, $\rho_\e$ is close to a step function for sufficient small $\e$, and the transition layer converges to a surface $\Gamma_t$ generating a mean curvature flow with a mobility constant $\theta$ determined by $P, B$ and $D$ as $\e \to +0$, namely, the motion of $\Gamma_t$ is governed by $v_t - \theta h_t$, where $v_t$ and $h_t$ are the normal velocity and the mean curvature of $\Gamma_t$, respectively. 
In particular, the transition layer can be represented as $\rho_\varepsilon(t, x) \approx \overline{u}(d(t,x)/ \e)$, where $d(t,x)$ is a signed distance function from $\Gamma_t$ and $\overline{u}$ is a solution to the ordinary differential equation 
\begin{equation}\label{pro-translayer}
\begin{cases}
\left(P(\bar u)\right)'' +B(\bar u) - D(\bar u)=0 \quad \text{in} \; \; \R,\\
\bar u(\pm\infty)=\rho_\pm, \quad \bar u(0)=\dfrac{\rho_++\rho_-}{2}.
\end{cases}
\end{equation}
For the known convergence results, we refer to \cite{Ch, ESS, BSS, Il, So} for the case $P(\rho) = \rho$ and \cite{EFHPS} for more general $P(\rho)$.

For the case $P(\rho) = \rho/2$, Bertini, Butt\`{a} and Pisante \cite{BBP} characterized the functional $S_\e(\phi)$ from the perspective of the sharp interface limit by substituting a family of functions generating a transition layer around an arbitrary fixed geometric flow into the functional.
Our purpose is to extend the result to a more general function $P$.
For this purpose, for each $\e>0$ let us define 
 \begin{align}
&S_\e(\phi) =\sup_{H\in C^{1,2}([0,T]\times \T^d)} J_\e^H(\phi), \label{def-S} \\
& \begin{aligned}
J_{\e}^H(\phi) 
&= \dfrac{1}{\e}\int_{\T^d} \phi(T,x)H(T,x) \; dx - \dfrac1\e\int_{\T^d} \phi(0,x)H(0,x) \; dx\\
& \quad - \dfrac{1}{\e}\int_0^T \int_{\T^d} \left\{ \phi\partial_t H + P(\phi)\Delta H + \sigma(\phi)|\nabla H|^2 \right\} \; dxdt\\
& \quad \quad - \dfrac{1}{\e^3}\int_0^T \int_{\T^d} \left\{ B(\phi)\left(e^{H}-1\right) +  D(\phi_t)\left(e^{-H}-1\right) \right\} \; dxdt.
\end{aligned} \label{def-J}
\end{align}
We also assume that $P: [0,1] \to [0, \infty)$ and $B,D: [0,1] \to \R$ and $W: [0,1] \to \R$ are smooth functions satisfying the following conditions: 
\begin{itemize}
\item[(A1)] $P$ satisfies $P(0)=0$ and $P'(\rho)>0$ for any $\rho\in[0,1]$.
\item[(A2)] $B(\rho)+D(\rho)$ is positive for any $\rho\in[0,1]$ and $B-D=-W'$.
\item[(A3)] $W$ is a double-well potential, that is, there exist exactly three critical points $0<\rho_-<\rho_*<\rho_+<1$ such that $W(\rho_\pm) < W(\rho)$ for any $\rho\neq\rho_\pm$ and $W''(\rho_\pm)>0$.
\item[(A4)] $W$ satisfies a {\it $P$-balance condition}, that is, it holds that 
\begin{align}\label{2-1}
\int_{\rho_-}^{\rho_+} W'(\rho)P'(\rho) \; d\rho=0. 
\end{align}
\end{itemize}
We note that the conditions (A1) and (A2) are satisfied when $B$ and $D$ are determined from a wide class of jump rates of the Glauber dynamics. Moreover, the conditions (A3) and (A4) were introduced in \cite{EFHPS} and it was clarified that these conditions are needed to obtain a sharp interface limit for \eqref{intro7} leading to the motion by mean curvature.

In these settings, our goal can be stated that, restricting the form of a family of functions $\{\phi_\e\}_{\e > 0}$ so that functions generating the transition layer around an arbitrary fixed geometric flow $\{\Gamma_t\}_{t \in [0,T]}$, we show a ``formal'' $\Gamma$-convergence from $S_\e(\phi_\e)$ to
\begin{align}\label{intro3}
S_\ac(\Gamma)=\int_0^T \int_{\Gamma_t}\dfrac{(v_t-\theta h_t)^2}{4\mu} \; d\mc H^{d-1}dt,
\end{align}
where $v_t, h_t$ are respectively the normal velocity and the mean curvature of $\Gamma_t$, $\mathcal{H}^{d-1}$ is the $(d-1)$-dimensional Hausdorff measure and $\theta, \mu$ are respectively the mobility and the transport coefficient determined by $P, B$ and $D$ (see \eqref{def-mu-theta} for details). 
To state the form of the family of functions $\{\phi_\e\}_{\e > 0}$, we define a regularized version of a signed distance function from $\Gamma_t$ as follows.
For a family of oriented smooth hyper-surfaces $\Gamma=\{\Gamma_t\}_{t\in[0,T]}$ with $\Gamma_t=\partial\Omega_t$ for some open $\Omega_t\subset \T^d$ and with the finite surface area for any $t \in [0,T]$,  
 choose $d(\cdot, t)$ as a regularized version of the signed distance from $\Gamma_t$ satisfying 
\begin{equation*}
d(t,x) = 
\begin{cases}
{\rm dist}(x, \Gamma_t) & \text{if} \; \; x \not\in \Omega_t \; \; \text{and} \; \; {\rm dist}(x, \Gamma_t) \ll 1, \\
-{\rm dist}(x, \Gamma_t) & \text{if} \; \; x \in \Omega_t \; \; \text{and} \; \; {\rm dist}(x, \Gamma_t) \ll 1. 
\end{cases}
\end{equation*}
See the end of Section \ref{sec2} for the proximity to the surface stated above. 
Then, the main result in this paper is stated as follows.

\begin{theorem}\label{thm2-1}
Assume the properties (A1)--(A4) hold. 
Let $\Gamma=\{\Gamma_t\}_{t\in[0,T]}$ be a family of oriented smooth hyper-surfaces with $\Gamma_t=\partial\Omega_t$ for some open $\Omega_t\subset \T^d$ and with the finite surface area for any $t \in [0,T]$. 
Let also $\overline{u}$ be the unique smooth solution to \eqref{pro-translayer}. 
For smooth functions $Q:[0,T]\times\T^d\times\R\to\R$ and $R_\e:[0,T]\times\T^d\to\R$, define the function $\phi_\e:[0,T]\times\T^d\to[0,1]$ by
\begin{align}\label{2-6}
\phi_\e(t,x)=\bar u\left(\dfrac{d(t,x)}{\e}+\e Q\left(t,x,\dfrac{d(t,x)}{\e}\right) \right) +\e R_\e(t,x).
\end{align}
Then we have the following.
\begin{enumerate}
\item If $Q$ and $R_\e$ satisfy
\begin{align}
&\sup_{(t, x, \xi) \in [0,T] \times \T^d \times \R} \left(\frac{|\partial_t Q|}{1+|\xi|} + \sum_{i=0}^2 \sum_{j=0}^2 \frac{|\partial_\xi^i \nabla^j Q|}{1+|\xi|} \right) < \infty, \label{as-Q}\\
&\lim_{\e \to 0} \left(\sup_{(t,x) \in [0,T] \times \T^d} (|R_\e| + |\partial_t R_\e| + |\nabla R_\e| + |\nabla^2 R_\e|) \right) = 0, \label{as-R}
\end{align}
then 
\begin{align*}
\liminf_{\e\to0} S_\e(\phi_\e) \ge S_\ac(\Gamma).
\end{align*}
\item There exists $\hat{Q}$ such that, choosing $Q=\hat{Q}$ and $R_\e=0$, it holds that 
\begin{align*}
\lim_{\e\to0}S_\e(\phi_\e)=S_\ac(\Gamma).
\end{align*}
\end{enumerate}
\end{theorem}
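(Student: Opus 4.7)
The plan is to reduce $S_\e(\phi_\e) = \sup_H J_\e^H(\phi_\e)$ to a one-dimensional variational problem on $\Gamma = \bigcup_t \Gamma_t$ by using test functions of the form $H(t,x) = \e h(t,x)\psi(d(t,x)/\e)$, where $\psi : \R \to \R$ is a smooth profile decaying at $\pm\infty$ and $h : [0,T] \times \T^d \to \R$ is smooth, and then performing a matched asymptotic expansion in Fermi coordinates $(t, y, \xi)$ with $y \in \Gamma_t$ and $\xi = d/\e$. In these coordinates the volume element factorizes as $dx = \e(1 + O(\e \xi))d\xi\,d\mc H^{d-1}(y)$, $\partial_t d|_{\Gamma_t} = -v_t$, $\Delta d|_{\Gamma_t} = -h_t$ and $|\nabla d| = 1$ to leading order, so every integral localizes to a product of a surface integral on $\Gamma_t$ and a normal profile integral in $\xi$.

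After integrating by parts to move derivatives off $H$ and Taylor-expanding $e^{\pm H} - 1 = \pm H + H^2/2 + O(H^3)$, one is led to study the residual $\mc E_\e := \partial_t \phi_\e - \Delta P(\phi_\e) - \e^{-2}(B-D)(\phi_\e)$ of $\phi_\e$ in \eqref{intro7}. The profile equation \eqref{pro-translayer} gives $(P(\bar u))'' = -(B-D)(\bar u)$, so the $O(\e^{-2})$ part of $\mc E_\e$ cancels; writing $\phi_\e = \bar u(Z) + \e R_\e$ with $Z = d/\e + \e Q(t, x, d/\e)$, the remainder inside a tubular neighborhood reduces to $\e^{-1}\bar u'(Z)[\partial_t d - P'(\bar u(Z))\Delta d] + O(1)$. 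Substituting the ansatz for $H$ and integrating in $\xi$, the coefficients that appear are the profile integrals defining the constants $\theta, \mu$ in \eqref{def-mu-theta}, and the $P$-balance condition \eqref{2-1} ensures that the would-be $\e^{-1}$ divergence from the linear-in-$H$ part vanishes. One arrives at
\begin{align*}
\lim_{\e \to 0} J_\e^{\e h \psi(d/\e)}(\phi_\e) = \int_0^T \int_{\Gamma_t} \bigl\{\mc A(\psi)\, h (v_t - \theta h_t) - \mc B(\psi)\, h^2\bigr\}\, d\mc H^{d-1}\, dt
\end{align*}
for explicit profile functionals $\mc A(\psi), \mc B(\psi)$.

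For part (1), the perturbations $\e Q$ and $\e R_\e$ enter only through $O(\e)$ corrections to $\phi_\e$ that, under \eqref{as-Q} and \eqref{as-R}, are uniformly small and affect none of the leading-order profile integrals. Pointwise optimization over $h \in \R$ converts the integrand above into $[\mc A(\psi)]^2 (v_t - \theta h_t)^2 / (4 \mc B(\psi))$, and a further variational argument over $\psi$ shows that $\sup_\psi [\mc A(\psi)]^2 /(4 \mc B(\psi)) = 1/(4\mu)$ (a Rayleigh-quotient calculation whose optimizer is related to $\bar u'$), yielding $\liminf_{\e\to 0} S_\e(\phi_\e) \ge S_\ac(\Gamma)$.

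For part (2), $\hat Q$ is constructed by solving a linear corrector equation $\mc L \hat Q = \mc G[v_t, h_t]$, where $\mc L u := (P'(\bar u) u)'' + (B-D)'(\bar u) u$ is the linearization of \eqref{pro-translayer} around $\bar u$ and $\mc G$ encodes the geometric data so as to realize the optimal $h^*, \psi^*$ of part (1). Solvability rests on the Fredholm alternative: $\ker \mc L = \mathrm{span}\{\bar u'\}$, and the compatibility of $\mc G$ against $\bar u'$ reduces to \eqref{2-1}. The matching upper bound $\limsup_{\e \to 0} S_\e(\phi_\e) \le S_\ac(\Gamma)$ is then obtained by completing the square on $J_\e^H(\phi_\e)$ viewed as a quadratic form in $H$ modulo higher-order corrections. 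The main obstacle will be this upper bound: one must control $J_\e^H(\phi_\e)$ uniformly over \emph{all} admissible $H \in C^{1,2}([0,T]\times\T^d)$, not only the layer ansatz $H = \e h \psi(d/\e)$, and handle cubic remainders in $e^{\pm H} - 1$ when $H$ is not a priori small; this typically requires truncation of $H$ together with a moment/maximum-principle estimate reflecting that the effective optimizer concentrates on the layer and is of size $O(\e)$.
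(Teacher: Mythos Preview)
Your argument for part (1) has a genuine gap. You claim that ``the perturbations $\e Q$ and $\e R_\e$ enter only through $O(\e)$ corrections to $\phi_\e$ that \ldots affect none of the leading-order profile integrals,'' and accordingly your residual $\mc E_\e$ omits $Q$. This is wrong: since $\Delta P(\phi_\e)$ carries two spatial derivatives, the shift $\e Q(t,x,d/\e)$ in the argument of $\bar u$ contributes at order $\e^{-1}$. The correct expansion is
\[
\mc E_\e \;=\; \e^{-1}\Bigl[\bar u'(d_\e)\partial_t d - \bar v'(d_\e)\Delta d - 2\bar v''(d_\e)\,\partial_\xi Q_{d_\e} - \bar v'(d_\e)\,\partial_\xi^2 Q_{d_\e}\Bigr] + O(1)\;=:\;\e^{-1}F_Q + O(1),
\]
so after your product ansatz $H_1 = h(t,x)\psi(\xi)$ the limiting integrand is $h\,\langle F_Q(t,x,\cdot),\psi\rangle_{L^2} - \tfrac{1}{2}h^2\langle -L_{\bar u}\psi,\psi\rangle_{L^2}$, whose linear part is in general neither independent of $Q$ nor proportional to $v_t - \theta h_t$. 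The product ansatz \emph{can} be salvaged, but only with the specific profile $\psi = \bar v' = P'(\bar u)\bar u'$: then $2\bar v''\partial_\xi Q + \bar v'\partial_\xi^2 Q = (\bar v')^{-1}\bigl((\bar v')^2\partial_\xi Q\bigr)'$ integrates to zero against $\bar v'$, while $\langle F_Q,\bar v'\rangle = \theta_1(\partial_t d - \theta\Delta d)$ and $\langle -L_{\bar u}\bar v',\bar v'\rangle = 2\nu$ give exactly $(v_t-\theta h_t)^2/(4\mu)$. Your guess that the optimizer is ``related to $\bar u'$'' reflects the semilinear case $P(\rho)=\rho/2$ of \cite{BBP} and does not survive the quasilinear setting; the paper emphasizes precisely this non-separability (Remark \ref{rem:diff}). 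The paper's own proof of (1) avoids the product ansatz altogether: it allows $H_1 = H_1(t,x,\xi)$, chooses $H_1 = (-L_{\bar u})^{-1}F_Q$, and then invokes Proposition \ref{prop3-2} to bound $\tfrac{1}{2}\langle F_Q,(-L_{\bar u})^{-1}F_Q\rangle \ge (\partial_t d - \theta\Delta d)^2/(4\mu)$ pointwise.

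For part (2) your route diverges more substantially from the paper's. You propose solving a corrector equation for the linearization $\mc L u = (P'(\bar u)u)'' + (B-D)'(\bar u)u$ of the profile ODE and then bounding $\sup_H J_\e^H$ uniformly over \emph{all} $H$ by completing the square --- you correctly flag this uniform bound as the main obstacle. The paper sidesteps it entirely: it uses the exact maximizer $H_{\max,\e}$, which solves the nonlinear elliptic equation \eqref{2-9}, so that $S_\e(\phi_\e)$ is given by the closed formula \eqref{2-5}, and then proves (via maximum-principle and elliptic-regularity arguments, Proposition \ref{prop:dec-H}) that $H_{\max,\e} = \e\hat H_1(t,x,d_\e) + O(\e^2)$ with $L_{\bar u}\hat H_1 = -F_Q$. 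The relevant linearization is thus $L_{\bar u}\psi = [2\sigma(\bar u)\psi']' - [B(\bar u)+D(\bar u)]\psi$, coming from the \emph{rate function}, not your $\mc L$ from the profile ODE; and $\hat Q = \bar Q_{\min}$ is the minimizer of $\langle F_Q,(-L_{\bar u})^{-1}F_Q\rangle$ over $Q$ (Proposition \ref{prop3-2}), not the solution of a Fredholm problem for $\mc L$. With these ingredients the limit in (2) is computed directly, without any uniform-in-$H$ estimate.
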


We note that the sequence of the functions $\phi_\e$ formed by \eqref{2-6} converges to $\rho_-$ inside $\Omega_t$ and $\rho_+$ outside $\Omega_t$, which gives us a kind of the ``convergence class'' from function to geometric flow. Further discussion of this ``convergence class'' is needed to prove a more accurate $\Gamma$-convergence theory that does not assume the regularity of the geometric flow. 
Another remark is that in the previous work \cite{BBP}, the mobility $\theta$ is specified as $1/2$, which is consistent with the sharp interface limit for \eqref{intro7}.
However, since we deal with the rate function related to the quasilinear reaction-diffusion equation \eqref{intro7}, how the mobility is determined is unclear. 
We also note that the mobility that we obtain coincides with the one appearing in the sharp interface limit for \eqref{intro7} (cf. \cite{EFHPS}). 
Finally, we mention the important difference between our analysis and the one of \cite{BBP}. 
An optimal corrector, which is needed to obtain the mobility and the transport coefficient, can not be expressed as a product of two functions, one is a function in a scaled singed distance from the interface and the other is a function in time and a point of $\T^d$. 
For this reason, we need to care about all the dependence on the latter two variables in our analysis. See also Remark \ref{rem:diff} for more details.

In the rest of this section, we examine several papers related to this work.
The sharp interface limit for the rate functions of  the large deviation principle has been originally studied in \cite{KORV}
in the case of the stochastic reaction-diffusion equation:
\begin{align}\label{intro1}
\partial_t\rho=\Delta\rho-\e^{-2}W'(\rho)+\sqrt{2\gamma}\xi_\delta,
\end{align}
where $\e, \gamma>0$ are small parameters, $W$ is a nice potential and $\xi_\delta$ is a smooth random noise with the correlation length $\delta>0$.
It is known that for fixed $\delta>0$, as $\gamma\to0$ the law of \eqref{intro1} satisfies a large deviation principle for some rate function $I_\e^{(\delta)}$. Formally, as $\delta\to0$ the rate function $I_\e^{(\delta)}$ converges to the function $I_\e$  given by
\begin{align*}
I_\e(\rho)=\dfrac14\int_0^T\int_{\T^d} \left( \partial_t\rho -\Delta\rho + \e^{-2} W'(\rho) \right)^2\; dxdt.
\end{align*}
This formal procedure has been justified in \cite{FL} in the case where the spatial dimension is one and the random noise is a space-time white noise. Higher dimensional cases are much more delicate because of fluctuations coming from the random noise. See \cite{HW} and references therein for more details.

Returning to \cite{KORV}, they showed that if a family of space-time functions $\rho_\e$ converges to a step function at each time and creates an evolution of hyper-surfaces $\Gamma=\{\Gamma_t\}_{t\in[0,T]}$ in the limit, $\e I_\e(\rho_\e)$ converges as $\e\to0$ to  $S_\ac(\Gamma)$ with the model-dependent constants $\theta$ and $\mu$. 
We also note that a $\Gamma$-limsup inequality for smooth functions has been proved in \cite{KORV} and a $\Gamma$-liminf inequality in \cite{MR}. 
In particular, in the latter result \cite{MR}, the functional $S_\ac(\Gamma)$ is defined in the sense of measure to discuss without the regularity assumption of the geometric flow.

The same problem has been considered for other models, which are studied in statistical physics. It is known that the motion by mean curvature appears in the sharp interface limit for the Glauber dynamics for Ising systems with Kac potentials and the Glauber+Kawasaki process. The large deviations for both processes are well-studied interacting systems and the sharp interface limit for the rate functions of these models has been studied in \cite{BBP}.

We finally mention several results that connect the Glauber+Kawasaki process with
the motion by mean curvature. Let us consider the case where $P(\rho)=\rho$ for a while.
As mentioned before, it is known that if the potential of the reaction term $B-D$ is
a double-well and satisfies the balance condition \eqref{2-1}, the singular limit for \eqref{intro7}
leads to the motion by mean curvature in the limit $\e\to0$, cf. \cite{BELLETTINI}.
Since the macroscopic density of the Glauber+Kawasaki process evolves
according to the reaction-diffusion equation, one may expect that, introducing
a large constant $K_N$ diverging to $\infty$ as $N\to\infty$ in front of the Glauber generator,
the motion by mean curvature appears in the scaling limit
for the Glauber+Kawasaki process. This was originally done by
the correlation function method in \cite{BONAVENTURA}
and by the level-set method in \cite{KS}.
Recently, the same procedure has been done by the entropy method in \cite{FT}.
The entropy method is more robust than the other methods
and thereby one can handle the same problem with nonlinear $P$.
Indeed, using the entropy method, the sharp interface limit has been established
in the case of the Glauber+zero-range process in \cite{EFHPS}
and the Glauber+Kawasaki process with speed change in \cite{FvMST1}, respectively.
Note that when $P(\rho)=\rho$ and the balance condition \eqref{2-1} is broken,
a different scaling must be taken and a constant-speed interface flow appears in the sharp interface limit, see \cite{FvMST2}.

This paper is organized as follows. Section \ref{sec2} lists known propositions that are necessary to prove the main theorem and several notions also defined in Section \ref{sec2}.
In Section \ref{sec3} we prove Theorem \ref{thm2-1} after preparing several auxiliary lemmas. Appendices \ref{seca} and \ref{secb} are devoted to the study of analytic and geometric properties which are necessary for the proof of Theorem \ref{thm2-1}, respectively.

\subsection*{Acknowledgments}
The first author was partially supported by Japan Society for the Promotion of Science (JSPS) through grants: KAKENHI \#20H01801, \#21H00990, \#23K12992, \#23H00085.
The second author was partially supported by Japan Society for the Promotion of Science (JSPS) through grant: KAKENHI \#22K13929.

\section{Preparations}\label{sec2}

In this section, we list known propositions that are necessary to prove the main theorem, and several notions are defined.

We first recall the existence of a unique smooth solution to the ODE \eqref{pro-translayer} and some estimates of the solution. The ODE can be re-formulated for $\bar v= P(\bar u)$ as 
\begin{align}\label{2-3}
\begin{cases}
\bar v'' +B\left(P^{-1}(\bar v)\right) - D\left(P^{-1}(\bar v)\right)=0 \quad \text{in} \; \; \R,\\
\bar v(\pm\infty)=P(\rho_\pm), \quad \bar v(0)=P\left(\dfrac{\rho_++\rho_-}{2}\right). 
\end{cases}
\end{align}
We state some properties of $\bar u$ and $\bar v$ as follows. 

\begin{lemma}\label{lem:ODE-property}
Assume that smooth functions $P: [0,1] \to [0, \infty)$, $B,D: [0,1] \to \R$ and $W: [0,1] \to \R$ satisfy the properties {\rm (A1)--(A4)}. 
Then, \eqref{pro-translayer} and \eqref{2-3} admit a unique smooth solution. 
Furthermore, the following properties hold: 
\begin{itemize}
\item[(1)] There exists $\gamma_{\max, \bar u} >0$ such that, for any $\gamma \in (0, \gamma_{\max, \bar u})$, there exists $C_{\gamma, \u} > 0$ such that 
\begin{align}
& \bar u' (\xi) > 0 \quad \text{for} \; \; \xi \in \R, \notag\\
& |\bar u'(\xi) | + |\bar u''(\xi)| + |\u'''(\xi)| \le C_{\gamma, \u} e^{-\gamma |\xi|} \quad \text{for} \; \; \xi \in \R. \label{ex-decay-u}
\end{align}
\item[(2)] Let  
\[ \gamma_{\max, \bar v} :=  \min \left\{\sqrt{\frac{W''(\rho_-)}{P'(\rho_-)}}, \sqrt{\frac{W''(\rho_+)}{P'(\rho_+)}}\right\}. \]
Then, for any $\gamma \in (0, \gamma_{\max, \bar v})$, there exists $C_{\gamma, \v} > 0$ such that 
\begin{align}
& \bar v' (\xi) > 0 \quad \text{for} \; \; \xi \in \R, \notag\\
& |\v' (\xi) | + |\v''(\xi)| + |\v'''(\xi)| \le C_{\gamma, \v} e^{-\gamma |\xi|} \quad \text{for} \; \; \xi \in \R. \label{ex-decay-v}
\end{align}
\end{itemize}
\end{lemma}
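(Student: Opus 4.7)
The plan is to reduce \eqref{pro-translayer} (equivalently \eqref{2-3}) to a first-order autonomous ODE via a conservation law, establish existence and monotonicity by separation of variables, and extract the exponential decay rates by linearizing at the rest points $P(\rho_\pm)$.

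First I would multiply \eqref{2-3} by $\v'$ and use $\v' = P'(\u)\u'$ together with $B - D = -W'$. The chain rule gives
\begin{equation*}
\frac{d}{d\xi}\left\{\frac{1}{2}\bigl(\v'(\xi)\bigr)^2 - V(\u(\xi))\right\} = 0, \qquad V(s) := \int_{\rho_-}^{s} W'(\rho)P'(\rho)\, d\rho.
\end{equation*}
The boundary condition $\v'(-\infty)=0$ yields $(\v')^2 = 2V(\u)$, while the $P$-balance condition (A4) is exactly $V(\rho_+)=0$, ensuring consistency at $+\infty$. By (A3) $W'$ changes sign from positive to negative at $\rho_*$, so $V' = W'P'$ does the same and $V>0$ on $(\rho_-,\rho_+)$ with simple quadratic zeros at the endpoints: $V(s) \sim \tfrac{1}{2}W''(\rho_\pm)P'(\rho_\pm)(s-\rho_\pm)^2$.

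Next, the reduced equation $\u'(\xi) = \sqrt{2V(\u(\xi))}/P'(\u(\xi)) > 0$ is solved by separation of variables. The improper integral $\int P'(s)/\sqrt{2V(s)}\, ds$ has non-integrable singularities of type $|s-\rho_\pm|^{-1}$ at both endpoints, so the implicit formula produces a unique smooth strictly increasing $\u:\R\to(\rho_-,\rho_+)$ with $\u(\pm\infty)=\rho_\pm$; the normalization $\u(0)=(\rho_++\rho_-)/2$ fixes the translation. Smoothness on all of $\R$ comes from standard ODE regularity on the open interval together with a local analysis at the equilibria using the smooth inverse $P^{-1}$, and $\v=P(\u)$ inherits the corresponding properties for \eqref{2-3}.

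Finally, for the decay at $+\infty$ I set $\eta(\xi) := P(\rho_+) - \v(\xi) > 0$. Taylor expansion gives $\u(\xi) - \rho_+ = -\eta/P'(\rho_+) + O(\eta^2)$ and, substituting into $(\v')^2 = 2V(\u)$,
\begin{equation*}
\bigl(\eta'(\xi)\bigr)^2 = \alpha_+^2\, \eta(\xi)^2 \bigl(1 + O(\eta(\xi))\bigr), \qquad \alpha_+ := \sqrt{W''(\rho_+)/P'(\rho_+)}.
\end{equation*}
Taking the positive square root (since $\eta'<0$) and applying a Gronwall comparison gives, for every $\gamma<\alpha_+$, the bound $\eta(\xi) \le C_\gamma e^{-\gamma\xi}$ on a half-line, and hence on all of $\R$ by smoothness. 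The symmetric analysis at $-\infty$ yields rate $\alpha_-$, and taking the minimum gives $\gamma_{\max,\v}$. Feeding this bound back into $(\v')^2 = 2V(\u)$ (quadratic in $\u-\rho_\pm$) transfers the decay to $|\v'|$, and differentiating the ODE, so that $\v'' = W'(\u)$ and $\v''' = W''(\u)\u'$, propagates it to $\v''$ and $\v'''$. Since $\u'=\v'/P'(\u)$ with $P'$ bounded away from zero, the bounds (with a possibly slightly smaller $\gamma_{\max,\u}$ coming from the chain rule applied to $P^{-1}$ near $P(\rho_\pm)$) transfer to $\u$ and its first three derivatives. The main obstacle I anticipate is absorbing the nonlinear remainder in the linearized equation sharply enough to reach any $\gamma$ strictly below the eigenvalue $\alpha_\pm$, and propagating the same rate through each differentiation without deterioration; this is a careful but standard Gronwall computation.
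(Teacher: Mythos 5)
Your argument is correct, and it is essentially a self-contained proof of the result that the paper obtains by citation. The paper's own proof is very short: it passes to the semilinear form $\v''=f(\v)$ with $f=(D-B)\circ P^{-1}$, checks that $f$ vanishes at $P(\rho_\pm)$ with $f'(P(\rho_\pm))>0$ and satisfies the equal-area condition \eqref{change-Pbalance} (which is just your $V(\rho_+)=0$ rewritten in the $\v$ variable), and then invokes a standard existence/uniqueness/exponential-decay lemma from Schaubeck's thesis; the properties of $\u$ are then read off via (A1). What you do instead is prove that standard lemma: your conservation law $(\v')^2=2V(\u)$ is precisely the identity \eqref{ODE-v'} with $V=\tilde W$ from \eqref{def-tilde-W}, which the paper establishes separately in Appendix~A, and your separation-of-variables construction plus the linearization at the rest points reproduces exactly the rate $\gamma_{\max,\v}=\min\{\sqrt{W''(\rho_\pm)/P'(\rho_\pm)}\}$ stated in the lemma, since $f'(P(\rho_\pm))=W''(\rho_\pm)/P'(\rho_\pm)$. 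The trade-off is transparency versus brevity: your route makes the origin of the decay exponents and of the balance condition (A4) visible, at the cost of carrying out the (routine) Gronwall absorption of the $O(\eta)$ remainder, which indeed poses no difficulty because only rates strictly below $\alpha_\pm$ are claimed. One small remark: no loss of rate actually occurs when transferring the bounds from $\v$ to $\u$, since $P'$ is bounded away from zero on $[0,1]$ and $\u'=\v'/P'(\u)$, $\u''$, $\u'''$ are smooth combinations of $\v',\v'',\v'''$ with bounded coefficients; but as the lemma only asserts the existence of some $\gamma_{\max,\u}>0$, your more cautious statement is harmless.
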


\begin{proof}
Letting $f := (D-B)\circ P^{-1}$, we can see that $f$ satisfies 
\begin{align} 
&f(P(\rho_\pm)) = 0, \quad f'(P(\rho_\pm)) >0, \notag\\
&\int_{P(\rho_-)}^{\hat{\alpha}} f(\alpha) \; d\alpha = \int_{P(\rho_+)}^{\hat{\alpha}} f(\alpha) \; d\alpha > 0 \quad \text{for} \; \; \hat{\alpha} \in (P(\rho_-), P(\rho_+)) \label{change-Pbalance}
\end{align}
due to the assumptions (A1)--(A4). 
We note that \eqref{change-Pbalance} follows from the assumptions (A3) and (A4).
Then, a standard theory as in \cite[Lemma 2.6.1]{SCHAUBECK} can be applied to conclude that \eqref{2-3} admits a unique smooth solution and $\bar v$ satisfies the properties in (2). 
Therefore, the unique existence of the solution of \eqref{pro-translayer} and the properties in (1) can be proved by using the assumption (A1). 
\end{proof}

We will apply the exponential decay estimates \eqref{ex-decay-u} and \eqref{ex-decay-v}, and thus we hereafter fix an arbitrary 
\begin{equation}\label{def-gamma} 
\gamma \in (0, \min\{\gamma_{\max, \bar u}, \gamma_{\max, \bar v} \}). 
\end{equation}
We also use the linear operator $\L$ defined by
\begin{equation}\label{def-L}
L_{\bar u}\psi(\xi)=\left[ 2\sigma(\bar u (\xi))\psi'(\xi) \right]' - \left[ B(\bar u(\xi)) + D(\bar u(\xi))\right]\psi (\xi) 
\end{equation}
for a suitable $\psi: \R \to \R$. For the precise definition of $\L$, see the appendix.
Let $\nu$ be the constant defined by
\begin{align}\label{2-7}
\nu= \lan \bar v', (-L_{\bar u})\bar v'\ran_{L^2}/2,
\end{align}
where $\lan\cdot, \cdot\ran_{L^2}$ denotes the standard $L^2$-norm on $\R$.
We also define the constants $\theta_1, \theta_2$ by
\begin{align}\label{2-8}
\theta_1=\int_{\rho_-}^{\rho_+}\sqrt{2\tilde W(\rho)} \; d\rho, \quad 
\theta_2=\int_{\rho_-}^{\rho_+}P'(\rho)\sqrt{2\tilde W(\rho)} \; d\rho,
\end{align}
where the function $\tilde W$ is defined as
\begin{equation}\label{def-tilde-W}
\tilde W(\rho)=\int_{\rho_-}^\rho W'(\tilde \rho)P'(\tilde \rho) \; d\tilde \rho.
\end{equation}
We will show $\lan\bar u', \bar v' \ran_{L^2} =\theta_1, \lan \bar v', \bar v'\ran_{L^2}=\theta_2$ (cf. \eqref{inner-uv}).
Then, the mobility $\mu$ and the transport coefficient $\theta$ can be chosen as 
\begin{equation}\label{def-mu-theta} 
\mu:=\nu/\theta_1^2, \quad \theta:=\theta_2/\theta_1, 
\end{equation}
respectively.

In order to analyze the function $S_\e$ defined by \eqref{def-S}, we will use the representation of it by $J^H_\e$.
Since $S_\e(\phi)$ is defined as the supremum of $J^H_\e(\phi)$ in $H$, the existence of the maximizer $H$ of $J^H_\e(\phi)$ and some properties of the maximizer as stated in the following proposition will be required.

\begin{proposition}
Let $\phi$ be a function in $C^{2,3}([0,T]\times \T^d)$ such that
$0<\phi(t,x)<1$ for any $t\in[0,T]$ and $x\in \T^d$. 
Then, there exists a unique solution $H_{\max, \e}=H_{\max, \e}(\phi)\in C^{1,2}([0,T]\times\T^d)$ to the nonlinear Poisson equation
\begin{align}\label{2-9}
\partial_t\phi+\nabla\cdot[2\sigma(\phi)\nabla H_{\max, \e}]=\Delta P(\phi)+\dfrac{B(\phi)e^{H_{\max, \e}} - D(\phi)e^{-H_{\max, \e}}}{\e^2}.
\end{align}
Moreover, the supremum in \eqref{def-J} is achieved for $H_{\max, \e}$ and
\begin{equation}\label{2-5}
\begin{aligned}
S_\e(\phi)&=\dfrac{1}{\e} \int_0^T \int_{\T^d} \sigma(\phi)|\nabla H_{\max, \e}|^2 \; dxdt \\
& \quad + \dfrac{1}{\e^3} \int_0^T \int_{\T^d} B(\phi)\left(1-e^{H_{\max, \e}}+H_{\max, \e}e^{H_{\max, \e}} \right) \; dxdt \\
& \qquad + \dfrac{1}{\e^3} \int_0^T \int_{\T^d} D(\phi) \left(1-e^{-H_{\max, \e}}-H_{\max, \e}e^{-H_{\max, \e}}\right) \; dxdt.
\end{aligned}
\end{equation}
\end{proposition}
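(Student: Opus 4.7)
The plan is to observe that $H\mapsto J_\e^H(\phi)$ is strictly concave, rewrite it so that time decouples, and derive \eqref{2-9} as its Euler--Lagrange equation. First I would integrate the $\phi\,\partial_t H$ term by parts in $t$ (whose boundary contributions cancel the two explicit endpoint terms) and the $P(\phi)\Delta H$ term by parts in $x$, obtaining the equivalent representation
\begin{align*}
J_\e^H(\phi)=\frac{1}{\e}\int_0^T\!\int_{\T^d}\bigl\{H(\partial_t\phi-\Delta P(\phi))-\sigma(\phi)|\nabla H|^2\bigr\}\,dxdt-\frac{1}{\e^3}\int_0^T\!\int_{\T^d}\bigl\{B(\phi)(e^H-1)+D(\phi)(e^{-H}-1)\bigr\}\,dxdt.
\end{align*}
The integrand depends on $H$ only through $H(t,\cdot)$ and $\nabla H(t,\cdot)$, so maximization decouples pointwise in $t$; strict concavity at each time follows from the quadratic $-\sigma(\phi)|\nabla H|^2$ (with $\sigma(\phi)>0$ since $0<\phi<1$) together with the strict convexity of $e^{\pm H}$ weighted by the nonnegative rates $B(\phi),D(\phi)$.

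Computing the G\^ateaux derivative at $H$ in the direction of a smooth test function $h$ and integrating the $\nabla H\cdot\nabla h$ term by parts would give exactly \eqref{2-9} as the Euler--Lagrange equation, and concavity then forces any solution to be the unique maximizer. To establish existence I would fix $t\in[0,T]$ and solve the corresponding spatial variational problem on $H^1(\T^d)$ by the direct method, using coercivity from the $|\nabla H|^2$ term and weak lower semicontinuity of $\sigma(\phi)|\nabla H|^2+\e^{-2}(B(\phi)e^H+D(\phi)e^{-H})$ in the appropriate topology. The unique weak maximizer satisfies a strongly monotone semilinear elliptic equation on $\T^d$ with smooth coefficients, so Schauder bootstrap yields $H_{\max,\e}(t,\cdot)\in C^2(\T^d)$. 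For joint $C^{1,2}$ regularity in $(t,x)$ I would apply the implicit function theorem to \eqref{2-9}: its linearization at $H$ sends $v\mapsto\nabla\cdot(2\sigma(\phi)\nabla v)-\e^{-2}(B(\phi)e^H+D(\phi)e^{-H})v$, and this is an invertible self-adjoint Fredholm operator on $\T^d$ because the zeroth-order coefficient is strictly positive, so $H_{\max,\e}$ inherits the required smoothness from the smooth dependence of $\phi$ and $\partial_t\phi$ on $t$.

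Finally, to obtain \eqref{2-5} I would substitute $H=H_{\max,\e}$ into the reduced expression above, replacing $\partial_t\phi-\Delta P(\phi)$ via \eqref{2-9} by $-\nabla\cdot(2\sigma(\phi)\nabla H_{\max,\e})+\e^{-2}(B(\phi)e^{H_{\max,\e}}-D(\phi)e^{-H_{\max,\e}})$. Integration by parts on the divergence term contributes $2\sigma(\phi)|\nabla H_{\max,\e}|^2/\e$, which combines with the pre-existing $-\sigma(\phi)|\nabla H_{\max,\e}|^2/\e$ to produce the first term of \eqref{2-5}, while the exponential substitution introduces $\e^{-3}H_{\max,\e}\bigl(B(\phi)e^{H_{\max,\e}}-D(\phi)e^{-H_{\max,\e}}\bigr)$, which combines with the original $(e^{\pm H}-1)$ terms to give $B(\phi)(1-e^H+He^H)/\e^3$ and $D(\phi)(1-e^{-H}-He^{-H})/\e^3$, matching the remaining two integrals. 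The hard part will be the joint $C^{1,2}$ regularity across time: there is no evolution equation to solve since the problem decouples in $t$, but promoting the $t$-wise $C^2$ solutions to a jointly $C^{1,2}$ function relies on the implicit-function argument above, where the strict positivity of $B(\phi)e^H+D(\phi)e^{-H}$ (itself a consequence of (A2) and $0<\phi<1$) is essential to ensure uniform invertibility of the linearization.
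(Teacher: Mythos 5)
Your argument is correct in outline and reaches the stated conclusions, but it is a genuinely different route from the paper's: the paper does not prove this proposition directly, it observes that for $P(\rho)=\rho$ the statement is (a rescaled version of) \cite[Lemma 2.1]{JLV} and that the general case follows by the substitution $\psi=P(\phi)$, which turns $\Delta P(\phi)$ back into $\Delta\psi$ while leaving the variational structure in $H$ untouched. What your self-contained approach buys is independence from that reference and transparency about where each hypothesis enters (strict concavity from $\sigma(\phi)>0$ and the exponentials, the need for $\phi\in C^{2,3}$ so that $\partial_t\phi$ is $C^1$ in $t$ for the implicit-function step); what the paper's reduction buys is brevity, since all the analytic work (existence, uniqueness, regularity of the maximizer) is inherited. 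Your algebra for \eqref{2-5} — substituting \eqref{2-9} into the integrated-by-parts form of $J_\e^H$, which flips $-\sigma|\nabla H|^2$ into $+\sigma|\nabla H|^2$ and assembles $1-e^{\pm H}\pm He^{\pm H}$ — is exactly right. One point to tighten: coercivity of the time-sliced functional on $H^1(\T^d)$ does \emph{not} come from the $|\nabla H|^2$ term alone (constants are in its kernel on the torus); you need the zeroth-order terms $B(\phi)e^H+D(\phi)e^{-H}$, and to control \emph{both} directions $H\to\pm\infty$ against the linear term $H(\partial_t\phi-\Delta P(\phi))$ you should invoke strict positivity of both $B(\phi)$ and $D(\phi)$ (not merely of their sum), which is how the paper uses (A2) elsewhere, e.g.\ in the proof of Proposition \ref{prop:dec-H}.
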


When $P(\rho)=\rho$ for any $\rho\in[0,1]$,
the (parabolic re-scaling $(t,x) \mapsto (\e^2 t, \e x)$ version of) proposition has been established in \cite[Lemma 2.1]{JLV}.
The general case can be deduced by performing the transformation $P(\phi)=\psi$.
 
Finally, we define the regularized version of a signed distance function stated in the introduction as follows. 
For a family of oriented smooth hyper-surfaces $\Gamma=\{\Gamma_t\}_{t\in[0,T]}$ with $\Gamma_t=\partial\Omega_t$ for some open $\Omega_t\subset \T^d$ and with the finite surface area for any $t \in [0,T]$,  
there exists a positive constant $\kappa$ satisfying 
\begin{equation}\label{def-K} 
\kappa \le 1/\left(\max_{t \in [0,T], x \in \Gamma_t} \max \{|\kappa_{t, 1}(x)|, |\kappa_{t, 2}(x)|, \ldots, |\kappa_{t, d-1}(x)|\}\right), 
\end{equation}
where $\kappa_{t, 1}, \kappa_{t, 2}, \ldots, \kappa_{t, d-1}$ are the principal curvatures of $\Gamma_t$, such that any point $(t, x) \in [0,T] \times \T^d$ satisfying ${\rm dist}(x, \Gamma_t) \le \kappa$ has a unique point $\zeta(t,x) \in \Gamma_t$ satisfying 
\[ {\rm dist}(x, \Gamma_t) = |x - \zeta(t,x)|. \]
We refer to \cite[Section 14.6]{GT} for the existence of such constant $\kappa$ and some properties of the distance function are summarized in Proposition \ref{prop:dist}.
We further choose $d(\cdot, t)$ as a regularized version of the signed distance from $\Gamma_t$ satisfying 
\begin{equation}\label{def-dist} 
d(t,x) = 
\begin{cases}
{\rm dist}(x, \Gamma_t) & \text{if} \; \; x \not\in \Omega_t \; \; \text{and} \; \; {\rm dist}(x, \Gamma_t) \le \kappa, \\
-{\rm dist}(x, \Gamma_t) & \text{if} \; \; x \in \Omega_t \; \; \text{and} \; \; {\rm dist}(x, \Gamma_t) \le \kappa. 
\end{cases}
\end{equation}
Hereafter, let $n_t$ denote the outward pointing unit normal vector of the surface $\Gamma_t$. 

\section{Proof of Theorem \ref{thm2-1}}\label{sec3}
In this section, we prove Theorem \ref{thm2-1}. 
Our proof is essentially based on the strategy in Bertini et al.\ \cite{BBP} and can be summarized as follows. 
Recall the expression of $\phi_\e$ from \eqref{2-6} and we here discuss the case when $R_\e = 0$ for simplicity. 
To compute the limit of $S_\e(\phi_\e)$ as $\e \to 0$, our first purpose is to calculate the power series expansion of $S_\e(\phi_\e)$ in $\e$, namely, to decompose $S_\e(\phi_\e)$ as the following form: 
\begin{equation}\label{form-dec-S} 
S_\e(\phi_\e) = \sum_{k \in \Z} \e^k \int_0^T \int_{\T^d} \phi^k_Q \left(t,x,\frac{d(t,x)}{\e}\right) \; dxdt, 
\end{equation}
where $\phi^k_Q$ is a function depending on $Q$. 
A key tool to obtain this kind of decomposition of $S_\e(\phi_\e)$ is the decomposition of the maximizer $H_{\max, \e}$ as 
\begin{equation}\label{decompose-H} 
H_{\max, \e}(t,x)=\e \hat{H}_1(t,x,d(t,x)/\e)+ \e^2 \hat{K}_\e (t,x),  
\end{equation}
where $\hat H_1$ is a unique solution to a linearized problem of \eqref{2-9} and is determined by the function $Q$ appeared in the choice of $\phi_\e$ (the linearized problem will be introduced around \eqref{linear-H}). 
We then apply the Taylor expansion for the integrands of $S_\e(\phi_\e)$ to conclude that, concerning the form \eqref{form-dec-S}; (i) $S_\e (\phi_\e)$ consists of terms with the coefficient $\e^k$ with $k \ge -1$; (ii) as $\e \to 0$, the term with coefficient $\e^{-1}$ is of constant order and converges to the iterated integral of $\phi^{-1}_Q (t,x,\xi)$ along $t \in [0,T]$, $x \in \Gamma_t$ and $\xi \in \R$; (iii) the other terms vanish as $\e \to 0$.
We note that the growth rate assumption of $Q$ with respect to $\xi$ as in \eqref{as-Q} is needed to obtain the exponential decay of $\hat{H}_1(t,x,\xi)$ with respect to $\xi$, which yields the convergence results (ii) and (iii). 
Since the minimum value of the integral of $\phi^{-1}_Q$ with respect to $Q$ coincides with $S_\ac (\Gamma)$, we can obtain the lower semi-continuity as stated in Theorem \ref{thm2-1}. 
In the proof of (2) in Theorem \ref{thm2-1}, we have to show that the minimizer $Q_{\min}$ of the integral of $\phi^{-1}_Q$ satisfies the growth rate property \eqref{as-Q}, and we complete the proof by letting $Q$ be the minimizer. 

In the following subsections, we first prepare the Taylor expansion for some functions that appeared in $S_\e(\phi_\e)$ and the decomposition properties of $H_{\max, \e}$ in Subsection \ref{sec:pre}, which will be used to obtain $\phi^k_Q$ in \eqref{form-dec-S}. 
The minimizing problem of the integral of $\phi^{-1}_Q$ is discussed in Subsection \ref{sec:mini}. 
We finally prove (1) and (2) of Theorem \ref{thm2-1} in Subsection \ref{sec3-2} and Subsection \ref{sec3-3}, respectively. 

Throughout the paper, for simplicity, we adopt the notations 
\begin{align*}
&d=d(t,x), \quad d_\e=d/\e, \quad Q_{d_\e}=Q(t,x,d_\e), \\
&\partial_\xi Q_{d_\e}=\partial_\xi Q(t,x,d_\e), \quad \partial_\xi^2 Q_{d_\e}=\partial_\xi^2 Q(t,x,d_\e) 
\end{align*}
for a smooth function $Q:[0,T]\times\T^d\times\R\to\R$ and also to distinguish the meaning of the derivatives with respect to $x$ we adopt the notions 
\begin{align*} 
&\nabla Q_{d_\e} = \nabla \big(Q(t,x,d_\e)\big), \quad \nabla_x Q_{d_\e} = \nabla Q \lfloor_{\xi = d_\e} = (\nabla Q)(t, x, d_\e), \\
&\Delta Q_{d_\e} = \Delta \big(Q(t,x,d_\e)\big), \quad \Delta_x Q_{d_\e} = (\Delta Q)(t,x,d_\e). 
\end{align*}
These notions are adopted similarly for a smooth function $H_1: [0,T] \times \T^d \times \R \to \R$, for example, $H_{1, d_\e} = H_1(t,x,d_\e)$. 
Moreover, we denote a generic element of $d_\e$ by $\xi$ and note that the operator $L_{\u}$ (or its inverse) always acts on a $\xi$ variable.

\subsection{Preliminary computations}\label{sec:pre}
 
In this subsection, we first list the Taylor expansion for key functions to obtain the Taylor expansion for $S_\e (\phi_\e)$ in $\e$. 
For simplicity of the notations, let $O_{F}(\e^\alpha)$ represent a function $G_\e: [0,T] \times \T^d\to\R$ depending on a function $F: [0,T] \times \T^d \to \R$ and satisfying 
\[ \varlimsup_{\e \to 0} \left(\sup_{(t,x) \in [0,T] \times \T^d} |G_\e(t,x)|/ \e^\alpha \right) < \infty \]
and 
\[ G_\e(t,x) = 0 \quad \text{for} \; \; (t,x) \in [0,T] \times \T^d, \; \; \e > 0 \quad \text{if} \; \; F \equiv 0, \]
where $\alpha \in \R$. 
 
\begin{proposition}\label{prop3-1}
Let $\gamma > 0$ be a constant satisfying \eqref{def-gamma}. 
Let $\phi_\e$ be the function defined by \eqref{2-6} for smooth functions $Q: [0,T]\times\T^d\times\R\to\R$ and $R_\e: [0,T]\times\T^d\to\R$ satisfying \eqref{as-Q} and \eqref{as-R}.
Then, it holds that 
\begin{align}
&\phi_\e=\u(d_\e)+\e\u'(d_\e)Q_{d_\e}+\e R_\e+\e^2 \mc R_\e^{(1)}(t, x, d_\e), \label{tay1}\\
&\partial_t\phi_\e=\u'(d_\e)\dfrac{\partial_t d}{\e}+\e\partial_t R_\e + \mc R_\e^{(2)}(t,x,d_\e), \label{tay2}\\
&\begin{aligned}
\Delta \left(P(\phi_\e) \right)
=&\; \dfrac{1}{\e^2}\v''(d_\e) +\dfrac{1}{\e}\{\v'''(d_\e)Q_{d_\e}+2\v''(d_\e)\partial_\xi Q_{d_\e}+\v'(d_\e)\partial_\xi^2 Q_{d_\e}\}\\
&\; +\frac{\Delta d}{\e} \v'(d_\e) +\dfrac{R_\e}{\e} \left(P'''(\u(d_\e)) (\u'(d_\e))^2 + P''(\u(d_\e)) \u''(d_\e)\right) \\
&\; +\mc R_\e^{(3)}(t, x, d_\e) + O_{R_\e}(\e^2), 
\end{aligned} \label{tay3}\\
&\begin{aligned}
B(\phi_\e)=&\; B\left(\u(d_\e)\right)+\e B'\left(\u(d_\e)\right)\u'(d_\e)Q_{d_\e}+\e B'\left(\u(d_\e)\right)R_\e\\
&\; +\e^2 \mc R_\e^{(5)}(t, x, d_\e) + O_{R_\e}(\e^2), 
\end{aligned}\label{tay4}\\
&\begin{aligned}
D(\phi_\e)=&\; D\left(\u(d_\e)\right)+\e D'\left(\u(d_\e)\right)\u'(d_\e)Q_{d_\e}+\e D'\left(\u(d_\e)\right)R_\e\\
&\; +\e^2 \mc R_\e^{(6)}(t,x,d_\e) +O_{R_\e}(\e^2), 
\end{aligned} \label{tay5}\\
&\sigma(\phi_\e) = \sigma(\u(d_\e)) + \e \mc R_\e^{(4)} (t,x,d_\e) +O_{R_\e}(\e), \label{tay-sigma}
\end{align}
where the remainders $\mc R_\e^{(i)}: [0,T] \times \T^d \times \R \to \R, i=1,2,\ldots,6,$ satisfy
\begin{align*}
\varlimsup_{\e \to 0} \sup_{(t,x, \xi) \in [0,T] \times \T^d \times \R} e^{\gamma |\xi|/2} |\mc R_\e^{(i)}(t,x,\xi)| < \infty. 
\end{align*}
Furthermore, letting $H_1: [0,T]\times \T^d \times \R \to \R$ and $K_\e: [0,T] \times \T^d \to \R$ be smooth functions satisfying 
\begin{align}
&\sup_{(t,x,\xi) \in [0,T] \times \T^d \times \R} \left(|H_1| + |\nabla H_1| + |\partial_\xi H_1| \right)e^{\tilde{\gamma} |\xi|} < \infty, \label{as-H}\\
&\varlimsup_{\e\to0} \sup_{(t,x) \in [0,T] \times \T^d} \left(|K_\e| + \e |\nabla K_\e| \right) < \infty \label{as-K}
\end{align}
for some $0 < \tilde{\gamma} < \gamma$, and defining $H$ as 
\begin{align}\label{tay20}
H(t,x)=\e H_1(t,x,d(t,x)/\e)+\e^2 K_\e(t,x),
\end{align}
it holds that 
\begin{align}
&\begin{aligned}
e^H=&\; 1+\e H_{1, d_\e} +\e^2(H_{1, d_\e}^2/2+K_\e) + \e^3 \mc R_\e^{(7)}(t,x, d_\e) +O_{K_\e}(\e^4),
\end{aligned} \label{tay6}\\
&\begin{aligned}
e^{-H}=&\; 1-\e H_{1, d_\e}+\e^2(H_{1, d_\e}^2/2-K_\e) + \e^3 \mc R_\e^{(8)}(t,x,d_\e) +O_{K_\e}(\e^4), 
\end{aligned} \label{tay7}\\
&\nabla H = \e \nabla_x H_{1, d_\e} + \partial_\xi H_{1, d_\e}\nabla d+\e^2 \nabla K_\e, \label{tay8}
\end{align}
where the remainders $\mc R_\e^{(i)}: [0,T] \times \T^d \times \R \to \R, i=7,8,$ satisfy
\begin{align*}
\varlimsup_{\e \to 0} \sup_{(t,x, \xi) \in [0,T] \times \T^d \times \R} e^{\tilde{\gamma} |\xi|/2} |\mc R_\e^{(i)}(t,x,\xi)| < \infty. 
\end{align*}
\end{proposition}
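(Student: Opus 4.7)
The proof proceeds by systematic application of Taylor's theorem combined with the exponential decay estimates \eqref{ex-decay-u} and \eqref{ex-decay-v} from Lemma~\ref{lem:ODE-property}. The plan is to expand each nonlinear function on the left-hand side around a simple reference value (namely $\bar u(d_\e)$ for functions of $\phi_\e$, and $0$ for the exponentials of $H$), extract the leading few coefficients by inspection, and bound the higher-order remainders using the decay of the profile together with the growth controls \eqref{as-Q}, \eqref{as-R}, \eqref{as-H}, \eqref{as-K} on the data.

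The structural observation that makes the remainders inherit exponential decay in $d_\e$ is a \emph{shift-absorption} argument. Writing $\Xi_\e := d_\e + \e Q_{d_\e}$, assumption \eqref{as-Q} gives $|Q(t,x,\xi)| \le C(1+|\xi|)$, so there exists $\e_0 > 0$ such that for $\e \in (0, \e_0)$ one has $|\Xi_\e - d_\e| \le |d_\e|/2$, hence $|\Xi_\e| \ge |d_\e|/2$. Consequently, any expression of the form $\pi(t,x,d_\e) \cdot \bar u^{(k)}(\Xi_\e)$ or $\pi(t,x,d_\e) \cdot \bar v^{(k)}(\Xi_\e)$, with $\pi$ polynomially bounded in $d_\e$ and $k \ge 1$, automatically inherits decay $e^{-\gamma|d_\e|/2}$ via \eqref{ex-decay-u}--\eqref{ex-decay-v}, which matches exactly the decay rate required for $\mc R_\e^{(i)}$. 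The analogous mechanism for the exponentials of $H$ uses \eqref{as-H} with $\gamma$ replaced by $\tilde\gamma$.

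With this in place, the expansions \eqref{tay1}, \eqref{tay4}, \eqref{tay5}, \eqref{tay-sigma} each follow from a first-order Taylor expansion of $\bar u$, $B$, $D$, $\sigma$ around $\bar u(d_\e)$ combined with a separate Taylor expansion in the $\e R_\e$ direction, with the $R_\e$-dependent tail collected into $O_{R_\e}(\e^2)$ and the $R_\e$-free tail into $\e^2 \mc R_\e^{(i)}$. For \eqref{tay2}, one applies the chain rule to $\bar u(\Xi_\e) + \e R_\e$ using $\partial_t \Xi_\e = \partial_t d / \e + \partial_t d \cdot \partial_\xi Q_{d_\e} + \e\,(\partial_t Q)(t,x,d_\e)$, isolates the dominant $\bar u'(d_\e) \partial_t d/\e$ and $\e\,\partial_t R_\e$ terms, and bundles the rest (each carrying a factor of $\bar u'(\Xi_\e) - \bar u'(d_\e)$ or an $O(1)$ coefficient times $\bar u'(\Xi_\e)$) into $\mc R_\e^{(2)}$. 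The exponential expansions \eqref{tay6}--\eqref{tay8} are the cleanest: since $H$ is uniformly $O(\e)$ with exponential decay in $d_\e$ by \eqref{as-H}--\eqref{as-K}, the Taylor series of $e^{\pm H}$ truncates at order $\e^2$ with an $\e^3 \mc R_\e^{(7)}$ (resp.\ $\e^3 \mc R_\e^{(8)}$) remainder, and $\nabla H$ is computed directly by the chain rule using $\nabla (H_1(t,x,d_\e)) = (\nabla_x H_1)(t,x,d_\e) + (\partial_\xi H_1)(t,x,d_\e)\, \nabla d / \e$.

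The principal obstacle is \eqref{tay3}. Writing $P(\phi_\e) = \bar v(\Xi_\e) + \e P'(\bar u(\Xi_\e)) R_\e + O(\e^2 R_\e^2)$ via the identity $P \circ \bar u = \bar v$ and applying $\Delta$, one first treats $\Delta \bar v(\Xi_\e) = \bar v''(\Xi_\e) |\nabla \Xi_\e|^2 + \bar v'(\Xi_\e) \Delta \Xi_\e$ by expanding $|\nabla \Xi_\e|^2$ and $\Delta \Xi_\e$ in powers of $\e$ using the eikonal identity $|\nabla d|^2 = 1$, then Taylor-expanding each $\bar v^{(k)}(\Xi_\e)$ around $d_\e$. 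Matching powers of $\e$ produces the $\e^{-2}$ coefficient $\bar v''(d_\e)$, the $\e^{-1}$ \emph{normal} coefficient $\bar v'''(d_\e) Q_{d_\e} + 2\bar v''(d_\e) \partial_\xi Q_{d_\e} + \bar v'(d_\e) \partial_\xi^2 Q_{d_\e}$, and the $\e^{-1}$ \emph{tangential} coefficient $(\Delta d)\, \bar v'(d_\e)$; everything else (all carrying $\bar v'''$ or lower derivatives after one additional Taylor step) falls into $\mc R_\e^{(3)}$ by the shift-absorption argument. For the $R_\e$ contribution, one expands $\e \Delta(P'(\bar u(\Xi_\e)) R_\e)$ analogously: the leading $R_\e/\e$ term originates from the $\e^{-2}$ piece of $\Delta(P'(\bar u(\Xi_\e)))$, whose coefficient equals $(P' \circ \bar u)''(d_\e) = P'''(\bar u(d_\e))(\bar u'(d_\e))^2 + P''(\bar u(d_\e)) \bar u''(d_\e)$, and all remaining $R_\e$-dependent contributions are controlled as $O_{R_\e}(\e^2)$ via \eqref{as-R}. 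The bookkeeping is the heaviest part of the proof, but each step reduces to a routine Taylor expansion combined with the shift-absorption argument above.
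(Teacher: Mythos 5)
Your proposal follows essentially the same route as the paper: termwise Taylor expansion around $\u(d_\e)$ (resp.\ around $0$ for $e^{\pm H}$), with the remainders controlled by combining the exponential decay of $\u^{(k)},\v^{(k)}$ with the linear growth bounds on $Q$ — your ``shift-absorption'' mechanism is exactly the estimate the paper carries out for $\mc R_\e^{(1)}$ — and the expansion of $\Delta P(\phi_\e)$ via the chain rule and the eikonal identity. The paper likewise proves only \eqref{tay1} and \eqref{tay3} in detail and leaves the rest to the reader, so your level of detail on the remaining items is comparable.

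Three points in your sketch are loose and should be repaired, though none is fatal. First, the inequality $|\Xi_\e-d_\e|\le|d_\e|/2$ is false near $d_\e=0$ (since $|Q|\le C(1+|\xi|)$ contains a constant term), and even where it holds it leaves no slack: it yields only $(1+|\xi|)^2e^{-\gamma|\xi|/2}$, which is \emph{not} bounded by $Ce^{-\gamma|\xi|/2}$. The correct version (used in the paper) is $|\Xi_\e|\ge(1-\e C)|\xi|-\e C$, giving decay $e^{-3\gamma|\xi|/4}$ up to a constant for $\e$ small, so that the remaining $\gamma/4$ absorbs the polynomial prefactor. Second, you cannot ``apply $\Delta$'' to the symbol $O(\e^2R_\e^2)$: the Laplacian of a uniformly small function need not be small. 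Either differentiate first and expand afterwards (as the paper does in \eqref{cal-tay1}), or retain the Taylor remainder in explicit integral form and invoke the bounds on $\nabla R_\e$ and $\nabla^2R_\e$ in \eqref{as-R}. Third, $|\nabla d|=1$ holds only where ${\rm dist}(x,\Gamma_t)\le\kappa$; on the complement $d$ is merely a smooth regularization, and the discrepancy $|\nabla d|^2-1$ multiplying $\e^{-k}\v^{(j)}(d_\e)$ must be absorbed into the remainder using the $e^{-\gamma\kappa/\e}$ smallness of the profile derivatives there (the paper's estimate \eqref{pr-d}). With these routine fixes your argument closes and coincides with the paper's.
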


\begin{proof}
This proposition is a direct consequence of a straightforward computation using a Taylor expansion.
Therefore we prove \eqref{tay1} and \eqref{tay3} only and leave the other expansions to the reader.

We first prove \eqref{tay1}. The Taylor expansion for $\u$ at $d_\e$ shows
\begin{align*}
\phi_\e=\u(d_\e)+\e\u'(d_\e)Q_{d_\e}+\e R_\e+\frac{\e^2}{2}\u''\left(d_\e+\eta_\e \e Q_{d_\e}\right)Q_{d_\e}^2,
\end{align*}
for some $0\le \eta_\e\le1$.
Let $\mc R_\e^{(1)}=\u''\left(\xi+\eta_\e \e Q\right)Q^2/2$.
It remains to show that there exists a constant $C_0$, such that
\begin{align*}
|\mc R_\e^{(1)}(t,x,\xi)|\le C_0e^{-\gamma|\xi|/2},\quad (t,x,\xi)\in[0,T]\times\T^d\times\R.
\end{align*}
Indeed, it follows from \eqref{ex-decay-u} and the assumption $|Q(t,x,\xi)| \le C (1+|\xi|)$ for some $C>0$ that, for $\e > 0$ small, 
\begin{align*}
|\mc R_\e^{(1)}(t,x,\xi)|\le&\; \frac{C}{2} (1+|\xi|)^2 e^{-\gamma |\xi + \eta_\e \e Q(t, x, \xi)|} \le \frac{C}{2} (1+|\xi|^2) e^{- \gamma (|\xi| - \e C (1+|\xi|))} \\
\le&\; \frac{C}{2} (1 + |\xi|)^2 e^{-3\gamma |\xi|/4} \le C_0 e^{-\gamma |\xi|/2} 
\end{align*}
for some $C_0>0$. 
Therefore \eqref{tay1} holds.

We next prove \eqref{tay3}. 
We have by a simple calculation 
\begin{equation}\label{cal-tay1}
\begin{aligned}
&\; \Delta P(\phi_\e) = \Delta P(\u(d_\e + \e Q_{d_\e}) + \e R_\e) \\
=&\; \nabla \cdot \left\{P'(\u(d_\e + \e Q_{d_\e}) + \e R_\e) \left( \u'(d_\e+\e Q_{d_\e}) \left(\frac{\nabla d}{\e} + \e \nabla_x Q_{d_\e} + \partial_\xi Q_{d_\e} \nabla d \right) + \e \nabla R_\e \right)\right\} \\
=&\; P''(\u(d_\e + \e Q_{d_\e}) + \e R_\e)\times \\
&\; \quad \Bigg\{ (\u'(d_\e + \e Q_{d_\e}))^2 \left| \frac{\nabla d}{\e} + \e \nabla_x Q_{d_\e} + \partial_\xi Q_{d_\e} \nabla d \right|^2 \\
&\; \quad \; + 2 \u'(d_\e + \e Q_{d_\e}) \left\langle  \frac{\nabla d}{\e} + \e \nabla_x Q_{d_\e} + \partial_\xi Q_{d_\e} \nabla d, \e \nabla R_\e \right\rangle_{\T^d} + \e^2 |\nabla R_\e|^2 \Bigg\} \\
&\; + P'(\u(d_\e + \e Q_{d_\e}) + \e R_\e)\times \\
&\; \quad \Bigg\{ \u''(d_\e + \e Q_{d_\e}) \left| \frac{\nabla d}{\e} + \e \nabla_x Q_{d_\e} + \partial_\xi Q_{d_\e} \nabla d \right|^2 \\
&\; \quad \; + \u'(d_\e + \e Q_{d_\e}) \Bigg(\frac{\Delta d}{\e} + \e \Delta_x Q_\e + 2 \langle \nabla_x \partial_\xi Q_{d_\e}, \nabla d \rangle_{\T^d} + \frac{1}{\e} \partial_\xi^2 Q_{d_\e} |\nabla d|^2 + \partial_\xi Q_{d_\e} \Delta d\Bigg) + \e \Delta R_\e \Bigg\},
\end{aligned}
\end{equation}
where $\lan\cdot, \cdot\ran_{\T^d}$ denotes the standard inner product in $\T^d$.
We hereafter write $F_\e(t,x,d_\e)$ uniformly as $\tilde{\mc R}_\e$ for any function $F_\e: [0,T] \times \T^d \times \R \to \R$ satisfying 
\[ \varlimsup_{\e \to 0} \sup_{(t,x,\xi) \in [0,T] \times \T^d \times \R} e^{\gamma' |\xi|} |F_\e(t,x,\xi)| < \infty \]
for any $\gamma' \in (0, \gamma)$ and let $\tilde{\mc R}_{\e, d_\e}$ represent $\tilde{\mc R}_\e(t,x, d_\e)$. 
Since the Taylor expansion shows that 
\begin{align*}
&\; P''(\u(d_\e + \e Q_{d_\e}) + \e R_\e) \\
=&\; P''(\u(d_\e) + \e \u'(d_\e)Q_{d_\e} + \e^2 \tilde{\mc R}_{\e, d_\e} + \e R_\e ) \\
=&\; P''(\u(d_\e)) + P'''(\u(d_\e)) (\e \u'(d_\e) Q_{d_\e} + \e^2 \tilde{\mc R}_{\e, d_\e} + \e R_\e) + \e^2 \tilde{\mc R}_{\e, d_\e} + O_{R_\e}(\e^2),  
\end{align*}
the first term of \eqref{cal-tay1} can be computed as
\begin{align*}
&\; P''(\u(d_\e)) \Bigg\{ (\u'(d_\e))^2  \left(\frac{|\nabla d|^2}{\e^2} + \frac{2 \partial_\xi Q_{d_\e}}{\e} |\nabla d|^2\right) + \frac{2\u'(d_\e) \u''(d_\e) Q_{d_\e}}{\e} |\nabla d|^2 + \tilde{\mc R}_{\e, d_\e} + O_{R_\e}(\e^2) \Bigg\} \\
&\; +P'''(\u(d_\e)) \Big(\e \u'(d_\e) Q_{d_\e} + \e^2 \tilde{R}_{\e, d_\e} + \e R_\e\Big) \left(\frac{(\u'(d_\e))^2}{\e^2} |\nabla d|^2 + \frac{1}{\e} \tilde{\mc R}_{\e, d_\e} + O_{R_\e}(\e^2)\right) \\
&\; + \tilde{\mc R}_{\e, d_\e} + O_{R_\e}(\e^2) \\
=&\; \frac{|\nabla d|^2}{\e^2} P''(\u(d_e)) (\u'(d_\e))^2 + \frac{Q_{d_\e}|\nabla d|^2}{\e}\Big(2P''(\u(d_\e)) \u'(d_\e)\u''(d_\e) + P'''(\u(d_\e))(\u'(d_\e))^3 \Big) \\
&\; + \frac{\partial_\xi Q_{d_\e}|\nabla d|^2}{\e} P''(\u(d_\e)) (\u'(d_\e))^2 + \frac{R_\e |\nabla d|^2}{\e} P'''(\u(d_\e))(\u'(d_\e))^2 + \tilde{\mc R}_{\e, d_\e} + O_{R_\e}(\e^2). 
\end{align*}
We can similarly compute the second term of \eqref{cal-tay1} as
\begin{align*}
&\; \frac{|\nabla d|^2}{\e^2} P'(\u(d_\e))\u''(d_\e) + \frac{Q_{d_\e}|\nabla d|^2}{\e} \Big(P'(u_\e)\u'''(d_\e) + P''(\u(d_\e))\u'(d_\e)\u''(d_\e)\Big) \\
&\; + \frac{\partial_\xi Q_{d_\e} |\nabla d|^2}{\e} P'(\u(d_\e)) \u''(d_\e) + \frac{\partial_\xi^2 Q_{d_\e}|\nabla d|^2}{\e} P'(\u(d_\e)) \u'(d_\e) \\
&\; + \frac{\Delta d}{\e} P'(\u(d_\e)) \u'(d_\e) + \frac{R_\e |\nabla d|^2}{\e} P''(\u(d_\e)) \u''(d_\e) + \tilde{\mc R}_{\e, d_\e} + O_{R_\e}(\e^2). 
\end{align*}
We here note that Proposition \ref{prop:dist} shows $|\nabla d(x,t)| = 1$ if $|{\rm dist}(x, \Gamma_t)| \le \kappa$ and, for any function $w: [0,T] \times \T^d \times \R \to \R$ satisfying
\[ C_w :=\sup_{(t, x, \xi) \in [0,T] \times \T^d \times \R}|w(t, x, \xi)|e^{-\gamma |\xi|} < \infty, \]
the exponential decay yields, for any $0 < \gamma' < \gamma$, 
\[ |w(t, x, d_\e)| \le C_w e^{-(\gamma-\gamma')\kappa/\e}e^{-\gamma' d_\e} \quad \text{if} \; \; |{\rm dist}(x, \Gamma_t)| \ge \kappa. \] 
Therefore, for any $k\in \N$, it holds that 
\begin{equation}\label{pr-d} 
\frac{|\nabla d|^2 w(t, x, d_\e)}{\e^k} \le \frac{w(t, x, d_\e)}{\e^k} + \tilde{\mc R}_{\e, d_\e} \quad \text{for} \; \; (t,x) \in [0,T] \times \T^d. 
\end{equation}
Substituting these inequalities into \eqref{cal-tay1}, we have \eqref{tay3} due to $v(\xi) = P(\u(\xi))$. 
\end{proof}

We next introduce a linearized problem of \eqref{2-9} to decompose the unique solution $H_{\max, \e}$ of \eqref{2-9}.
We start from a formal expansion for the solution to \eqref{2-9}.
Let $\phi$ be chosen as $\phi_\e$ given by \eqref{2-6} with $R_\e = 0$.
Note that, when $H$ has the form \eqref{tay20}, a Taylor expansion shows
\begin{align*}
\nabla \cdot (\sigma(\phi_\e) \nabla H) = \frac{1}{\e} \left( \sigma'(\u(d_\e)) \u'(d_\e) \partial_\xi H_{1, d_\e} + \sigma(\u(d_\e)) \partial_\xi^2 H_{1, d_\e}\right) + O(1)
\end{align*}
at $x$ near the moving surface $\Gamma_t$. 
Using the ansatz \eqref{decompose-H}, the Tayler expansions as in Proposition \ref{prop3-1} and the above expansion yields formally, as $\e \to 0$,  
\begin{align*} 
&\; \partial_t \phi_\e + \nabla \cdot (2 \sigma(\phi_\e) \nabla H_{\max, \e}) \\
=&\; \frac{1}{\e} \Big(\u'(d_\e) \partial_t d + 2\{\sigma'(\u(d_\e))\u'(d_\e) \partial_\xi \hat{H}_{1,d_\e} + \sigma (\u(d_\e)) \partial_\xi^2 \hat{H}_{1, d_\e}\} \Big) + O(1) 
\end{align*}
and 
\begin{align*} 
&\; \Delta P(\phi_\e) + \frac{B(\phi_\e) e^{H_{\max, \e}} - D(\phi_\e) e^{-H_{\max, \e}}}{\e^2} \\
=&\; \frac{1}{\e^2} \Big( \v''(d_\e) + B(\u(d_\e)) - D(\u(d_\e)) \Big) \\
&\; + \frac{1}{\e} \Big( \v'''(d_\e) + \{B'(\u(d_\e))-D'(\u(d_\e))\} \u'(d_\e) \Big) Q_\e \\
&\; + \frac{1}{\e} \Big(\v'(d_\e) \Delta d + 2 \v''(d_\e) Q_\e' + \v'(d_\e) Q_\e'' + \{B(\u(d_\e)) + D(\u(d_\e))\} \hat{H}_{1,\e} \Big) + O(1). 
\end{align*}
Since it follows from \eqref{pro-translayer} or \eqref{2-3} that the first and second terms in the right-hand side of the latter equality are zero, letting $d_\e = \xi$, we obtain the $1/\e$ order equality in \eqref{2-9} as 
\begin{equation}\label{linear-H} 
L_{\u} \hat{H}_1 (t,x,\xi) = \v'(\xi) \Delta d(t,x) + 2 \v''(\xi) \partial_\xi Q(t,x,\xi) + \v'(\xi) \partial_\xi^2 Q(t,x,\xi) - \u'(\xi) \partial_t d(t,x),  
\end{equation}
where $\L$ is defined by \eqref{def-L}. 
The solution $\hat{H}_1:[0,T] \times \T^d \times \R \to \R$ of the linearized equation \eqref{linear-H} can be defined since $\L$ has the inverse (see also Proposition \ref{prop:Lu}).

In the following, we rigorously perform the decomposition for $H_{\max, \e}$ in terms of $\e \hat{H}_{1, \e} = \e \hat{H}_1 (t, x, d_\e)$ and an error function $\e^2 \hat{K}_\e$. We also discuss estimates for $\hat H_{1,\e}$ and $\hat K_\e$, which will be needed to calculate the limit of $S_\e (\phi_\e)$. 

\begin{proposition}\label{prop:dec-H}
Let $Q: [0,T] \times \T^d \times \R \to \R$ be a smooth function satisfying \eqref{as-Q}. 
Define $\phi_\e: [0,T] \times \T^d\to[0,1]$ by \eqref{2-6} with $R_\e=0$. 
Let $H_{\max, \e}: [0,T] \times \T^d \to \R$ and $\hat{H}_1: [0,T] \times \T^d \times \R \to \R$ be the solution of \eqref{2-9} and \eqref{linear-H}, respectively. 
Define the function $\hat K_\e: [0,T] \times \T^d \to \R$ through the decomposition 
\[ H_{\max, \e}(t,x) = \e \hat{H}_1 (t,x, d_\e) + \e^2 \hat{K}_\e(t,x) \quad \text{for} \; \; (t, x) \in [0,T] \times \T^d. \]
Then, letting $\gamma > 0$ be satisfying \eqref{def-gamma}, there exists $0 < \tilde{\gamma} < \gamma$ such that  
\begin{align}
&\sup_{(t,x,\xi) \in [0,T] \times \T^d \times \R} e^{\tilde{\gamma} |\xi|} \sum_{i=0}^2 \sum_{j=0}^2 |\partial_\xi^i \nabla^j \hat{H}_1|< \infty, \label{es-H*}\\
&\varlimsup_{\e\to0}\left\{\sup_{(t, x) \in [0,T] \times \T^d} |\hat{K}_\e| + \e|\nabla \hat{K}_\e| \right\} < \infty. \label{es-He}
\end{align}
\end{proposition}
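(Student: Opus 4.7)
The plan is to establish \eqref{es-H*} and \eqref{es-He} separately. For \eqref{es-H*}, I would view \eqref{linear-H} as an ODE in $\xi$ parameterized by $(t,x)\in[0,T]\times\T^d$ and invoke the inverse of $\L$ on exponentially weighted $L^\infty$ spaces supplied by Proposition~\ref{prop:Lu} in the appendix (whose solvability hypothesis is secured by the $P$-balance condition (A4)). By Lemma \ref{lem:ODE-property} the factors $\u',\v',\v''$ on the right-hand side of \eqref{linear-H} decay like $e^{-\gamma|\xi|}$; the coefficients $\Delta d,\partial_t d$ are smooth and bounded on $[0,T]\times\T^d$; and $\partial_\xi Q,\partial_\xi^2 Q$ are at most linearly growing in $\xi$ by \eqref{as-Q}. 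Hence the forcing decays exponentially at any rate $\tilde\gamma<\gamma$, uniformly in $(t,x)$, and the inverse of $\L$ produces $\hat H_1$ with the same decay. To upgrade to the higher-derivative bound in \eqref{es-H*}, I would differentiate \eqref{linear-H} once or twice in $x$ and $\xi$; the resulting equations still have the form $\L(\nabla^j\partial_\xi^i \hat H_1)=(\text{exponentially decaying forcing})$, since the corresponding derivatives of $Q$ still satisfy the linear-growth bound from \eqref{as-Q}.

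For \eqref{es-He}, I would substitute $H_{\max,\e}=\e\hat H_1(t,x,d_\e)+\e^2\hat K_\e$ into \eqref{2-9} and expand every term using Proposition~\ref{prop3-1}. By design, the $\e^{-2}$ order cancels thanks to the ODE \eqref{2-3} and the $\e^{-1}$ order cancels thanks to \eqref{linear-H}. What is left is an equation for $\hat K_\e$ of the schematic form
\begin{equation*}
-2\e^2\,\nabla\cdot\bigl[\sigma(\phi_\e)\nabla\hat K_\e\bigr]+(B+D)(\phi_\e)\,\hat K_\e=G_\e(t,x)+\e\,\mathcal N_\e\bigl(t,x,\hat K_\e,\e\nabla\hat K_\e\bigr),
\end{equation*}
where $G_\e$ is uniformly bounded in $\e$ (via \eqref{es-H*} and the remainders in Proposition~\ref{prop3-1}) and $\mathcal N_\e$ collects the cubic and higher Taylor terms of $e^{\pm H_{\max,\e}}$.

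The main obstacle is proving the uniform bound \eqref{es-He} for this singularly perturbed equation, whose diffusion carries the small coefficient $\e^2$. Two structural features rescue the argument. First, (A2) gives $(B+D)(\phi_\e)\ge c_0>0$ uniformly, so the zeroth-order term dominates, and a maximum principle applied to the linear part yields $\|\hat K_\e\|_\infty\le C$. Second, rescaling $y=x/\e$ (i.e.\ working on the blown-up torus $(\R/(\Z/\e))^d$) converts the equation for $\tilde K_\e(t,y):=\hat K_\e(t,\e y)$ into a uniformly elliptic problem with unit-size coefficients, and since $\nabla_y\tilde K_\e=\e\,\nabla_x\hat K_\e$, standard Schauder or De Giorgi--Nash gradient estimates give $\|\e\nabla\hat K_\e\|_\infty\le C$. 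The nonlinearity $\e\mathcal N_\e$ is absorbed by a contraction/bootstrap argument valid for $\e$ sufficiently small, completing \eqref{es-He}.
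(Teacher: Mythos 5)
Your overall strategy matches the paper's: \eqref{es-H*} comes from the weighted inverse estimate of Proposition \ref{prop:Lu} applied to the exponentially decaying forcing in \eqref{linear-H}, and \eqref{es-He} comes from a maximum-principle bound on $\hat K_\e$ followed by the rescaling $y=\e x$ and standard elliptic regularity for the gradient. Two remarks on where you diverge. First, for the derivative bounds in \eqref{es-H*}: $\L$ commutes with $\nabla_x$ but \emph{not} with $\partial_\xi$ (its coefficients $\sigma(\u(\xi))$ and $B(\u(\xi))+D(\u(\xi))$ depend on $\xi$), so "differentiate the equation in $\xi$" produces commutator terms you would have to control; the paper sidesteps this because Proposition \ref{prop:Lu} already returns bounds on $\psi,\psi',\psi''$ simultaneously, and only the $x$-derivatives require applying the operator to a new forcing, which the paper does via difference quotients in $x$ (this also justifies the differentiability you are implicitly assuming). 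Second, for \eqref{es-He} the paper does not work with a single equation for $\hat K_\e$: it splits $H_{\max,\e}=\mc H_\e+\mc K_\e$ into the solution $\mc H_\e$ of the \emph{linear} problem \eqref{eq-math-H} and a nonlinear remainder $\mc K_\e$ solving \eqref{eq-math-K}, proves $\mc H_\e=O(\e)$, $\mc H_\e-\e\hat H_{1,d_\e}=O(\e^2)$ and $\mc K_\e=O(\e^2)$ by comparison with constant sub/super-solutions, crucially using that $u\mapsto B(\phi_\e)e^{u}-D(\phi_\e)e^{-u}$ is \emph{increasing} (since $B,D>0$). This monotonicity is what makes the comparison principle work without any a priori smallness of the unknown. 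Your "contraction/bootstrap" is the one genuinely under-specified step: as written it is circular, because the Taylor expansion of $e^{\pm H_{\max,\e}}$ with controlled remainder, and hence the boundedness of your source $G_\e$ and of $\mc N_\e$, already presupposes $H_{\max,\e}=O(\e)$, which is part of what you are trying to prove. You can repair this either by first running the paper's two-step splitting to get $H_{\max,\e}=O(\e)$, or by replacing the contraction with a direct comparison argument on your single equation using the same monotonicity (constant barriers $\pm C$), invoking uniqueness of $H_{\max,\e}$ to identify the bounded solution with $\hat K_\e$. With that repair your argument closes; the gradient estimate via rescaling is exactly the paper's.
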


\begin{proof}
We first note that $\hat{H}_1$ is well-defined by Proposition \ref{prop:ele}. 
The estimate \eqref{es-H*} also follows from Proposition \ref{prop:Lu}. 
Indeed, due to the assumption \eqref{as-Q} and the exponential decay estimates \eqref{ex-decay-u} and \eqref{ex-decay-v}, we can see the exponential decay of the right-hand side of \eqref{linear-H} as 
\[ \sup_{(t, x, \xi) \in [0,T] \times \T^d \times \R} e^{\tilde{\gamma} |\xi|} |\v' \Delta d + 2 \v'' \partial_\xi Q + \v' \partial_\xi^2 Q - \u' \partial_t d| < \infty \]
for any $0 < \tilde{\gamma} < \gamma$. 
Therefore, choosing $\tilde{\gamma}$ so that $\tilde{\gamma} \le \gamma_0 / 2$, where $\gamma_0$ is the constant obtained in Proposition \ref{prop:Lu}, we have 
\[ \sup_{(t, x, \xi) \in [0,T] \times \T^d \times \R} e^{\tilde{\gamma} |\xi|} \sum_{i=0}^2 |\partial_\xi^i \hat{H}_1| < \infty. \]
For the estimate of the derivative $\nabla \hat{H}_1$, we denote by $e_k$ the unit vector facing the $x_k$-axis for $k=1,2, \ldots, d$, and we consider the deviation of $\hat{H}_1$ in the direction $e_k$ as follows: for $h\in\R$,
\begin{align*} 
&f(t,x,\xi; h) := L_{\u}(\hat{H}_1(t, x+he_k, \xi) - \hat{H}_1(t, x, \xi)) \\
&\quad= \v'(\xi)\Big(\Delta d(t, x+he_k) - \Delta d(t,x)\Big) + 2\v''(\xi)\Big(\partial_\xi Q(t, x+he_k, \xi) - \partial_\xi Q(t, x, \xi)\Big) \\
&\qquad +2 \v'(\xi)\Big(\partial_\xi^2 Q(t, x+he_k, \xi) - \partial_\xi^2 Q(t,x,\xi)\Big) - \u''(\xi)\Big(\partial_t d(t, x+he_k) - \partial_t d(t,x)\Big).
\end{align*}
Since $d$ is smooth and $Q$ satisfies \eqref{as-Q}, we have by the exponential decay estimates \eqref{ex-decay-u} and \eqref{ex-decay-v} 
\[ \sup_{(t,x,\xi) \in [0,T] \times \T^d \times \R} e^{\tilde{\gamma}|\xi|} |f(t, x, \xi; h)| \le C|h| \]
for some positive constant $C>0$ and any $h \in \R$. 
Therefore, applying Proposition \ref{prop:Lu}, we have 
\[ \sup_{(t, x, \xi) \in [0,T] \times \T^d \times \R} e^{\tilde{\gamma}|\xi|} \sum_{i=0}^2 \frac{|\partial_\xi^i \hat{H}_1(t, x+he_k, \xi) - \partial_\xi^i \hat{H}_1(t, x, \xi)|}{|h|} \le \tilde{C} \]
for some positive constant $\tilde C>0$ and any $h \neq 0$. 
Letting $h \to 0$ and taking summation with respect to $k$, we obtain 
\[ \sup_{(t, x, \xi) \in [0,T] \times \T^d \times \R} e^{\tilde{\gamma}|\xi|} \sum_{i=0}^2 |\partial_\xi^i \nabla \hat{H}_1| < \infty. \]
A similar argument can be applied for the estimate of $\nabla^2 \hat{H}_1$ to obtain \eqref{es-H*}. 

We next consider the decomposition $H_{\max, \e}(t,x) = \mathcal{H}_\e(t,x) + \mathcal{K}_\e(t,x)$ with the smooth function $\mathcal{H}_\e: [0,T] \times \T^d\to\R$ solving 
\begin{equation}\label{eq-math-H} 
\nabla \cdot [2\sigma(\phi_\e) \nabla \mathcal{H}_\e] - \frac{B(\phi_\e) + D(\phi_\e)}{\e^2}\mathcal{H}_\e = -\partial_t \phi_\e + \Delta P(\phi_\e) + \frac{B(\phi_\e) - D(\phi_\e)}{\e^2} 
\end{equation}
and therefore $\mathcal{K}_\e: [0,T] \times \T^d\to\R$ satisfying 
\begin{equation}\label{eq-math-K} 
\nabla \cdot [2\sigma(\phi_\e) \nabla \mathcal{K}_\e] = \frac{B(\phi_\e) (e^{\mathcal{H}_\e + \mathcal{K}_\e} -1 - \mathcal{H}_\e) - D(\phi_\e) (e^{-\mathcal{H}_\e - \mathcal{K}_\e} - 1 + \mathcal{H}_\e)}{\e^2}. 
\end{equation}
Since $B(\phi_\e) + D(\phi_\e)$ and $\sigma(\phi_\e)$ are strictly positive, the first equation \eqref{eq-math-H} has a unique solution in $L^2(\T^d)$ for each $(t,\xi) \in [0,T] \times \R$, which is a smooth function of $(t,x,\xi)$ by the smoothness of $\phi_\e$ and the elliptic regularity (cf.\ \cite[Chapter 6]{EVANS}).
Applying \eqref{tay2}--\eqref{tay5} of Proposition \ref{prop3-1} and \eqref{2-3}, we can see that there exists $M>0$ such that 
\[ \sup_{(t,x)\in[0,T]\times\T^d} \left|-\partial_t \phi_\e + \Delta P(\phi_\e) + \frac{B(\phi_\e) - D(\phi_\e)}{\e^2}\right| \le M/\e \]
for $\e > 0$ small. 
Since $B$ and $D$ are positive, letting 
\[ c := \inf_{u \in [0,1]} \{B(u) + D(u)\} > 0, \]
we see that it holds that 
\[ \nabla \cdot [2 \sigma(\phi_\e) \nabla (\mathcal{H}_\e - M\e/c)] - \frac{B(\phi_\e) + D(\phi_\e)}{\e^2}(\mathcal{H}_\e -M\e/c) \ge 0. \]
Therefore, the maximum principle (see \cite[Theorem 3.5]{GT} for example) yields $\mathcal{H}_\e \le M\e/c$. 
Since $\mathcal{H}_\e \ge -M\e/c$ can be proved similarly, the function $\mathcal{H}_\e$ is $O(\e)$. 

Next, we prove that
\[ \nabla \cdot [ 2 \sigma(\phi_\e) \nabla (\mathcal{H}_\e - \e \hat{H}_{1, d_\e})] - \frac{B(\phi_\e) + D(\phi_\e)}{\e^2} (\mathcal{H}_\e - \e \hat{H}_{1, d_\e}) =O(1) \quad \text{as} \; \; \e \to 0. \]
To see this, using \eqref{pr-d}, we observe by the exponential decay estimates \eqref{ex-decay-u} and \eqref{es-H*} 
\begin{align*}
&\; \nabla \cdot [2 \sigma(\phi_\e) \nabla (\e \hat{H}_{1, d_\e})] - \frac{B(\phi_\e) + D(\phi_\e)}{\e^2} (\e \hat{H}_{1, d_\e})\\
=&\;  \frac{2}{\e} \left[\sigma'(\u(d_\e)) \partial_\xi \hat{H}_{1,d_\e} + \sigma(\u(d_\e)) \partial_\xi^2 \hat{H}_{1, d_\e} \right] - \frac{B(\phi_\e) + D(\phi_\e)}{\e} \hat{H}_{1, d_\e} + O(1). 
\end{align*}
We further apply \eqref{tay4} and \eqref{tay5} to obtain 
\[ \nabla \cdot [2 \sigma(\phi_\e) \nabla (\e \hat{H}_{1, d_\e})] - \frac{B(\phi_\e) + D(\phi_\e)}{\e^2} (\e \hat{H}_{1, d_\e})= \frac{1}{\e} L_{\u} \hat{H}_1 \lfloor_{\xi = d_\e} + O(1). \]
On the other hand, applying \eqref{tay2}--\eqref{tay5} and \eqref{pro-translayer} or \eqref{2-3} to \eqref{eq-math-H}, we have 
\begin{align*} 
&\; \nabla \cdot [2 \sigma(\phi_\e) \nabla \mathcal{H}_\e] - \frac{B(\phi_e) + D(\phi_\e)}{\e^2} \mathcal{H}_\e \\
=&\; \frac{1}{\e}\left(\v'(d_\e) \Delta d + 2\v''(d_\e) \partial_\xi Q_{d_\e} + \v'(d_\e) \partial_\xi^2 Q_{d_\e} - \u'(d_\e) \partial_t d\right) + O(1). 
\end{align*}
Therefore by \eqref{linear-H}, it holds that 
\begin{equation}\label{eq-order1} 
\nabla \cdot [2 \sigma(\phi_\e) \nabla (\mathcal{H}_\e - \e \hat{H}_{1, d_\e})] - \frac{B(\phi_\e) + D(\phi_\e)}{\e^2} (\mathcal{H}_\e - \e \hat{H}_{1, d_\e})=O(1).
\end{equation}
Due to a similar argument to obtain $\mathcal{H}_\e = O(\e)$, the maximum principle again yields $\mathcal H_\e -\e \hat H_{1,d_\e}=O(\e^2)$.

Representing $\hat K_\e$ as
\begin{align*}
\hat K_\e = \e^{-2}(H_{\max,\e} -\e \hat H_{1,d_\e})=\e^{-2}(\mc H_\e -\e \hat H_{1,d_\e}) + \e^{-2} \mc K_\e,
\end{align*}
in order to deduce the bound
\begin{align*}
\varlimsup_{\e\to0}\left\{\sup_{(t, x) \in [0,T] \times \T^d} |\hat{K}_\e| \right\} < \infty,
\end{align*}
it is enough to show that $\mc K_\e=O(\e^2)$. 
Recalling that $B(\phi_\e)$ and $D(\phi_\e)$ are strictly positive as assumed in {\rm (A2)} and noticing that the nonlinear term on the right-hand side of \eqref{eq-math-K} is an increasing function of $\mathcal{K}_\e$, by comparison principle it is enough to construct super- and sub-solutions of \eqref{eq-math-K} in the form $\mc K_\e^\pm=\pm C\e^2$. 
Here $C$ is a large constant determined later. 
We note that, due to $\mathcal{H}_\e = O(\e)$, we have by the Taylor expansion 
\[ e^{- \mathcal{H}_\e - \mc K_\e^+} \le e^{-\mc H_\e} = 1 - \mc H_\e + O(\e^2). \]
Therefore, by the elementary inequality $e^x\ge 1+x$ we have 
\begin{align*}
&\; \nabla \cdot [2\sigma(\phi_\e) \nabla \mathcal{K}_\e^+] - \frac{B(\phi_\e) (e^{\mathcal{H}_\e + \mathcal{K}_\e^+} -1 - \mathcal{H}_\e) - D(\phi_\e) (e^{-\mathcal{H}_\e - \mathcal{K}_\e^+} - 1 + \mathcal{H}_\e)}{\e^2}\\
\le&\;  -\frac{B(\phi_\e)}{\e^2} \mc K_\e^+ + O(1) = - B(\phi_\e) C + O(1) \le 0 
\end{align*}
if $C>0$ is chosen sufficiently large and $\e$ is small since $B(\phi_\e)$ is strictly positive. 
Similarly, we can prove that 
\[ \nabla \cdot [2 \sigma(\phi_\e) \nabla \mathcal{K}_\e^-] - \frac{B(\phi_\e) (e^{\mathcal{H}_\e + \mathcal{K}_\e^-} -1 - \mathcal{H}_\e) - D(\phi_\e) (e^{-\mathcal{H}_\e - \mathcal{K}_\e^-} - 1 + \mathcal{H}_\e)}{\e^2} \ge 0 \] 
for $C>0$ large and $\e > 0$ small. 
We thus obtain $\mc K_\e = O(\e^2)$ and $\hat{K}_\e = O(1)$. 

It remains to show
\begin{equation}\label{es-remain}
\varlimsup_{\e\to0}\left\{\sup_{(t, x) \in [0,T] \times \T^d} \e|\nabla\hat{K}_\e| \right\} < \infty.
\end{equation}
Since $\mc H_\e=O(\e)$ and $\mc K_\e=O(\e^2)$, by applying the Taylor expansion to $e^{\mathcal{H}_\e + \mathcal{K}_\e}$ and $e^{-\mathcal{H}_\e - \mathcal{K}_\e}$, it follows from \eqref{eq-math-K} that
\begin{equation}\label{eq-order2}
\nabla \cdot [\sigma(\phi_\e) \nabla \mathcal{K}_\e] - \frac{B(\phi_\e)  + D(\phi_\e) }{\e^2} \mc K_\e=O(1). 
\end{equation}
By \eqref{eq-order1} and \eqref{eq-order2}, we obtain 
\begin{align*}
\nabla \cdot [\sigma(\phi_\e) \nabla \hat{K}_\e] - \frac{B(\phi_\e)  + D(\phi_\e) }{\e^2} \hat{K}_\e=O(\e^{-2}).
\end{align*}
We here use the change of variables $y=\e x$ to define 
\[ \hat{K}_{\e, y}(t, y) := \hat{K}_\e (t, y/\e), \quad \phi_{\e, y}(t, y) := \phi_\e(t, y/\e). \]
We then obtain 
\[ \nabla_y \cdot [\sigma(\phi_{\e,y}) \nabla_y \hat{K}_{\e, y}] - [B(\phi_{\e, y})  + D(\phi_{\e, y})] \hat{K}_{\e, y}=O(1), \]
which yields by standard elliptic regularity estimates (cf.\ the estimate follows from \cite[Theorem 9.11]{GT} and the Sobolev inequality)
\[ \varlimsup_{\e \to 0} \left\{\sup_{(t,y) \in [0,T] \times \T^d_\e} |\nabla_y \hat{K}_{\e, y}|  \right\} < \infty \]
since $\hat{K}_{\e, y} = O(1)$. 
Using the change of variables $y=\e x$ again, we have \eqref{es-remain}. 
\end{proof}

\subsection{Minimizing problem} \label{sec:mini}

Let $\bar Q:[0,T]\times\T^d\times\R\to\R$ be a smooth function satisfying 
\begin{equation}\label{as-bQ}
\sup_{(t,x,\xi) \in [0,T] \times \T^d \times \R} \frac{|\bar Q| + |\partial_\xi \bar Q| + |\partial_\xi^2 \bar Q|^2}{1+|\xi|} < \infty 
\end{equation}
and define the function $F_{\bar Q}:[0,T]\times\T^d\times\R\to\R$ by
\begin{equation}\label{3-2}
F_{\bar Q}(t,x,\xi) =\u'(\xi) \partial_t d(t,x) - \v'(\xi) \Delta d(t,x) -2\v''(\xi) \partial_\xi \bar Q(t,x,\xi) - \v'(\xi) \partial_\xi^2 \bar Q(t,x,\xi).
\end{equation}
Note that, for the relation between $F_{\bar Q}$ and $\hat{H}_1$ defined by \eqref{linear-H}, it holds that $- L_{\u} \hat{H}_1 = F_Q$. 
In the proof of Theorem \ref{thm2-1}, we will need to minimize 
\begin{align}\label{3-3}
\int_\R F_{\bar Q}(-L_{\u})^{-1}F_{\bar Q} \; d\xi,
\end{align}
for each $(t,x)\in[0,T]\times\T^d$ in $\bar Q$.
Recall the definitions of $\mu, \theta$ from \eqref{2-8}, \eqref{def-mu-theta}.

\begin{proposition}\label{prop3-2}
Let $\bar Q: [0,T] \times \T^d \times \R$ be a smooth function satisfying \eqref{as-bQ}. 
Then, it holds that 
\begin{align*}
\int_\R F_{\bar Q}(-L_{\u})^{-1}F_{\bar Q} \; d\xi \ge
\dfrac{(\partial_t d - \theta\Delta d)^2}{2\mu} \quad \text{for} \; \; (t, x) \in [0,T] \times \T^d.
\end{align*}
Furthermore, a minimizer $\bar Q_{\min}$ is given by 
\begin{equation}\label{min-Q} 
\begin{aligned}
&\; \bar Q_{\min}(t,x,\xi) \\
=&\; \int_0^\xi \frac{1}{(\v')^2(\tilde{\xi})} \int_{-\infty}^{\tilde{\xi}} \left(\u'(\hat{\xi}) \partial_t d(t,x) - \v'(\hat{\xi}) \Delta d(t,x) - \frac{\lambda(t,x)}{2} L_{\u} \v'(\hat{\xi})\right) \v'(\hat{\xi}) \; d\hat{\xi} d\tilde{\xi}, 
\end{aligned}
\end{equation}
where $\lambda: [0, T] \times \T^d$ is a smooth function defined as 
\begin{equation}\label{min-lambda} 
\lambda(t,x) = \frac{2 (\|\v'\|_{L^2}^2 \Delta d(t,x) - \langle \u', \v'\rangle_{L^2} \partial_t d(t,x))}{ \langle -L_{\u} \v', \v'\rangle_{L^2}}, 
\end{equation}
and $\bar Q_{\min}$ satisfies \eqref{as-Q} replaced $Q$ by $\bar Q_{\min}$. 
\end{proposition}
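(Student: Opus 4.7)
The plan is to view the minimization, for each fixed $(t,x) \in [0,T] \times \T^d$, as a constrained quadratic optimization on an affine subspace of $L^2(\R)$, solve it by Cauchy--Schwarz in the energy inner product induced by $-\L$, and recover $\bar Q_{\min}$ from the Euler--Lagrange relation by integrating twice in $\xi$.

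First, I would decouple the $\bar Q$-dependence of $F_{\bar Q}$ from the data by writing $F_{\bar Q}(t,x,\xi) = A(t,x,\xi) - \mc B(\bar Q)(t,x,\xi)$ with $A(t,x,\xi) := \u'(\xi) \partial_t d(t,x) - \v'(\xi) \Delta d(t,x)$ and $\mc B(\bar Q) := 2\v''\partial_\xi \bar Q + \v'\partial_\xi^2 \bar Q$. The crucial algebraic identity is
\[ \mc B(\bar Q) = \frac{1}{\v'}\partial_\xi\bigl((\v')^2\partial_\xi \bar Q\bigr), \]
so that, thanks to the exponential decay of $\v'$ from Lemma \ref{lem:ODE-property} and the linear growth of $\partial_\xi \bar Q$ permitted by \eqref{as-bQ}, the boundary terms vanish and one obtains $\int_\R \v' \, \mc B(\bar Q) \, d\xi = 0$. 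Hence the admissible family $\{F_{\bar Q}\}$ lies in a fixed affine subspace of $L^2(\R)$ characterized by the single constraint
\[ \int_\R \v'(\xi) F_{\bar Q}(t,x,\xi)\, d\xi = \int_\R \v'(\xi) A(t,x,\xi)\, d\xi = \theta_1 \partial_t d(t,x) - \theta_2 \Delta d(t,x) =: r(t,x), \]
using the identities $\lan \u', \v' \ran_{L^2} = \theta_1$ and $\lan \v', \v' \ran_{L^2} = \theta_2$.

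Second, introducing the new unknown $h := (-\L)^{-1} F_{\bar Q}$ and noting that $\L$ is a self-adjoint Sturm--Liouville operator of the form $\psi \mapsto (2\sigma(\u) \psi')' - (B(\u)+D(\u)) \psi$, with $-\L$ positive-definite on the domain provided by Proposition \ref{prop:Lu}, the constraint becomes $\lan -\L \v', h\ran_{L^2} = r(t,x)$ by self-adjointness. The problem therefore reduces to minimizing $(h,h)_{*} := \lan h, -\L h\ran_{L^2}$ subject to $(\v', h)_{*} = r(t,x)$. Cauchy--Schwarz in the inner product $(\cdot, \cdot)_{*}$ yields
\[ (h,h)_{*} \ge \frac{(\v', h)_{*}^2}{(\v', \v')_{*}} = \frac{r(t,x)^2}{2\nu}, \]
with equality when $h^*(t,x,\xi) := r(t,x) \v'(\xi)/(2\nu)$. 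Since $\theta_2 = \theta \theta_1$ and $\nu = \mu \theta_1^2$, the bound simplifies to $(\partial_t d - \theta \Delta d)^2/(2\mu)$, as claimed, and the formula \eqref{min-lambda} for $\lambda(t,x)$ follows from $h^* = -\lambda \v'/2$.

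Third, I would recover $\bar Q_{\min}$ by inverting $\mc B(\bar Q_{\min}) = A - (-\L)h^* = A - \tfrac{\lambda}{2}\L \v'$: integrating $\partial_\xi\bigl((\v')^2\partial_\xi \bar Q_{\min}\bigr) = \v'\bigl(A - \tfrac{\lambda}{2} \L \v'\bigr)$ from $-\infty$ to $\xi$, dividing by $(\v')^2$, and integrating once more from $0$ to $\xi$ produces exactly \eqref{min-Q}. The main obstacle is then to verify that this explicit $\bar Q_{\min}$ satisfies the admissibility condition \eqref{as-Q}. Since $(\v')^{-2}$ blows up exponentially as $|\xi| \to \infty$, boundedness of $\partial_\xi \bar Q_{\min}$ requires that the inner integral
\[ I(\xi) := \int_{-\infty}^\xi \v'(\hat\xi)\bigl(A(t,x,\hat\xi) - \tfrac{\lambda(t,x)}{2}\L\v'(\hat\xi)\bigr)\, d\hat\xi \]
decays at the same exponential rate as $(\v'(\xi))^2$. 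Here the precise value of $\lambda$ in \eqref{min-lambda} is essential: it is designed exactly so that $I(+\infty)=0$, whence for large $\xi>0$ one may rewrite $I(\xi) = -\int_\xi^\infty \v'(A - \tfrac{\lambda}{2}\L\v')\, d\hat\xi$, whose decay is controlled by the slowest among $\u'\v'$, $(\v')^2$, and $\v'\L\v'$. Because the asymptotic decay rates of $\u'$ and $\v'$ at $\pm\infty$ coincide (both equal to $\sqrt{W''(\rho_\pm)/P'(\rho_\pm)}$, as one sees by linearizing \eqref{pro-translayer} at $\rho_\pm$), all three integrands decay like $(\v')^2$, giving $I(\xi) = O((\v'(\xi))^2)$ and hence the required boundedness of $\partial_\xi \bar Q_{\min}$. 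The bounds on $\partial_\xi^2 \bar Q_{\min}$ and on the $(t,x)$-derivatives needed for \eqref{as-Q} then follow by differentiating \eqref{min-Q} directly and repeating the same decay analysis, using the smoothness of $d$, $\partial_t d$, $\Delta d$ and $\lambda$.
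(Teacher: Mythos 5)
Your proposal is correct and follows the same overall strategy as the paper: both exploit the identity $\v'\,(2\v''\partial_\xi\bar Q + \v'\partial_\xi^2\bar Q) = \partial_\xi\big((\v')^2\partial_\xi\bar Q\big)$ to see that the $\bar Q$-dependent part of $F_{\bar Q}$ is orthogonal to $\v'$ in $L^2(\R)$, reduce to a quadratic minimization over the affine set $\{A-\psi:\psi\perp\v'\}$, identify the optimizer $\psi_{\min}=A-\tfrac{\lambda}{2}\L\v'$ with $\lambda$ fixed by the orthogonality constraint, integrate the resulting ODE twice to obtain \eqref{min-Q}, and finish with the decay analysis near $\xi=\pm\infty$. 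The one genuine difference is how the lower bound is established: the paper derives the candidate via a Lagrange-multiplier computation and then verifies optimality by expanding $G(\psi_{\min}+\psi)$ and using positivity of $(-\L)^{-1}$, whereas you set $h=(-\L)^{-1}F_{\bar Q}$ and apply Cauchy--Schwarz in the energy inner product $\lan\cdot,-\L\,\cdot\ran_{L^2}$, which gives the bound $r^2/(2\nu)=(\partial_t d-\theta\Delta d)^2/(2\mu)$ and the equality case $h^*=-\lambda\v'/2$ in one stroke; this is a modest but real streamlining. One point to tighten in your last step: the boundedness of $\partial_\xi\bar Q_{\min}=I(\xi)/(\v')^2$ requires not only the upper bound $|\v'(\xi)|\le C_{\gamma,\v}e^{-\gamma|\xi|}$ supplied by Lemma \ref{lem:ODE-property} but a matching two-sided exponential asymptotic for $\v'$, since you divide by $(\v')^2$; the paper secures this by applying l'H\^opital's rule to the explicit relations $\v'=\sqrt{2\tilde W(\u)}$, $\v''=W'(\u)P'(\u)\u'/\sqrt{2\tilde W(\u)}$ and $\v'''=W''(\u)\u'$, and your assertion that ``all three integrands decay like $(\v')^2$'' needs the analogous computation (or a stable-manifold asymptotic for $\u$ at $\rho_\pm$) to be made precise.
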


\begin{proof}
Fix $(t,x) \in [0,T] \times \T^d$. 
We use $\bar Q'$ instead of $\partial_\xi \bar Q(t,x, \xi)$ and omit the variables $t, x$ for simplicity. 
Noticing $2\v'' \bar Q' + \v' \bar Q''$ is of class $L^2(\R)$ due to the exponential decay \eqref{ex-decay-v} and the growth rate assumption \eqref{as-bQ}, and is perpendicular with $\v'$ in $L^2(\R)$, we can re-formulate the minimizing problem as 
\begin{align*}
&\; \inf \left\{\int_\R F_{\bar Q}(-L_{\u})^{-1}F_{\bar Q} \; d\xi: \text{$\bar Q$ satisfies \eqref{as-bQ}} \right\} \\
\ge &\; \inf \left\{ \langle \u' \partial_t d - \v' \Delta d - \psi, (-L_{\u})^{-1}(\u' \partial_t d - \v' \Delta d - \psi) \rangle_{L^2} : \text{$\psi \in L^2(\R)$ s.t. $\psi \perp \v'$} \right\}, 
\end{align*}
where we denote $\psi \perp \phi$ for $\psi,\phi\in L^2(\R)$ if $\lan\psi, \phi\ran_{L^2}=0$.
We note that the equality holds if a minimizer $\psi_{\min}$ for the latter minimizing problem exists and a solution $\bar Q_{\min}$ to 
\begin{equation}\label{ODE-Q} 
2 \v'' \bar Q_{\min}' + \v' \bar Q_{\min}'' = \psi_{\min}
\end{equation} 
satisfies \eqref{as-bQ}; hence, we will solve the solution $\bar Q_{\min}$ and prove that $\bar Q_{\min}$ satisfies the stronger estimate \eqref{as-Q} than \eqref{as-bQ}. 
Due to this argument, we can also see that $\bar Q_{\min}$ is a minimizer of the original minimizing problem. 

We thus define functional 
\[ G(\psi) := \langle \u' \partial_t d - \v' \Delta d - \psi, (-L_{\u})^{-1}(\u' \partial_t d - \v' \Delta d - \psi) \rangle_{L^2} \quad \text{for} \; \; \psi \in L^2(\R) \]
and consider the minimizing problem 
\begin{equation}\label{mini-pro}
\inf_{\psi \in L^2: \psi \perp \v'} G(\psi). 
\end{equation}
Applying the method of Lagrange multiplier, we see that a minimizer $\psi_{\min} \in L^2(\R)$ of \eqref{mini-pro} satisfies 
\[ \langle \phi, (-L_{\u})^{-1}(\u' \partial_t d - \v' \Delta d - \psi_{\min}) \rangle_{L^2} + \langle \u' \partial_t d - \v' \Delta d - \psi_{\min}, (-L_{\u})^{-1} \phi \rangle_{L^2} = \lambda \langle \v', \phi \rangle_{L^2} \]
for any $\phi \in L^2(\R)$, where $\lambda$ is the Lagrange multiplier, if the minimizer exists. 
Since $L_{\u}$ is self-adjoint on $L^2(\R)$, it is equivalent to 
\[ \psi_{\min} = \u' \partial_t d - \v' \Delta d - \frac{\lambda}{2} L_{\u} \v'. \]
Therefore, the orthogonal condition $\psi_{\min} \perp \v'$ shows that $\lambda$ is given by \eqref{min-lambda} if the minimizer $\psi_{\min}$ exists. 
We next prove that $\psi_{\min}$ is a minimizer of \eqref{mini-pro}. 
For this purpose, note that $\lambda$ is chosen so that $\psi_{\min} \perp \v'$ holds.
Therefore it is enough to prove $G(\psi_{\min} + \psi) \ge G(\psi_{\min})$
for any function $\psi \in L^2(\R)$ with $\psi \perp \v'$. 
By direct calculations, we have 
\[ G(\psi_{\min}) = \frac{\lambda^2}{4} \langle -L_{\u} \v', \v' \rangle_{L^2} = \frac{(\partial_t d \langle \u', \v'\rangle_{L^2} - \Delta d \|\v'\|_{L^2}^2)^2}{\langle -L_{\u} \v', \v'\rangle_{L^2}} = \frac{(\partial_t d - \theta \Delta d)^2}{2 \mu}. \]
On the other hand, since $L_{\u}$ is self-adjoint on $L^2(\R)$ and $\psi \perp \v'$ holds, we obtain 
\begin{align*} 
G(\psi_{\min} + \psi) =&\; \frac{\lambda^2}{4} \langle -L_{\u} \v', \v' \rangle_{L^2} + \frac{\lambda}{2} \left(\langle L_{\u} \v', (-L_{\u})^{-1} \psi \rangle_{L^2} - \langle \psi, \v' \rangle_{L^2} \right) + \langle \psi, (-L_{\u})^{-1} \psi \rangle_{L^2} \\
=&\; G(\psi_{\min}) + \langle \psi, (-L_{\u})^{-1} \psi \rangle_{L^2}. 
\end{align*}
Letting $\phi := (-L_{\u})^{-1} \psi$, we see 
\[ \langle \psi, (-L_{\u})^{-1} \psi \rangle_{L^2} = \int_\R 2 \sigma(\u) (\phi')^2 + [B(\u) + D(\u)] \phi^2 \; d\xi \ge 0, \]
which yields 
\[ G(\tilde{\psi}) \ge G(\psi_{\min}) = \frac{(\partial_t d - \theta \Delta d)^2}{2 \mu} \quad \text{for} \; \; \tilde{\psi} \in L^2(\R) \; \; \text{with} \; \; \tilde\psi \perp \v'. \]
Therefore, $\psi_{\min}$ is a minimizer of the minimizing problem \eqref{mini-pro}. 

We here get back to the original minimizing problem. 
Due to the exponential decay estimate \eqref{ex-decay-v}, the function $\bar Q_{\min}$ given by \eqref{min-Q} is well-defined, and it is easily seen that the function is a solution to \eqref{ODE-Q}. 
Therefore, proving the boundedness \eqref{as-Q} replaced $Q$ by $\bar Q_{\min}$ completes the proof of the proposition. 
We next prove the boundedness of 
\[ \partial_\xi \bar Q_{\min}(t,x,\xi) = \frac{1}{(\v')^2(\xi)} \int_{-\infty}^{\xi} \left(\u'(\hat{\xi}) \partial_t d(t,x) - \v'(\hat{\xi}) \Delta d(t,x) - \frac{\lambda(t,x)}{2} L_{\u} \v'(\hat{\xi})\right) \v'(\hat{\xi}) \; d\hat{\xi} \]
on $[0,T] \times \T^d \times \R$. 
Since $\v'(\xi)$ is positive for any $\xi \in \R$, we have 
\begin{equation}\label{bdd-xi-Q} 
\sup_{(t,x,\xi) \in [0,T] \times \T^d \times [-L, L]} |\partial_\xi \bar Q_{\min}(t,x,\xi)| < \infty 
\end{equation}
for any $L>0$. 
Therefore, we discuss the boundedness of $\partial_\xi \bar Q_{\min}(t,x,\xi)$ at far distance for $\xi$. 
Note that $\lim_{\xi \to \pm \infty} \partial_\xi \bar Q_{\min}(t,x,\xi)$ is an undetermined form because of $\lim_{\xi \to \pm \infty} \v'(\xi) = 0$, and we have to remove the undetermined form. 
Due to l'H\^opital's rule, we see 
\[ \lim_{\xi \to \pm \infty} \partial_\xi \bar Q_{\min}(t,x,\xi) = \lim_{\xi \to \pm \infty} \frac{\u'(\xi) \partial_t d(t,x) - \v'(\xi) \Delta d(t,x) - \frac{\lambda(t,x)}{2} L_{\u} \v'(\xi)}{2 \v''(\xi)} \]
if the limit on the right-hand side exists. 
On the other hand, taking the derivative of the identity of $\v'$ as in \eqref{ODE-v'}, we see 
\[\v'' = \dfrac{W'(\u)P'(\u)\u'}{\sqrt{2\tilde{W}(\u)}},\]
where $\tilde{W}$ is the function defined by \eqref{def-tilde-W}. 
Therefore, recalling $\v=P(\u)$ and applying l'H\^opital's rule again, we have
\begin{align*}
\lim_{\xi \to \pm \infty} \frac{\u'(\xi) \partial_t d(t,x) - \v'(\xi) \Delta d(t,x)}{2\v''(\xi)} =&\; \lim_{\xi \to \pm \infty} \frac{\sqrt{2\tilde{W}(\u)}}{2W'(\u)} \left(\frac{\partial_t d}{P'(\u)} - \Delta d \right) \\
=&\; \mp\left( \lim_{u \to \rho_\pm \mp 0} \frac{\tilde{W}(u)}{2 (W'(u))^2}\right)^{1/2} \left(\frac{\partial_t d}{P'(\rho_\pm)} - \Delta d \right) \\
=&\; \mp \left(\frac{P'(\rho_\pm)}{4 W''(\rho_\pm)}\right)^{1/2} \left(\frac{\partial_t d(t,x)}{P'(\rho_\pm)} - \Delta d(t,x) \right). 
\end{align*} 
Differentiating the ordinary differential equation in \eqref{2-3}, we obtain $\v'''=W''(\u) \u'$, which yields 
\begin{align*}
\lim_{\xi \to \pm \infty} \frac{\frac{\lambda(t,x)}{2} L_{\u} \v'(\xi)}{2 \v''(\xi)} =&\; \lim_{\xi \to \pm \infty} \frac{\lambda}{4} \left(\frac{2 \sigma'(\u)\v'' \u' + 2\sigma(\u) \v''' - [B(\u) + D(\u)] \v'}{2\v''}\right) \\
=&\; \lim_{\xi \to \pm \infty} \frac{\lambda}{4} \left(\sigma'(\u) \u' + \frac{\sqrt{2\tilde{W}(\u)}}{2W'(\u)} \left(\frac{2 \sigma(\u) W''(\u)}{P'(\u)} - [B(\u) + D(\u)] \right)\right) \\
=&\; \mp \frac{\lambda(t,x)}{4} \left(\frac{P'(\rho_\pm)}{4 W''(\rho_\pm)}\right)^{1/2} \left(\frac{2 \sigma(\rho_\pm) W''(\rho_\pm)}{P'(\rho_\pm)} - [B(\rho_\pm) + D(\rho_\pm)] \right). 
\end{align*}
Summarizing the above, the derivative $\partial_\xi \bar Q_{\min}(t,x,\xi)$ converges to a finite value as $\xi \to \pm \infty$ depending on $(t,x) \in [0,T] \times \T^d$ and the limiting-values are uniformly bounded with respect to $t$ and $x$. 
Therefore, we conclude that there exists $L>0$ such that $\partial_\xi \bar Q_{\min}(t,x,\xi)$ is bounded on $(t,x,\xi) \in [0,T] \times \T^d \times (\R \setminus (-L, L))$; hence, due to \eqref{bdd-xi-Q}, it holds that 
\[ \sup_{(t,x,\xi) \in [0,T] \times \T^d \times \R} |\partial_\xi \bar Q_{\min}(t,x,\xi)| < \infty. \]
This boundedness also implies that $\bar Q_{\min}$ growth at most linearly in $\xi$, namely, it holds that 
\[ \sup_{(t,x,\xi) \in [0,T] \times \T^d \times \R} \frac{|\bar Q_{\min}(t,x,\xi)|}{1+|\xi|} < \infty. \]
Applying these boundedness estimates to \eqref{ODE-Q}, we also have the boundedness of $\partial_\xi^2 Q_{\min}$. 
Since a similar argument can be applied to the derivatives $\nabla^i \bar Q_{\min}$ for $i=1,2$ and $\partial_t \bar Q_{\min}$, we obtain
\begin{align*} 
\sup_{(t,x,\xi) \in [0,T] \times \T^d \times \R} \Bigg\{&\left(\frac{|\partial_t \bar Q_{\min}|}{1+|\xi|} + |\partial_t \partial_\xi \bar Q_{\min}| + |\partial_t \partial_\xi^2 \bar Q_{\min}|\right) \\
&\; + \sum_{i=0}^2 \left(\frac{|\nabla^i \bar Q_{\min}|}{1+|\xi|} + |\nabla^i \partial_\xi \bar Q_{\min}| + |\nabla^i \partial_\xi^2 \bar Q_{\min}|\right) \Bigg\} < \infty, 
\end{align*}
which yields \eqref{as-Q} replaced $Q$ by $\bar Q_{\min}$. 
\end{proof}

\begin{remark}\label{rem:diff}
(i) General solutions $\bar Q_{\min}^*$ to \eqref{ODE-Q} with bounded derivative $\partial_\xi \bar Q_{\min}^*$ are represented by $\bar Q_{\min}^* = \bar Q_{\min} + C$ with arbitrary constant $C$. 

(ii) In \cite{BBP}, the case $P(u)=u/2$ has been considered. 
In this case, $\bar Q_{\min}$ has a more simple formula. In fact,
we have $\bar Q_{\min} (t,x,\xi)=A(t,x) Q^*(\xi)$, where
\begin{align*}
A(t,x)&=2 \partial_t d(t,x) - \Delta d(t,x),\\
Q^*(\xi)&=\int_0^\xi \dfrac{1}{(\u')^2(\tilde\xi)}
\int_{-\infty}^{\tilde\xi} \left(\u'(\hat{\xi})+\dfrac{\|\u'\|_{L^2}^2 L_{\u}\u'(\hat{\xi})}{\lan-L_{\u}\u', \u'\ran_{L^2}} \right)\u'(\hat\xi) \; d\hat\xi d\tilde\xi.
\end{align*}
This separability of variables for $\bar Q_{\min} (t,x,\xi)$ shows that, roughly speaking, the functional $S_\e (\phi_\e)$ is almost separable into iterated integral along the tangential direction and the normal direction of the surface $\Gamma_t$. 
In our problem, this kind of separation of variables cannot be applied; thus, we have to carefully discuss the dependence of $t$ and $x$ in the singular limit analysis. 
\end{remark}

\subsection{Proof of (1) of Theorem \ref{thm2-1}}\label{sec3-2}
To obtain a lower bound for $S_\e(\phi_\e)$, we start from the variational expression \eqref{2-5}. 
For any test function $H\in C^{1,2}([0,T]\times\T^d)$, we obtain by integrating by parts 
\begin{align*}
S_\e(\phi_\e) \ge J_\e^H(\phi_\e) =&\; \dfrac{1}{\e}\int_0^T \int_{\T^d}
\left\{H \partial_t\phi_\e - H \Delta P(\phi_\e) - \sigma(\phi_\e)|\nabla H|^2 \right\} \; dxdt\\
&\; - \dfrac{1}{\e^3}\int_0^T \int_{\T^d} \left\{ B(\phi_\e)\left(e^{H}-1\right) + D(\phi_\e) \left( e^{-H}-1\right) \right\} \; dx dt.
\end{align*}
We focus on this bound in the case $H(t,x)=\e H_1(t,x,d_\e)$, where $H_1$ is a smooth function satisfying \eqref{as-H}.
Then, it follows from
Proposition \ref{prop3-1} that
each integrand of the above expression can be written as
\begin{align*}
&\; H \partial_t\phi_\e - H \Delta P(\phi_\e) - \sigma(\phi_\e)|\nabla H|^2 \\
=&\; \left[ -\dfrac1\e\v''(d_\e)+ \u'(d_\e)\partial_td - \v'(d_\e)\Delta d - \v'''(d_\e)Q_{d_\e} - 2\v''(d_\e)\partial_\xi Q_{d_\e} -\v'(d_\e)\partial_\xi^2 Q_{d_\e} \right] H_{1, d_\e}\\
&\; -\sigma(\u(d_\e)) (\partial_\xi H_{1,d_\e})^2 
+ \tilde{\mc R}_{\e,d_\e}^{(1)} R_\e + \e \tilde{\mc R}_{\e,d_\e}^{(2)}
\end{align*}
and
\begin{align*}
&\; B(\phi_\e)\left(e^{H}-1\right) + D(\phi_\e) \left( e^{-H}-1\right) \\
=&\; \e\left[B(\u(d_\e)) - D(\u(d_\e))\right]H_{1,d_\e} + \e^2\left[B'(\u(d_\e)) - D'(\u(d_\e))\right]
\u'(d_\e)Q_{d_\e} H_{1, d_\e}\\
& +\frac{\e^2}{2}\left[B(\u(d_\e)) + D(\u(d_\e))\right]H_{1, d_\e}^2
+\e^2 \tilde{\mc R}_{\e,d_\e}^{(3)} R_\e + \e^3 \tilde{\mc R}_{\e, d_\e}^{(4)}, 
\end{align*}
where $\tilde{\mc R}_{\e, d_\e}^{(i)}$ represents $\tilde{\mc R}_\e^{(i)}(t,x,d_\e)$ with some function $\tilde{\mc R}^{(i)}_\e: [0,T] \times \T^d \times \R \to \R, i=1,2,3,4,$ satisfying 
\[ \varlimsup_{\e \to 0} \sup_{(t, x, \xi) \in [0,T] \times \T^d \times \R} e^{\gamma' |\xi|} |\tilde{\mc R}_\e^{(i)} (t, x, \xi)| < \infty \]
for some $\gamma' > 0$. 
Therefore, using \eqref{2-3} and its differential 
\begin{align*}
\v'''+ \left( B'(\u) -D'(\u) \right)\u'=0,
\end{align*}
we have 
\begin{align*}
&\; J_\e^H(\phi_\e) \\
=&\; \dfrac{1}{\e}\int_0^T \int_{\T^d} \left[\u'(d_\e)\partial_td - \v'(d_\e)\Delta d - 2\v''(d_\e)\partial_\xi Q_{d_\e} -\v'(d_\e)\partial_\xi^2 Q_{d_\e} \right] H_{1, d_\e} \\
&\; \quad -\sigma(\u(d_\e)) (\partial_\xi H_{1,d_\e})^2 - \frac{\e^2}{2}\left[B(\u(d_\e)) + D(\u(d_\e))\right]H_{1, d_\e}^2 + (\tilde{\mc R}_{\e, d_\e}^{(1)} + \tilde{\mc R}_{\e, d_\e}^{(3)}) R_\e \; dx dt \\
&\; + \int_0^T \int_{\T^d} \tilde{\mc R}_{\e, d_\e}^{(2)} + \tilde{\mc R}_{\e, d_\e}^{(4)} \; dxdt. 
\end{align*}
Due to the convergence assumption of $R_\e$ as in \eqref{as-R}, Proposition \ref{prop:con-e-int} yields 
\begin{align*}
\varliminf_{\e\to0} S_\e(\phi_\e) \ge \int_0^T \int_{\Gamma_t} \int_\R
\left\{ F_QH_1-\sigma(\u)(\partial_\xi H_1)^2 -\dfrac12\left[B(\u)+D(\u)\right]H_1^2\right\} \; d\xi d \mathcal{H}^{d-1} dt,
\end{align*}
where $F_Q$ is the function defined by \eqref{3-2}. 
Notice that the integral in $\xi$ is equal to
\begin{align*}
\lan F_Q+\dfrac{1}{2}\L H_1,H_1\ran_{L^2}.
\end{align*}
Since Proposition \ref{prop:Lu} yields that $(-L_{\u})^{-1}F_Q$ satisfies \eqref{as-H} as the estimate of $\hat{H}_1$ in the proof of Proposition \ref{prop:dec-H}, 
choosing $H_1 = (-\L)^{-1}F_Q$ and using Proposition \ref{prop3-2} and Proposition \ref{prop:dist},
we obtain
\begin{align*}
\varliminf_{\e\to0} S_\e(\phi_\e)
&\ge \dfrac12\int_0^T \int_{\Gamma_t} \int_\R F_Q(-\L)^{-1}F_Q \; d\xi d\mathcal{H}^{d-1} dt\\
&\ge \int_0^T \int_{\Gamma_t} \dfrac{(\partial_td-\theta\Delta d_t)^2}{4\mu}\; d\mathcal{H}^{d-1} dt\\
&= \int_0^T \int_{\Gamma_t} \dfrac{(v_t-\theta\kappa_t)^2}{4\mu} \; d\mathcal{H}^{d-1} dt =S_\ac(\Gamma),
\end{align*}
which completes the proof of (1) of Theorem \ref{thm2-1}.

\subsection{Proof of (2) of Theorem \ref{thm2-1}}\label{sec3-3}

We prove (2) of Theorem \ref{thm2-1} with the choice $\hat{Q} = \bar Q_{\min}$, where $\bar Q_{\min}$ is the minimizer obtained in Proposition \ref{3-2}. 
Recall the formula for $S_\e(\phi_\e)$ from \eqref{2-5}:
\begin{align*}
S_\e(\phi_\e)&=\dfrac{1}{\e} \int_0^T \int_{\T^d} \sigma(\phi_\e)|\nabla H_{\max, \e}|^2 \; dxdt\notag\\
& \quad + \dfrac{1}{\e^3} \int_0^T \int_{\T^d} B(\phi_\e)\left(1-e^{H_{\max, \e}}+H_{\max, \e}e^{H_{\max, \e}} \right) \; dxdt \notag\\
& \qquad + \dfrac{1}{\e^3} \int_0^T \int_{\T^d} D(\phi_\e) \left(1-e^{-H_{\max, \e}}-H_{\max, \e}e^{-H_{\max, \e}}\right) \; dxdt\\
& =: I_1+I_2+I_3.
\end{align*}
Due to Proposition \ref{prop:dec-H}, we can decompose $H_{\max, \e}$ as 
\[ H_{\max, \e}(t,x) = \e \hat{H}_1(t,x,d_\e) + \e^2 \hat{K}_\e(t,x) \quad \text{for} \; \; (t,x) \in [0,T] \times \T^d, \]
where $\hat{H}_1$ is the function defined by \eqref{linear-H} replaced $Q$ by $\bar Q_{\min}$.  
Due to the estimates \eqref{es-H*} and \eqref{es-He},
Proposition \ref{prop3-1} can be applied to calculate the integrant of $I_1$ as
\begin{align*}
&\; \sigma(\phi_\e)|\nabla H_{\max, \e}|^2 \\
=&\; [\sigma(\u(d_\e)) + \e \mc R_\e^{(4)}(t,x,d_\e)] \cdot [(\partial_\xi \hat{H}_{1, d_\e})^2 |\nabla d|^2 + \e \hat{\mc R}_\e^{(1)}(t,x,d_\e) + O_{\hat{K}_\e}(\e^4)] \\
=&\; \sigma(\u(d_\e))(\partial_\xi \hat{H}_{1, d_\e})^2 + \e \hat{\mc R}_\e^{(2)} (t,x,d_\e) + O_{\hat{K}_\e}(\e^4), 
\end{align*}
where $\hat{\mc R}_\e^{(i)}: [0,T] \times \T^d \times \R \to \R, i=1,2,$ represents a function satisfying 
\begin{equation}\label{reminders-ex-decay}
\varlimsup_{\e \to 0} \sup_{(t, x, \xi) \in [0,T] \times \T^d \times \R} e^{\gamma' |\xi|} |\hat{\mc R}_\e^{(i)} (t, x, \xi)| < \infty
\end{equation}
for some $\gamma' > 0$. 
Here, we have used the estimate \eqref{pr-d} to obtain the second equality.
We thus obtain 
\[ I_1 = \frac{1}{\e} \int_0^T \int_{\T^d}  \sigma(\u(d_\e))(\partial_\xi \hat{H}_{1, d_\e})^2 \; dxdt + \int_0^T \int_{\T^d} \hat{\mc R}_\e^{(2)} (t,x,d_\e) + O_{\hat{K}_\e}(\e^3) \; dxdt \]
Similarly,
Proposition \ref{prop3-1} yields 
\begin{align*}
I_2=&\; \frac{1}{\e} \int_0^T \int_{\T^d} \frac{B(\u(d_\e))}{2} (\hat{H}_{1, d_\e})^2 \; dxdt + \int_0^T \int_{\T^d} \hat{\mc R}_\e^{(3)} (t,x,d_\e) + O_{\hat{K}_\e}(\e^3) \; dxdt, \\
I_3=&\; \frac{1}{\e}\int_0^T \int_{\T^d} \frac{D(\u(d_\e))}{2} (\hat{H}_{1, d_\e})^2 \; dxdy + \int_0^T \int_{\T^d} \hat{\mc R}_\e^{(4)} (t,x,d_\e) + O_{\hat{K}_\e}(\e^3) \; dxdt,
\end{align*}
where $\hat{\mc R}^{(3)}$ and $\hat{\mc R}^{(4)}$ are functions satisfying \eqref{reminders-ex-decay}.  
Therefore, using Proposition \ref{prop:con-e-int}, we have 
\begin{align*}
\lim_{\e \to 0} S_\e(\phi_\e) =&\; \int_0^T \int_{\Gamma_t} \int_\R \sigma(\u) (\partial_\xi \hat{H}_1)^2 + \frac{B(\u) + D(\u)}{2} (\hat{H}_1)^2 \; d\xi d\mathcal{H}^{d-1} dt \\
=&\; \frac{1}{2} \int_0^T \int_{\Gamma_t} \int_\R \left\{-2\partial_\xi[ \sigma(\u) \partial_\xi \hat{H}_1] +[B(\u) + D(\u)] \hat{H}_1 \right\} \hat{H}_1 \; d\xi d\mathcal{H}^{d-1} dt \\
=&\; \frac{1}{2} \int_0^T \int_{\Gamma_t} \int_\R (-L_{\u} \hat{H}_1) \hat{H}_1 \; d\xi d\mathcal{H}^{d-1} dt. 
\end{align*}
Notice, comparing \eqref{linear-H} with the definition of $F_Q$ as in \eqref{3-2}, we have $-L_{\u} \hat{H}_1 = F_{\bar Q_{\min}}$. 
We thus see by Proposition \ref{prop3-2} and Proposition \ref{prop:dist} 
\begin{align*}
\lim_{\e \to 0} S_\e(\phi_\e) =&\; \frac{1}{2} \int_0^T \int_{\Gamma_t} \int_\R F_{\bar Q_{\min}} (-L_{\u})^{-1} F_{\bar Q_{\min}} \; d\xi d\mathcal{H}^{d-1} dt \\
=&\; \int_0^T \int_{\Gamma_t} \frac{(\partial_t d - \theta \Delta d)^2}{4 \mu} \; d\mathcal{H}^{d-1} dt = S_{\rm as}(\Gamma), 
\end{align*}
which completes the proof of (2) of Theorem \ref{thm2-1}. 

\appendix

\section{Analytic properties}\label{seca}

In this section, we summarize some properties of functions and operators related to our problem. 
We first derive an identity for $\v'$ and calculate inner products related to $\u'$ and $\v'$ as follows. 

\begin{proposition}
Let $\u: \R \to \R$ and $\v: \R \to \R$ be the solutions of \eqref{pro-translayer} and \eqref{2-3}, respectively. 
Define $\tilde{W}: \R \to \R$ by \eqref{def-tilde-W}. 
Then, it holds that 
\begin{equation}\label{ODE-v'}
\v'(\xi) = \sqrt{2 \tilde{W}(\u(\xi))} \quad \text{for} \; \; \xi \in \R
\end{equation}
and 
\begin{equation}\label{inner-uv}
\langle \u', \v' \rangle_{L^2} = \int_{\rho_-}^{\rho_+} \sqrt{2 \tilde{W}(\rho)} \; d\rho, \quad \langle \v', \v' \rangle_{L^2} = \int_{\rho_-}^{\rho_+} P(\rho)\sqrt{2\tilde{W}(\rho)} \; d\rho. 
\end{equation}
\end{proposition}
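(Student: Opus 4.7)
The plan is to obtain \eqref{ODE-v'} as a first integral of the ODE \eqref{2-3}, and then derive \eqref{inner-uv} by a change of variables $\rho = \u(\xi)$.

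First I would rewrite the ODE for $\v$. Since $\v = P(\u)$ and $B - D = -W'$ by assumption (A2), the ODE \eqref{2-3} reads
\[
\v''(\xi) = D(\u(\xi)) - B(\u(\xi)) = W'(\u(\xi)).
\]
Multiplying both sides by $\v'(\xi) = P'(\u(\xi))\u'(\xi)$ and integrating yields
\[
\tfrac{d}{d\xi}\!\left(\tfrac{1}{2}(\v'(\xi))^{2}\right) = W'(\u(\xi))P'(\u(\xi))\u'(\xi) = \tfrac{d}{d\xi}\tilde W(\u(\xi)),
\]
where I have used the definition \eqref{def-tilde-W} of $\tilde W$ together with the chain rule. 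Hence $\tfrac{1}{2}(\v')^2 - \tilde W(\u)$ is constant on $\R$.

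Next I would fix the constant of integration by sending $\xi \to -\infty$. The exponential decay \eqref{ex-decay-v} gives $\v'(-\infty) = 0$, while by definition $\tilde W(\rho_-) = 0$ and $\u(-\infty) = \rho_-$; therefore the constant is $0$. Because $\v'(\xi) > 0$ on $\R$ (Lemma \ref{lem:ODE-property}(2)) and $\tilde W(\u(\xi)) \ge 0$ on $[\rho_-, \rho_+]$ by the $P$-balance condition (A4) together with the double-well shape of $W$ (A3), taking the positive square root yields \eqref{ODE-v'}. As a consistency check, the limit $\xi \to +\infty$ gives $\tilde W(\rho_+) = 0$, which is precisely \eqref{2-1}.

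For the inner products, I would use that $\u$ is a smooth strictly-increasing diffeomorphism from $\R$ onto $(\rho_-, \rho_+)$, so the substitution $\rho = \u(\xi)$, $d\rho = \u'(\xi)\,d\xi$ is valid. For the first identity,
\[
\langle \u', \v' \rangle_{L^2} = \int_{\R} \v'(\xi)\, \u'(\xi)\, d\xi = \int_{\rho_-}^{\rho_+} \v'(\u^{-1}(\rho))\, d\rho = \int_{\rho_-}^{\rho_+} \sqrt{2\tilde W(\rho)}\, d\rho,
\]
after applying \eqref{ODE-v'}. For the second identity, writing one factor of $\v'$ as $P'(\u)\u'$ and then making the same substitution gives
\[
\langle \v', \v' \rangle_{L^2} = \int_{\R} P'(\u(\xi))\, \v'(\xi)\, \u'(\xi)\, d\xi = \int_{\rho_-}^{\rho_+} P'(\rho)\sqrt{2\tilde W(\rho)}\, d\rho,
\]
which matches the definition of $\theta_2$ in \eqref{2-8}.

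I do not anticipate any serious obstacle: the only delicate points are the justification that the constant of integration at $-\infty$ vanishes (which follows from the exponential decay \eqref{ex-decay-v} and the convention $\tilde W(\rho_-)=0$ built into \eqref{def-tilde-W}) and the sign choice of the square root (which is fixed by the strict positivity of $\v'$ in Lemma \ref{lem:ODE-property}(2)). The $P$-balance condition \eqref{2-1} never enters the derivation of \eqref{ODE-v'} directly, but it is what guarantees $\tilde W(\rho_+) = 0$, making the first integral consistent with $\v'(+\infty)=0$.
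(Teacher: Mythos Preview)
Your proof is correct and follows essentially the same route as the paper: multiply $\v'' = W'(\u)$ by $\v'$, integrate using the boundary data at $-\infty$, take the positive square root, and then change variables $\rho = \u(\xi)$ for the inner products. Note that your final expression $\int_{\rho_-}^{\rho_+} P'(\rho)\sqrt{2\tilde W(\rho)}\,d\rho$ for $\langle \v',\v'\rangle_{L^2}$ is the correct one (matching $\theta_2$ in \eqref{2-8}); the $P(\rho)$ appearing in the statement \eqref{inner-uv} is a typo in the paper.
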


\begin{proof}
Since $\u$ and $\v$ satisfy 
\[ \v''(\xi) = W'(\u(\xi)) \quad \text{for} \; \; \xi \in \R, \quad \lim_{\xi \to -\infty} \v'(\xi) = 0, \quad \lim_{\xi \to -\infty} \u(\xi) = \rho_-, \]
multiplying $\v'$ by its ordinary differential equation and integrating it, we have by $\v = P(\u)$ 
\begin{align*} 
\frac{1}{2} (\v'(\xi))^2 =&\; \int_{-\infty}^{\xi} W'(\u(\tilde{\xi})) \v'(\tilde{\xi}) \; d\tilde{\xi} =\int_{- \infty}^{\xi} W'(u(\tilde{\xi})) P'(u(\tilde{\xi})) \u'(\tilde{\xi}) \; d\tilde{\xi} \\
=&\; \int_{\rho_-}^{\u(\xi)} W'(\rho) P'(\rho) \; d\rho = \tilde{W}(\u(\xi)) \quad \text{for} \; \; \xi \in \R. 
\end{align*}
Due to $\v' > 0$, the identity above shows \eqref{ODE-v'}. 
Furthermore, due to $\u(\xi) \to \rho_\pm$ as $\xi \to \pm \infty$, \eqref{ODE-v'} yields 
\[ \langle \u', \v' \rangle_{L^2} = \int_\R \u'(\xi) \sqrt{2 \tilde{W}(\u(\xi))} \; d\xi = \int_{\rho_-}^{\rho_+} \sqrt{2 \tilde{W}(\rho)} \; d\rho \]
and 
\[ \langle \v', \v' \rangle_{L^2} = \int_\R P'(\u(\xi)) \u'(\xi) \sqrt{2 \tilde{W}(\u(\xi))} \; d\xi = \int_{\rho_-}^{\rho_+} P(\rho)\sqrt{2\tilde{W}(\rho)} \; d\rho, \]
which complete the calculation of \eqref{inner-uv}. 
\end{proof}

We next discuss properties of the linear operator $L_{\u}$ introduced in \eqref{def-L}. 
Let $\mc H_k=\mc H_k(\R)$ be the Sobolev space of order $k=1,2$ on $\R$:
\begin{align*}
\mc H_1=\left\{\psi\in L^2(\R): \int_\R \psi(\xi)^2+\psi'(\xi)^2 d\xi <\infty \right\}
\quad \text{and} \quad \mc H_2=\left\{\psi\in \mc H_1: \psi'\in \mc H_1 \right\}.
\end{align*}
Let $D(L_{\u})$ be the set of all function $\psi$ such that
there exists an $L^2(\R)$-function $\phi\in \mc H_1$ satisfying
\begin{align*}
-\lan 2\sigma(\u)\psi', \varphi' \ran_{L^2} - \lan [B(\u)+D(\u)]\psi, \varphi \ran_{L^2} = \lan \phi, \varphi \ran_{L^2},
\end{align*}
for any $\varphi\in \mc H_1$.
Note that such $\phi\in L^2(\R)$ is unique for any given $\psi\in D(L_{\u})$.
Therefore the application $\psi\in D(L_{\u})\mapsto \phi\in L^2(\R)$ is well-defined.
We denote this linear operator by $L_{\u}$. Moreover, one can easily see that $D(\L)=\mc H_2$.

The linear operator $(L_{\u}, D(L_{\u}))$ enjoys the following properties.

\begin{proposition}\label{prop:ele}
The linear operator $(L_{\u}, D(L_{\u}))$ is bijective, self-adjoint on $L^2(\R)$
and has a bounded inverse.
\end{proposition}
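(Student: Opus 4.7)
The plan is to realize $-L_{\u}$ as the operator associated with a symmetric, continuous, coercive bilinear form on $\mc H_1$, and then obtain all three properties (bijectivity, self-adjointness, and boundedness of the inverse) by combining Lax--Milgram with a standard symmetry argument.

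First, I would establish uniform two-sided bounds on the coefficients. Since $\u$ is smooth, strictly increasing, and satisfies $\u(\pm\infty)=\rho_\pm$, the range of $\u$ is contained in the compact interval $[\rho_-,\rho_+]\subset(0,1)$. Because $\sigma$ is smooth and positive on $[0,1]$ and $B+D$ is positive on $[0,1]$ by (A2), there exist constants $c_1,c_2,C_1,C_2>0$ with
\[ c_1 \le 2\sigma(\u(\xi)) \le C_1, \qquad c_2 \le B(\u(\xi))+D(\u(\xi)) \le C_2 \quad \text{for every } \xi\in\R. \]
Define the symmetric bilinear form
\[ a(\psi,\varphi) := \lan 2\sigma(\u)\psi',\varphi'\ran_{L^2} + \lan [B(\u)+D(\u)]\psi,\varphi\ran_{L^2}, \qquad \psi,\varphi\in\mc H_1. \]
The bounds above give both continuity $|a(\psi,\varphi)|\le C_0\|\psi\|_{\mc H_1}\|\varphi\|_{\mc H_1}$ and coercivity $a(\psi,\psi)\ge c_0\|\psi\|_{\mc H_1}^{2}$ with $c_0 := \min\{c_1,c_2\}>0$. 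By the definition of $D(L_{\u})$ recalled in the text, the relation $L_{\u}\psi=\phi$ is precisely $a(\psi,\varphi) = -\lan\phi,\varphi\ran_{L^2}$ for every $\varphi\in\mc H_1$.

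Next, I would apply the Lax--Milgram theorem to $a$. For any $\phi\in L^2(\R)$, the functional $\varphi\mapsto-\lan\phi,\varphi\ran_{L^2}$ is continuous on $\mc H_1$, so there is a unique $\psi\in\mc H_1$ with $a(\psi,\varphi)=-\lan\phi,\varphi\ran_{L^2}$ for all $\varphi\in\mc H_1$. This shows that $-L_{\u}\colon D(L_{\u})\to L^2(\R)$ is surjective and injective. Moreover, coercivity gives $c_0\|\psi\|_{\mc H_1}^{2}\le a(\psi,\psi) = -\lan\phi,\psi\ran_{L^2}\le\|\phi\|_{L^2}\|\psi\|_{L^2}$, which yields $\|\psi\|_{L^2}\le c_0^{-1}\|\phi\|_{L^2}$. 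Hence $L_{\u}^{-1}\colon L^2(\R)\to L^2(\R)$ is bounded with norm at most $c_0^{-1}$. To identify $D(L_{\u})$ with $\mc H_2$, I would use standard interior elliptic regularity: since the coefficients $\sigma(\u)$ and $B(\u)+D(\u)$ are smooth and bounded, the weak solution $\psi\in\mc H_1$ satisfies, in a distributional sense,
\[ \psi'' = \frac{1}{2\sigma(\u)}\Big(\phi + [B(\u)+D(\u)]\psi - 2\sigma'(\u)\u'\psi'\Big), \]
and the right-hand side lies in $L^2(\R)$, so $\psi\in\mc H_2$; conversely, any $\psi\in\mc H_2$ belongs to $D(L_{\u})$ by integration by parts.

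Finally, self-adjointness follows from symmetry plus bijectivity. The form $a$ is symmetric, so $\lan L_{\u}\psi,\varphi\ran_{L^2}=\lan\psi,L_{\u}\varphi\ran_{L^2}$ for $\psi,\varphi\in D(L_{\u})$, meaning $L_{\u}$ is symmetric. A densely defined symmetric operator whose range is all of $L^2(\R)$ (equivalently, with $0$ in its resolvent set) is automatically self-adjoint. The main technical step I would be most careful with is elliptic regularity on the whole line to ensure $D(L_{\u})=\mc H_2$; the uniform ellipticity and smoothness of the coefficients make this standard, but it is the only place where one leaves the purely Hilbert-space framework of Lax--Milgram.
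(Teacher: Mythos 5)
Your proposal is correct and follows essentially the same route as the paper: the paper's surjectivity argument via the Riesz representation theorem on the Hilbert space $\mc H_{\L}$ equipped with the form-induced inner product is precisely the symmetric case of Lax--Milgram, the bounded inverse comes from the same coercivity estimate, and your appeal to ``symmetric plus surjective implies self-adjoint'' is exactly the argument the paper writes out by hand when showing $\L^*\subset\L$. The only cosmetic difference is that you sketch the elliptic-regularity identification $D(L_{\u})=\mc H_2$, which the paper simply asserts.
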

\begin{proof}
The proof is elementary but we give a complete proof for the reader's convenience.

Let $\psi\in D(L_{\u})$ and note that
\begin{align*}
\lan -L_{\u}\psi, \psi \ran_{L^2}
= \lan 2\sigma(\u)\psi', \psi'\ran_{L^2} + \lan [B(\u)+D(\u)]\psi, \psi\ran_{L^2}.
\end{align*}
Since $B(\u)+D(\u)$ is bounded away from $0$, there exists a constant $C_0>0$ such that
\begin{align}\label{ele-ine}
\|\psi\|_{L^2}^2 \le C_0 \lan -L_{\u}\psi, \psi \ran_{L^2}.
\end{align}
Therefore if $L_{\u}\psi=0$, then $\psi=0$, which implies that $L_{\u}$ is injective.
To show that $L_{\u}$ is surjective,
we define the Hilbert space $\mc H_{\L}$,
which consists of all functions $\psi\in \mc H_1$ and its inner product $\lan\cdot,\cdot\ran_{\mc H_{\L}}$ is given by
\begin{align*}
\lan \psi, \phi\ran_{\mc H_{\L}} = \lan 2\sigma(\u)\psi', \phi' \ran_{L^2} + \lan [B(\u)+D(\u)]\psi, \phi \ran_{L^2}, \quad \psi,\phi\in \mc H_{\L}.
\end{align*}
Note that the corresponding norm is equivalent to the $\mc H_1$-Sobolev norm $\|\cdot\|_{\mc H_1}$. 
Fix $w\in L^2(\R)$. Define the functional $I_w$ on $\mc H_{\L}$ by
$I_w(\varphi)=-\lan \varphi, w\ran_{L^2}, \varphi\in \mc H_{\L}$.
Since there exists a constant $C_0>0$ such that
\[|I_w(\varphi)|\le\| w\|_{L^2}\|\varphi\|_{L^2}\le C_0 \| w\|_{L^2}\|\varphi\|_{\mc H_{\L}},\]
for any $\varphi\in \mc H_{\L}$,
$I_w$ is continuous on $\mc H_{\L}$. By Risez's representation theorem,
there exists $\psi\in \mc H_{\L}$ such that for any $\varphi\in\mc H_{\L}$ it holds that
\begin{align*}
\lan \psi, \varphi \ran_{\mc H_{\L}}= I_w(\varphi) = -\lan \varphi, w\ran_{L^2}.
\end{align*}
This implies that $\psi\in D(\L)$ and $\L\psi=w$.
Therefore $L_{\u}:D(\L)\to L^2(\R)$ is surjective.

We next show that $(L_{\u}, D(L_{\u}))$ is self-adjoint on $L^2(\R)$.
Let $\L^*$ be the adjoint operator of $\L$  on $L^2(\R)$ and $D(\L)$ its domain.
Since $D(\L)=\mc H_2$, an integration by parts shows
$\lan\L\psi, \phi\ran_{L^2}=\lan \psi, \L\phi\ran_{L^2}$
for any $\psi, \phi\in D(\L)$. Therefore $\L\subset \L^*$.
Conversely, take any $\varphi\in D(\L^*)$.
By the definition of the adjoint operator $\L^*$,
it holds that for any $\psi\in D(\L)$
\begin{align}\label{ele-ine1}
\lan \L\psi, \varphi\ran_{L^2}=\lan\psi, \L^*\varphi\ran_{L^2}.
\end{align}
On the other hand, letting $w=\L^*\varphi$ in the previous paragraph,
there exists $\phi\in \mc H_{\L}$ such that for any $\psi\in\mc H_{\L}$ it holds that
\begin{align}\label{ele-ine2}
\lan \phi, \psi \ran_{\mc H_{\L}}= I_{\L^*\varphi}(\psi) = -\lan \psi, \L^*\varphi\ran_{L^2}.
\end{align}
This implies that $\phi\in D(\L)$ and $\L\phi=\L^*\varphi$.
It follows from the definition of $\lan\cdot,\cdot\ran_{\mc H_{\L}}$, \eqref{ele-ine1} and \eqref{ele-ine2} that for any $\psi\in D(\L)$
\begin{align*}
\lan\L\psi, \phi\ran_{L^2}=-\lan\psi,\phi\ran_{\mc H_{\L}}
=\lan \psi, \L^*\varphi\ran_{L^2}=\lan \L\psi, \varphi\ran_{L^2}.
\end{align*}
Since $\L$ is surjective, we have $\phi=\varphi$. Therefore $\L^*\subset\L$ and
$\L$ is self-adjoint on $L^2(\R)$.

Since $\L$ is a bijection from $D(\L)$ to $L^2(\R)$, $\L$ has an inverse operator. 
The boundedness of the inverse operator immediately follows from \eqref{ele-ine},
which completes the proof of Proposition \ref{prop:ele}.
\end{proof}

We further present the following estimate. 

\begin{proposition}\label{prop:Lu}
There exist constants $C> 0$ and $\gamma_0 > 0$ such that if $w: \R \to \R$ and $\gamma'>0$ satisfies 
\[ \sup_{\xi \in \R} |w(\xi)| e^{\gamma'|\xi|} < \infty, \]
then $\psi = \L^{-1} w$ satisfies 
\begin{equation}\label{Lu-ine} 
\sup_{\xi \in \R} \{|\psi(\xi)| + |\psi'(\xi)| + |\psi''(\xi)|\} e^{\tilde{\gamma}'|\xi|}
\le C \sup_{\xi \in \R} |w(\xi)| e^{\gamma'|\xi|}, 
\end{equation}
where $\tilde{\gamma}' = \min\{\gamma_0/2, \gamma'\}$. 
\end{proposition}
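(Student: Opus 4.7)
The plan is to combine a maximum-principle bound for $\psi = \L^{-1}w$ with an exponential barrier argument keyed to the constant-coefficient limits of $\L$ at $\pm\infty$, and then to recover the derivative estimates directly from the ODE. Set $M := \sup_\xi |w(\xi)|e^{\gamma'|\xi|}$. As a first step I would prove the uniform bound $\|\psi\|_{L^\infty} \le CM$ by rewriting $\L\psi = w$ as $-2\sigma(\u)\psi'' - 2\sigma'(\u)\u'\psi' + [B(\u) + D(\u)]\psi = -w$ and applying the weak maximum principle: at any point where $\psi$ attains a positive interior maximum one has $\psi'=0$ and $\psi'' \le 0$, so $[B(\u)+D(\u)]\psi \le |w|$, and $B(\u)+D(\u)$ is uniformly bounded away from $0$ by (A2). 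The analogous bound from below follows upon replacing $\psi$ by $-\psi$.

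Next I would construct an exponential barrier. Define
\begin{equation*}
a_\pm := \sqrt{\frac{B(\rho_\pm) + D(\rho_\pm)}{2\sigma(\rho_\pm)}}, \qquad \gamma_0 := \min\{a_-, a_+\},
\end{equation*}
and take the smooth symmetric weight $\varphi(\xi) := e^{-\tilde\gamma'\sqrt{1+\xi^2}}$. A direct computation, using $\varphi''/\varphi \to \tilde\gamma'^{2}$ and $\varphi'/\varphi \to \mp\tilde\gamma'$ as $\xi \to \pm\infty$, together with $\u(\xi) \to \rho_\pm$ and $\u'(\xi) \to 0$ exponentially fast by Lemma~\ref{lem:ODE-property}, gives
\begin{equation*}
\frac{\L\varphi(\xi)}{\varphi(\xi)} \longrightarrow 2\sigma(\rho_\pm)\bigl(\tilde\gamma'^{2} - a_\pm^{2}\bigr) < 0, \qquad \xi \to \pm\infty,
\end{equation*}
since $\tilde\gamma' \le \gamma_0/2 < a_\pm$. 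Hence there exist $R>0$ and $c_0>0$, depending only on the data (and not on $w$ or $\tilde\gamma' \in (0,\gamma_0/2]$), such that $\L\varphi \le -c_0\varphi$ for all $|\xi| \ge R$.

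The comparison step then runs on $\{|\xi| \ge R\}$. Put $v_\pm := \pm\psi - K\varphi$ and choose $K := C'M$ large enough that both $K\varphi(R) \ge \|\psi\|_{L^\infty}$ (so that $v_\pm \le 0$ on $\{|\xi| = R\}$) and $Kc_0 \ge M$. Using $\gamma' \ge \tilde\gamma'$ and $\varphi(\xi) \ge c_1 e^{-\tilde\gamma'|\xi|}$, we find
\begin{equation*}
\L v_\pm = \pm w - K\L\varphi \ge -|w| + Kc_0\varphi \ge \bigl(-M + Kc_0 c_1\bigr)e^{-\tilde\gamma'|\xi|} \ge 0
\end{equation*}
on $\{|\xi| \ge R\}$, while $v_\pm \to 0$ as $|\xi|\to\infty$. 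The weak maximum principle for $-\L$, whose zeroth-order coefficient $B(\u)+D(\u)$ is strictly positive, precludes a positive interior maximum; combined with the boundary and asymptotic control, this forces $v_\pm \le 0$, so $|\psi(\xi)| \le CM e^{-\tilde\gamma'|\xi|}$ for $|\xi| \ge R$. The compact interval $[-R,R]$ is absorbed into the constant via the Step~1 bound.

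To pass from $\psi$ to $\psi'$ and $\psi''$, I would read off the ODE: since
\begin{equation*}
\psi''(\xi) = \frac{1}{2\sigma(\u(\xi))}\Bigl(w(\xi) + [B(\u) + D(\u)]\psi(\xi) - 2\sigma'(\u)\u'(\xi)\psi'(\xi)\Bigr),
\end{equation*}
any exponential control on $\psi'$ yields the same on $\psi''$. For $\psi'$, view the equation on each unit interval $[\xi_0, \xi_0+1]$ as a linear second-order ODE with coefficients bounded uniformly in $\xi_0$ and inhomogeneity of size $O(e^{-\tilde\gamma'|\xi_0|})$; a standard interior $C^1$ estimate (variation of parameters, or one-dimensional Schauder) then gives $\sup_{[\xi_0,\xi_0+1]}|\psi'| \le C\sup_{[\xi_0, \xi_0+1]}(|\psi| + |w|)$, of the required exponential order. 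The main technical hurdle is verifying the barrier inequality $\L\varphi \le -c_0 \varphi$ for large $|\xi|$ with constants uniform over $\tilde\gamma' \in (0,\gamma_0/2]$; the loss factor $1/2$ in $\gamma_0/2$ is precisely the slack needed to absorb the error from $\u(\xi)-\rho_\pm$, $\u'(\xi)$ and the smoothing of $|\xi|$ near the origin, while the $\min$ with $\gamma'$ is required so that $\varphi$ dominates $|w|$ pointwise.
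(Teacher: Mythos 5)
Your argument is correct, but it takes a genuinely different route from the paper. The paper proves the weighted bound by an energy method: it tests $\L\psi=w$ against $a\psi$ for exponential weights $a_\gamma(\xi)=e^{\gamma\xi}$, using an approximating family of truncated weights $b_n$ (with uniformly bounded derivatives) and Fatou's lemma to justify the manipulation, obtains the weighted $\mc H_1$ estimate $\|\psi e^{\tilde\gamma'\xi}\|_{\mc H_1}\le C\|we^{\gamma'\xi}\|_{L^2}$, and then upgrades to pointwise $C^2$ control via the equation and the Sobolev embedding; there $\gamma_0$ is any constant small enough that $2c_0^{-1}[\sigma'(\u)\u'\gamma+\sigma(\u)\gamma^2]\le 1/2$. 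You instead work pointwise: an $L^\infty$ bound from the weak maximum principle (valid because $B(\u)+D(\u)$ is uniformly positive and $\psi\in\mc H_2$ vanishes at infinity), an explicit supersolution $e^{-\tilde\gamma'\sqrt{1+\xi^2}}$ whose admissible decay rates are governed by the characteristic roots $a_\pm=\sqrt{[B(\rho_\pm)+D(\rho_\pm)]/2\sigma(\rho_\pm)}$ of the limiting constant-coefficient operators, and a comparison argument on $\{|\xi|\ge R\}$; derivatives then come from local one-dimensional ODE (mean value theorem plus Gronwall) estimates rather than global Sobolev embedding. Your route yields a more explicit and arguably more natural $\gamma_0$ (tied to the actual decay rates of the homogeneous problem at $\pm\infty$), while the paper's energy method stays entirely in the weak formulation and does not need classical pointwise regularity to run the maximum principle. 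Two minor points to tighten: the interior $C^1$ estimate should be stated on a slightly enlarged interval (or justified, as you can in one dimension, by the mean value theorem to find a point where $|\psi'|\le 2\sup|\psi|$ followed by Gronwall on the unit interval), and the maximum-principle steps implicitly use that $\psi$ is $C^2$, which follows by bootstrapping $\psi''=(w+[B(\u)+D(\u)]\psi-2\sigma'(\u)\u'\psi')/2\sigma(\u)$ from $\psi\in\mc H_2$ when $w$ is continuous — the only case needed in the paper. Neither affects the validity of the proof.
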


\begin{proof}
Fix a smooth function $a:\R\to[0,\infty)$ whose derivatives are bounded
and $w\in L^2(\R)$ and set $\psi = \L^{-1} w$.
We start by observing
\begin{align*}
\lan -aw, \psi\ran_{L^2}
&= \lan -aL_{\u}\psi, \psi \ran_{L^2} \\
&= \lan 2\sigma(\u)\psi', (a\psi)'\ran_{L^2} + \lan a[B(\u)+D(\u)]\psi, \psi\ran_{L^2}\\
&= \lan 2\sigma(\u)\psi', a'\psi\ran_{L^2} + \lan 2\sigma(\u)\psi', a\psi'\ran_{L^2} + \lan a[B(\u)+D(\u)]\psi, \psi\ran_{L^2}.
\end{align*}
Since $2\psi'\psi=(\psi^2)'$, the integration by parts on the first term shows
\begin{align*}
\lan 2\sigma(\u)\psi', a'\psi\ran_{L^2} = -\lan(\sigma(\u)a')', \psi^2 \ran_{L^2} = -\lan\sigma'(\u)\u'a', \psi^2 \ran_{L^2} -\lan\sigma(\u)a'', \psi^2 \ran_{L^2}.
\end{align*}
Therefore, since $\sigma, B,D$ are away from $0$, we have 
\begin{align*}
\lan -aw, \psi\ran_{L^2} \ge \lan c_0a - \sigma'(\u)\u'a' - \sigma(\u)a'', \psi^2 \ran_{L^2} + c_0\lan a\psi', \psi'\ran_{L^2},
\end{align*}
for some $c_0>0$.
On the other hand, due to Young's inequality, it holds that 
\begin{align*}
\lan -aw, \psi\ran_{L^2}\le \dfrac{1}{2c_0}\lan aw,w\ran_{L^2} + \dfrac{c_0}{2}\lan a\psi, \psi \ran_{L^2}.
\end{align*}
Consequently, we have shown that 
\begin{equation}\label{Lu-ine1}
\lan a - 2c_0^{-1}[\sigma'(\u)\u'a' + \sigma(\u)a''], \psi^2 \ran_{L^2} + 2 \lan a\psi', \psi'\ran_{L^2}
\le c_0^{-2}\lan aw,w\ran_{L^2}.
\end{equation}

For each $\gamma\in\R$, define $a_\gamma(\xi)=e^{\gamma\xi}$.
We construct smooth functions $\{b_n:\R\to[0,\infty)\}_{n\in\N}$ satisfying
\begin{itemize}
\item[(i)] for each $n\in\N$, all derivatives of $b_n$ are bounded;
\item[(ii)] for any $\xi\in\R$ and $n\in\N$, $b_n(\xi)\le a_\gamma(\xi)$;
\item[(iii)] there exists $\gamma_0>0$ such that for any $n\in\N$ and $0<\gamma\le \gamma_0$, it holds that
\begin{align*}
b_n - 2c_0^{-1}[\sigma'(\u)\u'b_n' + \sigma(\u)b_n'']\ge 0;
\end{align*}
\item[(iv)] for any $\xi\in\R$ and $k=0,1,2$, $\lim_{n\to\infty}b_n^{(k)}(\xi)=a_\gamma^{(k)}(\xi)$.
\end{itemize}
For this purpose, fix a smooth function $f:[0,1]\to[0,1]$ satisfying
$f(0)=1, f(1)=0$ and $f^{(k)}(0)=f^{(k)}(1)=0$ for any $k\in\N$.
For each $n\in\N$, let $\delta_n:\R\to[0,\gamma]$ be the smooth function defined by
\begin{align*}
\delta_n(\xi)=
\begin{cases}
\gamma, \quad \xi\le n,\\
\gamma f\left((\xi-n)/n\right), \quad n \le \xi \le 2n,\\
0, \quad \xi \ge 2n,
\end{cases}
\end{align*}
and define $b_n(\xi)=e^{\delta_n(\xi)\xi}$.
Then it is clear that (i), (ii) and (iv) hold. It remains to check (iii).

Note that $b_n$ coincides with $a_\gamma$ on $(-\infty,n]$ and
\begin{align*}
a_\gamma - 2c_0^{-1}[\sigma'(\u)\u'a_\gamma' + \sigma(\u)a_\gamma''] = \left\{ 1 - 2c_0^{-1}[ \sigma'(\u)\u'\gamma + \sigma(\u)\gamma^2]\right\}a_\gamma.
\end{align*}
Since $\sigma, \sigma', \u'$ are bounded, if we take a sufficiently small $\gamma_0>0$,
for any $0<\gamma\le\gamma_0$ the right-hand side is larger than or equal to $(1/2)a_\gamma$.
Hence for any $n\in\N$ and $0<\gamma\le \gamma_0$ (iii) holds on $(-\infty,n]$.
It is clear that (iii) holds on $[2n,\infty)$. By elementary calculations, we have
\begin{align*}
&b_n'(\xi)=(\delta_n'(\xi)\xi+\delta_n(\xi))b_n(\xi),\\
&b_n''(\xi)=\left\{ \delta_n''(\xi)\xi+2\delta_n'(\xi) + (\delta_n'(\xi)\xi+\delta_n(\xi))^2 \right\} b_n(\xi),\\
&\sup_{n \le \xi \le 2n} |\delta_n(\xi)| = \gamma, \quad \sup_{n\le \xi \le 2n}\left|\delta_n'(\xi)\right|=\dfrac{\gamma}{n}\|f'\|_\infty, \quad
\sup_{n\le \xi \le 2n}\left|\delta_n''(\xi)\right|=\dfrac{\gamma}{n^2}\|f''\|_\infty.
\end{align*}
Therefore, re-choosing $\gamma_0$ small if necessary, we have 
\[ 2c_0^{-1}[\sigma'(\u)\u'b_n' + \sigma(\u)b_n''] \le b_n \]
for any $n \in \mathbb{N}$ and $0 < \gamma \le \gamma_0$, which yields (iii) on $[n,2n]$.

Define $\tilde{\gamma}' := \min\{\gamma_0/2, \gamma'\}$ and let $\{b_n:\R\to[0,\infty)\}_{n\in\N}$ be smooth functions satisfying properties (i)--(iv) with $\gamma = 2\tilde{\gamma}'$. 
By (i), we can substitute $a = b_n$ into \eqref{Lu-ine1} to obtain 
\begin{align*}
\lan b_n - 2c_0^{-1}[\sigma'(\u)\u'b_n' - \sigma(\u)b_n''], \psi^2 \ran_{L^2} + \lan b_n\psi', \psi'\ran_{L^2}
\le c_0^{-2}\lan b_nw,w\ran_{L^2}.
\end{align*}
By (ii), the right-hand side is bounded above from $c_0^{-2}\lan a_{2\tilde{\gamma}'} w,w\ran_{L^2}$.
On the other hand, by (iii), (iv) and Fatou's lemma, we have
\begin{align*}
\lan a_{2\tilde{\gamma}'} - 2c_0^{-1}[\sigma'(\u)\u'a_{2\tilde{\gamma}'}' - \sigma(\u)a_{2\tilde{\gamma}'}''], \psi^2 \ran_{L^2} + 2 \lan a_{2\tilde{\gamma}'}\psi', \psi'\ran_{L^2}
\le c_0^{-2} \lan a_{2\tilde{\gamma}'} w,w\ran_{L^2}. 
\end{align*}
As we have seen, for any $0<\gamma\le\gamma_0$, it holds that 
\begin{align*}
a_\gamma - 2c_0^{-1}[\sigma'(\u)\u'a_\gamma' - \sigma(\u)a_\gamma''] \ge (1/2)a_\gamma.
\end{align*}
Collecting the previous estimates, we have 
for any $0<\gamma\le\gamma_0$
\begin{align*}
\lan a_{2\tilde{\gamma}'}\psi, \psi \ran_{L^2} + \lan a_{2\tilde{\gamma}'}\psi', \psi'\ran_{L^2}
\le 2c_0^{-2}\lan a_{2\tilde{\gamma}'}w,w\ran_{L^2}.
\end{align*}
We thus obtain by elementary calculations 
\begin{equation}\label{Lu-ine2} 
\|\psi(\xi)e^{\tilde{\gamma}' \xi} \|_{\mc H_1} \le C_0 \| w(\xi)e^{\gamma'\xi}\|_{L^2}, 
\end{equation}
where $C_0$ is a positive constant. 
Notice that $\psi''$ can be written as a linear combination of $\psi, \psi'$ and $w$ with bounded coefficients, since $\psi$ satisfies $L_{\overline{u}} \psi = w$. 
Therefore, it holds that 
\[ \|(\psi(\xi)e^{\tilde{\gamma}' \xi})''\|_{L^2} \le C_1 (\|\psi(\xi)e^{\tilde{\gamma}' \xi}\|_{\mc H_1} + \|w(\xi) e^{\tilde{\gamma}' \xi}\|_{L^2}) \]
for some constant $C_1 > 0$, which yields by combining \eqref{Lu-ine2} and the Sobolev inequality 
\[ \| \psi(\xi)e^{\tilde{\gamma}' \xi}\|_{\mc H_1} \le C_2 C_1(C_0 + 1) \| w(\xi)e^{\gamma'|\xi|}\|_{L^2} \]
if $C_2>0$ is defined as the Sobolev constant. 
Here, we have used $\tilde{\gamma}' \le \gamma'$. 
Applying $L_{\overline{u}} \psi = w$ again, we obtain 
\[ \sup_{\xi \in \R}\{|\psi (\xi)| + |\psi'(\xi)| + |\psi''(\xi)|\}e^{\tilde{\gamma}' \xi} \le \tilde{C} \|\psi(\xi)e^{\tilde{\gamma}'\xi}\|_{C^2} \le C \| w(\xi)e^{\gamma'|\xi|}\|_{L^2} \]
for some constants $C, \tilde{C}>0$. 
Performing the same argument for $a_{-2\tilde{\gamma}'}(\xi)=e^{-2\tilde{\gamma}'\xi}$, we can strengthen the previous estimate to \eqref{Lu-ine}.
\end{proof}

\section{Geometric properties}\label{secb}

In this section, we summarize geometric properties related to our problem. 
The following proposition lists derivatives of the distance function from surfaces. 

\begin{proposition}\label{prop:dist}
Let $\Gamma = \{\Gamma_t\}_{t \in [0,T]}$ be a family of oriented smooth hyper-surfaces with $\Gamma_t = \partial \Omega_t$ for some open $\Omega_t \subset \T^d$. 
Assume $\Gamma_t$ has a finite surface area for any $t \in [0,T]$. 
Denote by $n_t$ the outward unit normal vector of $\Gamma_t$, by $v_t$ the normal velocity of $\Gamma_t$, and by $h_t$ the mean curvature of $\Gamma_t$. 
Let $\kappa$ be the constant defined around \eqref{def-K}. 
Denote by $d(x,t)$ be a regularized version of the signed distance from $\Gamma_t$ satisfying \eqref{def-dist}. 
Then, it holds that 
\begin{align*}
&\nabla d(t,x) = n_t (\zeta(t, x)), \\
&\Delta d(t,x) = \sum_{k=1}^{d-1} \frac{-\kappa_{t,k}(\zeta(t,x))}{1 - \kappa_{t,k}(\zeta(t,x))d(t,x)}, \\
&\partial_t d(t,x) = -v_t(\zeta(t,x)), 
\end{align*}
for $(t,x)$ such that $|{\rm dist}(x, \Gamma_t)| \le \kappa$, where $\zeta(t,x) \in \Gamma_t$ be the unique point satisfying $|{\rm dist}(x, \Gamma_t)| = |x- \zeta(t,x)|$ and $\kappa_{t, 1}(\zeta(t,x)), \kappa_{t, 2}(\zeta(t,x)), \ldots, \kappa_{t, d-1}(\zeta(t,x))$ are the principal curvatures of $\Gamma_t$ at $\zeta(t,x)$ such that the curvatures are positive if $\Gamma_t$ is convex. 
\end{proposition}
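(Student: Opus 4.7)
The plan is to reduce all three identities to the tubular-neighborhood parametrization that is already implicit in the definition of $\kappa$. Concretely, for each $t$ the map $\Phi_t:(y,s)\mapsto y+s\,n_t(y)$ is a smooth diffeomorphism from $\Gamma_t\times(-\kappa,\kappa)$ onto $\{x:|\mathrm{dist}(x,\Gamma_t)|\le \kappa\}$; the existence and smoothness of the nearest-point projection $\zeta(t,x)$, together with the representation $x=\zeta(t,x)+d(t,x)\,n_t(\zeta(t,x))$, is exactly what is quoted from \cite[Section 14.6]{GT} around \eqref{def-K}. I would take this as the starting data and differentiate it three different ways.

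For the gradient formula, I would use that $x-\zeta(t,x)\perp T_{\zeta}\Gamma_t$ by the nearest-point characterization. Differentiating $|x-\zeta(t,x)|^2=d(t,x)^2$ in $x$ and observing that the columns of the Jacobian $\nabla_x\zeta$ lie in $T_\zeta\Gamma_t$, that term drops out in the inner product with $x-\zeta$, leaving $d\,\nabla d=x-\zeta=d\,n_t(\zeta)$. Hence $\nabla d(t,x)=n_t(\zeta(t,x))$, and in particular the eikonal identity $|\nabla d|\equiv 1$ holds on the tubular neighborhood.

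For the Laplacian, I would work in local orthonormal coordinates on $\Gamma_t$ that diagonalize the shape operator at a reference point $\zeta_0$, with principal curvatures $\kappa_{t,1},\dots,\kappa_{t,d-1}$. In the coordinates $(y,s)$ pulled back via $\Phi_t$ the metric is block diagonal with tangential block $\mathrm{diag}((1-s\kappa_{t,k}(y))^2)$ and normal entry $1$, so $\det D\Phi_t=\prod_k(1-s\kappa_{t,k}(y))$. Combining the identity $\nabla d=n_t(\zeta)$ with the computation $\Delta d=\mathrm{div}\,n_t(\zeta(t,\cdot))$ in these normal coordinates, or equivalently taking the logarithmic derivative of the Jacobian along the normal direction, produces the stated sum $\sum_k -\kappa_{t,k}(\zeta)/(1-\kappa_{t,k}(\zeta)\,d)$. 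For the time derivative I would instead fix $x$, differentiate $x=\zeta(t,x)+d(t,x)\,n_t(\zeta(t,x))$ in $t$, and pair with $n_t(\zeta)$: since $\partial_t(n_t\circ\zeta)$ is tangential (as $|n_t|\equiv 1$) and the normal component of $\partial_t\zeta(t,x)$ is by definition $v_t(\zeta)$, the tangential pieces collapse and one reads off $\partial_t d(t,x)=-v_t(\zeta(t,x))$.

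The main obstacle is not any individual calculation, each of which is classical, but the sign bookkeeping: orientation of $n_t$ (outward, pointing away from $\Omega_t$), the sign convention for the signed distance in \eqref{def-dist}, the sign convention for principal curvatures (positive on convex portions, as stated in the proposition), and the sign convention for the normal velocity $v_t$. I would fix all four explicitly at the outset and then cross-check that each formula has the sign the proposition asserts; beyond that the proof is essentially a brief application of \cite[Section 14.6]{GT}.
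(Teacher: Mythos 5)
Your proposal is correct and follows essentially the same route as the paper: the $\nabla d$ and $\Delta d$ identities are delegated to the tubular-neighborhood computation of \cite[Section 14.6]{GT}, and the time derivative is obtained by differentiating the relation $x=\zeta(t,x)+d(t,x)\,n_t(\zeta(t,x))$ (the paper differentiates the equivalent identity $|x-\zeta(t,x)|^2=d(t,x)^2$, which sidesteps the need to observe that $\partial_t(n_t\circ\zeta)$ is tangential, but the two manipulations are interchangeable). Your closing caution about sign conventions is well placed, since consistency of the curvature sign with \eqref{def-dist} and the outward orientation is the only delicate point here.
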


\begin{proof}
We refer to \cite[Section 14.6]{GT} for the equalities of $\nabla d$ and $\Delta d$. 
The smoothness of $d(t,x)$ and $\zeta(t,x)$ also can be proved by using a similar choice of the coordinate of $\T^d$ to it in \cite[Section 14.6]{GT}; thus we omit the proof of the smoothness. 
By taking the time derivative of $|x - \zeta(t,x)|^2 = d(t,x)^2$, we have 
\[ 2 \langle x- \zeta(t,x), - \partial_t \zeta (t,x) \rangle_{\T^d} = 2 d(t,x) \partial_t d(t, x). \]
Noticing $n_t = (x- \zeta(t,x))/d(t,x)$, it holds that 
\[ \partial_t d(t,x) = \left \langle \frac{x-\zeta(t,x)}{d(t,x)}, - \partial_t \zeta (t,x) \right \rangle_{\T^d} = -v_t(t,x), \]
which completes the proof. 
\end{proof}

In order to calculate the singular limit of functionals, we repeatedly use the following formulas. 

\begin{proposition}\label{prop:con-e-int}
Let $\Gamma = \{\Gamma_t\}_{t \in [0,T]}$ be a family of oriented smooth hyper-surfaces with $\Gamma_t = \partial \Omega_t$ for some open $\Omega_t \subset \T^d$. 
Assume $\Gamma_t$ has a finite surface area for any $t \in [0,T]$. 
Denote by $d(x,t)$ be a regularized version of the signed distance from $\Gamma_t$ satisfying \eqref{def-dist}. 
Let $\gamma'>0$ be an arbitrary positive constant. 
Then, the following statements hold: 
\begin{itemize}
\item[(1)] Let $\tilde{\mc R}_\e: [0,T] \times \T^d \times \R \to \R$ be a continuous function satisfying 
\begin{equation}\label{as-decay-tR} 
\varlimsup_{\e \to 0} \sup_{(t, x, \xi) \in [0,T] \times \T^d \times \R} e^{\gamma' |\xi|} |\tilde{\mc R}_\e(t, x, \xi)| =0 
\end{equation} 
Then, it holds that 
\begin{equation}\label{con-decay-tR1} 
\lim_{\e \to 0} \frac{1}{\e} \int_0^T \int_{\T^d} \tilde{\mc R}_\e (t,x, d(t,x)/\e) \; dxdt = 0. 
\end{equation}

\item[(2)] Let $A: [0,T] \times \T^d \times \R \to \R$ be a continuous function satisfying 
\[ \sup_{(t, x, \xi) \in [0,T] \times \T^d \times \R} e^{\gamma' |\xi|} |A(t, x, \xi)| < \infty \]
Then, it holds that 
\begin{equation}\label{con-decay-tR2} 
\lim_{\e \to 0} \frac{1}{\e} \int_0^T \int_{\T^d} A(t,x, d(t,x)/ \e) \; dxdt = \int_0^T \int_{\Gamma_t} \int_\R A(t,x, \xi) \; d\xi d\mathcal{H}^{d-1}(dx) dt. 
\end{equation}
\end{itemize}
\end{proposition}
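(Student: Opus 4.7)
The plan is to reduce both statements to a single change-of-variables argument on a tubular neighborhood of the moving surface, combined with dominated convergence. The exponential weight $e^{\gamma'|\xi|}$ forces the integrands to be concentrated near $\{d(t,\cdot)=0\}=\Gamma_t$, and once we change to normal coordinates the factor $1/\e$ is exactly the Jacobian one obtains when rescaling the normal variable $r$ by $\xi=r/\e$.

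First I would split $\T^d=A_\kappa(t)\cup A_\kappa(t)^c$, where $A_\kappa(t):=\{x\in\T^d:|\mathrm{dist}(x,\Gamma_t)|\le\kappa\}$ and $\kappa$ is the constant from \eqref{def-K}. On the complement $A_\kappa(t)^c$ we have $|d(t,x)/\e|\ge\kappa/\e$, so by the exponential weight assumption $|A(t,x,d/\e)|\le Ce^{-\gamma'\kappa/\e}$ (and similarly for $\tilde{\mc R}_\e$). Hence
\begin{equation*}
\frac{1}{\e}\int_0^T\!\!\int_{A_\kappa(t)^c}|A(t,x,d/\e)|\,dxdt\le\frac{C\,T\,|\T^d|}{\e}e^{-\gamma'\kappa/\e}\longrightarrow 0
\end{equation*}
as $\e\to 0$, and this term contributes nothing in either (1) or (2). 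The same estimate in (1), with $\tilde{\mc R}_\e$ replacing $A$, is even stronger because the prefactor also tends to zero.

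On $A_\kappa(t)$ I would use the tubular neighborhood diffeomorphism $\Phi_t:\Gamma_t\times(-\kappa,\kappa)\to A_\kappa(t)$ defined by $\Phi_t(y,r)=y+r\,n_t(y)$, whose Jacobian is $J(t,y,r)=\prod_{k=1}^{d-1}(1-r\kappa_{t,k}(y))$; by \eqref{def-K} this satisfies $J(t,y,r)\to 1$ uniformly as $r\to 0$ and is uniformly bounded above and away from zero. Using $d(t,\Phi_t(y,r))=r$ and then the rescaling $r=\e\xi$, the tubular part of the left-hand side of \eqref{con-decay-tR2} becomes
\begin{equation*}
\int_0^T\!\!\int_{\Gamma_t}\!\!\int_{-\kappa/\e}^{\kappa/\e}A\bigl(t,\Phi_t(y,\e\xi),\xi\bigr)\,J(t,y,\e\xi)\,d\xi\,d\mathcal H^{d-1}(y)\,dt,
\end{equation*}
and the analogous formula holds with $\tilde{\mc R}_\e$ in place of $A$.

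Finally I would pass to the limit by dominated convergence. The integrand is dominated by $C\,e^{-\gamma'|\xi|}\sup_{y,r}J(t,y,r)\in L^1([0,T]\times\Gamma_t\times\R)$ uniformly in $\e$, using the finite surface area of $\Gamma_t$. For (2), pointwise in $(t,y,\xi)$ we have $\Phi_t(y,\e\xi)\to y$ and $J(t,y,\e\xi)\to 1$ by continuity, and the integrand converges to $A(t,y,\xi)$; extending the inner integration from $(-\kappa/\e,\kappa/\e)$ to $\R$ via the dominating exponential costs nothing in the limit. This yields \eqref{con-decay-tR2}. For (1) the pointwise limit of the integrand is $0$ by the hypothesis \eqref{as-decay-tR}, so the same dominated convergence argument produces \eqref{con-decay-tR1}. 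The only mildly delicate step is justifying the tubular diffeomorphism and the boundedness of $J$ on $|r|\le\kappa$; both follow from \eqref{def-K} and the smoothness of $\Gamma_t$, as recalled in Proposition \ref{prop:dist}.
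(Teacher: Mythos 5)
Your proposal is correct and follows essentially the same route as the paper: split off the complement of the $\kappa$-tubular neighborhood (killed by the exponential weight), rewrite the tubular part in normal coordinates $(y,r)\mapsto y+rn_t(y)$ with the associated Jacobian — which the paper implements via the co-area formula plus the area formula for $\{d=s\}$ — rescale $r=\e\xi$, and conclude by domination (the paper uses the uniform bound $C_\e\to0$ for part (1) and dominated convergence for part (2), a cosmetic difference from your use of dominated convergence in both). The only nitpick is your claim that the Jacobian is bounded away from zero on $|r|\le\kappa$, which need not hold at $|r|=\kappa$ under \eqref{def-K}; only the upper bound is actually needed, so this does not affect the argument.
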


\begin{proof}
Let $\kappa$ be the constant defined around \eqref{def-K}. 
We first prove (1). 
By means of \eqref{as-decay-tR}, letting
\[ C_\e := \sup_{(t, x, \xi) \in [0,T] \times \T^d \times \R} e^{\gamma' |\xi|} |\tilde{\mc R}_\e(t, x, \xi)|, \]
we see that $C_\e \to 0$ as $\e \to 0$. 
We decompose the integral as 
\begin{align*}
&\; \frac{1}{\e} \int_0^T \int_{\T^d} \tilde{\mc R}_\e (t,x, d_\e) \; dxdt \\
=&\; \frac{1}{\e} \int_0^T \int_{\{x: |d(t,x)| \le \kappa \}} \tilde{\mc R}_\e (t,x, d_\e) \; dxdt + \frac{1}{\e} \int_0^T \int_{\{x: |d(t,x)| > \kappa\}} \tilde{\mc R}_\e (t,x, d_\e) \; dxdt =: I_1 + I_2,  
\end{align*}
where $d_\e = d(t,x)/ \e$. 
Due to \eqref{as-decay-tR}, we have by a simple calculation 
\begin{equation}\label{decay-tR1}
|I_2| \le \frac{T C_\e}{\e} e^{-\gamma' \kappa/ \e}, 
\end{equation}
which implies that $I_2 \to 0$ as $\e \to 0$. 
Note that Proposition \ref{prop:dist} yields $|\nabla d| = 1$ if $(t,x)$ satisfies $|d(t,x)| \le \kappa$. 
We thus apply the co-area formula (see \cite[Theorem 3.10]{EG} for example) to $I_1$ to obtain 
\begin{align*}
|I_1| \le&\; \frac{1}{\e} \int_0^T \int_{\{x: |d(t,x)| \le \kappa\}} |\tilde{\mc R}_\e (t,x, d_\e) | \cdot |\nabla d(t,x)| \; dxdt \\
=&\; \frac{1}{\e} \int_0^T \int_{-\kappa}^{\kappa} \int_{\{x: d(t,x) = s\}} |\tilde{\mc R}_\e (t,x, s/\e)| \; d\mathcal{H}^{d-1}(dx) ds dt \\
\le&\; \frac{C_\e}{\e} \int_0^T \int_{-\kappa}^{\kappa} e^{-\gamma' |s|/ \e} \mathcal{H}^{d-1}(\{x: d(t,x) = s\}) \; dsdt. 
\end{align*}
Since the surface $\{x: d(t,x) = s\}$ can be represented by 
\[ \{x \in \T^d: d(t,x) = s\} = \{y + n_t(y) s \in \T^d: y \in \Gamma_t\}, \]
the change of variables formula for integrations (cf.\ \cite[Theorem 3.9]{EG}) yields that 
\[ \mathcal{H}^{d-1}(\{x: d(t,x)=s\}) = \int_{\Gamma_t} |\det (\nabla_{\Gamma_t} {\rm Id}(y) + s \nabla_{\Gamma_t} n_t(y))| \; \mathcal{H}^{d-1}(dy), \]
where ${\rm Id}$ is the identity map satisfying ${\rm Id}(y) = y \in \T^d$ for $y \in \Gamma_t$ and $\nabla_{\Gamma_t}$ is the divergence operator on the surface $\Gamma_t$. 
Therefore, letting 
\[ M := \max_{t \in [0,T], y \in \Gamma_t, s \in [-\kappa, \kappa]}  |\det (\nabla_{\Gamma_t} {\rm Id}(y) + s \nabla_{\Gamma_t} n_t(y))| < \infty \]
we have 
\begin{align*} 
|I_1| \le&\; \frac{TMC_\e}{\e} \left(\max_{t \in [0,T]} \mathcal{H}^{d-1}(\Gamma_t)\right) \int_{-\kappa}^{\kappa} e^{-\gamma' |s|/\e} \; ds \\
=&\; TMC_\e \left(\max_{t \in [0,T]} \mathcal{H}^{d-1}(\Gamma_t)\right) \int_{-\kappa/\e}^{\kappa/\e} e^{-\gamma' |\tilde{s}|} \; d\tilde{s}. 
\end{align*}
We here have changed the variable $s/\e$ to $\tilde{s}$. 
We thus obtain $I_1 \to 0$ as $\e \to 0$ by means of the convergence of $C_\e$ to $0$, which implies \eqref{con-decay-tR1} by combining \eqref{decay-tR1}. 

We next prove \eqref{con-decay-tR2}. 
We decompose the integral as 
\begin{align*}
&\; \frac{1}{\e} \int_0^T \int_{\T^d} A (t,x, d_\e) \; dxdt \\
=&\; \frac{1}{\e} \int_0^T \int_{\{x: |d(t,x)| \le \kappa\}} A (t,x, d_\e) \; dxdt + \frac{1}{\e} \int_0^T \int_{\{x: |d(t,x)| > \kappa\}} A (t,x, d_\e) \; dxdt =: I_3 + I_4. 
\end{align*}
We can prove $I_4 \to 0$ as $\e \to 0$ by a similar argument for $I_2$. 
Apply the co-area formula, the change of variables formula, and change the variable $s/\e$ to $\tilde{s}$ to obtain 
\begin{align*}
I_3 =&\; \frac{1}{\e} \int_0^T \int_{-\kappa}^{\kappa} \int_{\{x: d(t,x) = s\}} A(t, x, s/\e) \; \mathcal{H}^{d-1}(dx) ds dt \\
=&\; \frac{1}{\e} \int_0^T \int_{-\kappa}^{\kappa} \int_{\Gamma_t} A(t, y+s n_t(y), s/\e) |\det (\nabla_{\Gamma_t} {\rm Id}(y) + s \nabla_{\Gamma_t} n_t(y))| \; \mathcal{H}^{d-1}(dy) ds dt \\
=&\; \int_0^T \int_{-\kappa/\e}^{\kappa/\e} \int_{\Gamma_t} A(t, y+\e\tilde{s} n_t(y), \tilde{s}) |\det (\nabla_{\Gamma_t} {\rm Id}(y) + \e \tilde{s} \nabla_{\Gamma_t} n_t(y))| \; \mathcal{H}^{d-1}(dy) d\tilde{s} dt. 
\end{align*}
Therefore, we have by the dominated convergence theorem 
\[ I_3 \to \int_0^T \int_\R \int_{\Gamma_t} A(t, y, \tilde{s}) \; \mathcal{H}^{d-1}(dy) d\tilde{s} dt \quad \text{as} \; \; \e \to 0, \]
which yields \eqref{con-decay-tR2} by means of the convergence of $I_4$. 
\end{proof}


\begin{thebibliography}{99}

\bibitem{BSS}{\sc G.\ Barles, H.\ M.\ Soner and P.\ E.\ Souganidis}, 
{\it Front propagation and phase field theory}, 
SIAM J. Control Optim., {\bf 31} (1993), 439--469.

\bibitem{BELLETTINI}{\sc G.\ Bellettini},
{\it Lecture notes on mean curvature flow, barriers and singular perturbations},
Lecture Notes. Scuola Normale Superiore di Pisa, Pisa,  2013.

\bibitem{BBP}{\sc L.\ Bertini, P.\ Butt\`{a} and A.\ Pisante}, 
{\it On large deviations of interface motions for statistical mechanics models}, 
Ann. Henri Poincar\'e, {\bf 20} (2019), 1785--1821.

\bibitem{BDGJL}{\sc L.\ Bertini, A.\ De Sole, A.\ Gabrielli, G.\ Jona-Lasinio and C.\ Landim}, 
{\it Macroscopic fluctuation theory}, 
Rev. Mod. Phys., {\bf 87} (2015), 593--636.

\bibitem{BFG}{\sc L.\ Bertini, A.\ Faggionato and A.\ Gabrielli},
{\it Large deviation principles for nongradient weakly asymmetric stochastic lattice gases},
Ann. Appl. Probab., {\bf 23} (2013), 1--65.

\bibitem{BONAVENTURA}{\sc L.\ Bonaventura}, 
{\it Interface dynamics in an interacting spin system}, 
Nonlinear Anal., {\bf 25} (1995), 799--819.

\bibitem{Ch}{\sc X.\ Chen}, 
{\it Generation and propagation of interfaces in reaction-diffusion equations}, 
J. Diff. Equations, {\bf 96} (1992), 116--141. 

\bibitem{DFL}{\sc A.\ De Masi, P.\ Ferrari and J.\ Lebowitz},
{\it Reaction diffusion equations for interacting particle systems},
J. Stat. Phys., {\bf 44} (1986), 589--644.

\bibitem{EFHPS}{\sc P.\ El Kettani, T.\ Funaki, D.\ Hilhorst, 
H.\ Park and S.\ Sethuraman},
{\it Mean curvature interface limit from Glauber$+$Zero-range interacting particles},
Comm. Math. Phys., {\bf 394} (2022), 1173--1223.

\bibitem{EVANS}{\sc L.\ C.\ Evans}, 
{\it Partial differential equations}, 
Graduate Series in Mathematics, vol. 19, American Mathematical Society, Second ed., 2010.

\bibitem{EG}{\sc L.\ C.\ Evans and R.\ F.\ Gariepy}, 
{\it Measure theory and fine properties of functions}, 
Studies in Advanced Math., CRC Press, Revised ed., 2015.

\bibitem{ESS}{\sc L.\ C.\ Evans, H.\ M.\ Soner and P.\ E.\ Souganidis}, 
{\it Phase transitions and generalized motion by mean curvature}, 
Comm. Pure Appl. Math., {\bf 45} (1992), 1097--1123. 

\bibitem{FL}{\sc W.\ G.\ Faris and G.\ Jona-Lasinio},
{\it Large fluctuations for a nonlinear heat equation with noise},
J. Phys. A, {\bf 15} (1982), no. 10, 3025--3055.

\bibitem{FHU}{\sc T.\ Funaki, K.\ Handa and K.\ Uchiyama},
{\it Hydrodynamic limit of one-dimensional exclusion processes with speed change},
Ann. Probab., {\bf 19} (1991), 245--265.

\bibitem{FvMST1}{\sc T.\ Funaki, P.\ van Meurs, S.\ Sethuraman and K.\ Tsunoda},
{\it Motion by mean curvature from Glauber-Kawasaki dynamics with speed change}, 
J. Stat. Phys., {\bf 190} (2023), Article number: 45.

\bibitem{FvMST2}{\sc T.\ Funaki, P.\ van Meurs, S.\ Sethuraman and K.\ Tsunoda},
{\it Constant-speed interface flow from unbalanced Glauber-Kawasaki dynamics}, 
arXiv:2210.03857.

\bibitem{FT}{\sc T.\ Funaki and K.\ Tsunoda},
{\it Motion by mean curvature from Glauber-Kawasaki dynamics},
J. Stat. Phys., {\bf 177} (2019), 183--208.

\bibitem{GT}{\sc D.\ Gilbarg and N.\ S.\ Trudinger}
{\it Elliptic partial differential equations of second order}, 
Springer-Verlag, Reprint of the 2nd ed., 2001.

\bibitem{HW}{\sc M.\ Hairer and H.\ Weber},
{\it Large deviations for white-noise driven, nonlinear stochastic PDEs in two and three dimensions},
Ann. Fac. Sci. Toulouse Math., (6) {\bf  24}  (2015), 55--92. 

\bibitem{Il}{\sc T.\ Ilmanen}, 
{\it Convergence of the Allen-Cahn equation to Brakke's motion by mean curvature}, 
J. Diff. Geom., {\bf 38} (1993), 417--461. 

\bibitem{JLV}{\sc G.\ Jona-Lasinio, C.\ Landim and M.E.\ Vares},
{\it Large deviations for a reaction diffusion model},
Probab. Theory Related Fields, {\bf  97}  (1993), 339--361.

\bibitem{KS}{\sc M.\ A.\ Katsoulakis and P.\ E.\ Souganidis},
{\it Interacting particle systems and generalized evolution of fronts},
Arch. Ration. Mech. Anal., {\bf 127}  (1994), 133--157.

\bibitem{KORV}{\sc R.\ Kohn, F.\ Otto, M.\ G.\ Reznikoff and E.\ Vanden-Eijnden},
{\it Action minimization and sharp-interface limits for the stochastic Allen-Cahn equation},
Comm. Pure Appl. Math., {\bf 60}  (2007), 393--438.

\bibitem{LT}{\sc C.\ Landim and K.\ Tsunoda},
{\it Hydrostatics and dynamical large deviations for a reaction-diffusion model},
Ann. Inst. H. Poincar\'e Probab. Statist., {\bf 54}  (2018), 51--74.

\bibitem{MR}{\sc L.\ Mugnai and M.\ R\"oger},
{\it The Allen-Cahn action functional in higher dimensions},
Interfaces Free Bound., {\bf 10}  (2008), 45--78.

\bibitem{So}{\sc H.\ M.\ Soner}, 
{\it Ginzburg-Landau equation and motion by mean curvature. I. Convergence}, 
J. Geom. Anal., {\bf 7} (1997), 437--475. 

\bibitem{SCHAUBECK}{\sc S.\ Schaubeck}, 
{\it Sharp interface limits for diffuse interface models}, 
PhD thesis, Univ. of Regensburg (2014), urn:nbn:de:bvb:355-epub-294622.

\end{thebibliography}
\end{document}